\documentclass[11pt]{article}
\usepackage[a4paper, left=2cm,right=2cm,top=2cm,bottom=2cm]{geometry}
\usepackage{authblk} 
\usepackage[T1]{fontenc}
\usepackage{mlmodern}
\usepackage{lipsum} 
\usepackage{graphicx}   
\usepackage{blindtext} 
\usepackage{amsmath,amssymb,fixmath}   
\usepackage{amsthm}
\usepackage{amsfonts}
\usepackage{empheq}
\usepackage[numbers]{natbib} 
 \usepackage{hyperref}
 \usepackage{cleveref}
\hypersetup{
	colorlinks=true,
	linkcolor={blue},
	citecolor={blue},
	urlcolor={blue},
	breaklinks=true,
	plainpages=true
}
\usepackage{nicefrac,xfrac}
\usepackage{bm}
\usepackage[normalem]{ulem}
\usepackage{tikz} 
\usetikzlibrary{positioning,arrows.meta}  
\usetikzlibrary{shapes.geometric}
\usepackage[dvipsnames]{xcolor}  
\usepackage{booktabs} 
\usepackage{multirow}
\usepackage{makecell}
\usepackage{url}
\usepackage{algorithm}
\usepackage{algpseudocode}
\graphicspath{{Pictures/}}
\usetikzlibrary{positioning}  

\definecolor{myblockfill}{RGB}{231,219,219}  
\usepackage{subcaption} 
\usepackage{pgfplots}
\pgfplotsset{compat=1.18}
\usepackage{caption}

\usetikzlibrary{calc}
\usepackage{nicefrac}

\theoremstyle{definition}

\newtheorem{theorem}{\textbf{Theorem}}[section]

\newtheorem{lemma}[theorem]{\textbf{Lemma}}

\newtheorem{proposition}{\textbf{Proposition}}
\newtheorem{remark}[theorem]{\textbf{Remark}}
\newtheorem{assumption}{\textbf{Assumption}}[section]

\newcommand{\spatialD}{\Omega}

\newcommand{\spDom}{\Omega} 
\newcommand{\stDom}{U} 

\newcommand{\xcomp}[1]{x_{#1}}
\newcommand{\xii}{\xcomp{i}}
\newcommand{\ucomp}[1]{u_{#1}}

\newcommand{\xvec}{\mvec{x}}

\newcommand{\uprime}{\uvec'}
\newcommand{\pprime}{p'}
\newcommand{\avec}{\mvec{a}}
\newcommand{\bvec}{\mvec{b}}
\newcommand{\fvec}{\mvec{f}}

\newcommand{\gvec}{\mvec{g}}

\newcommand{\uvec}{\mvec{u}}
\newcommand{\vvec}{\mvec{v}}
\newcommand{\wvec}{\mvec{w}}

\newcommand{\uvech}{\mvec{u}_h}
\newcommand{\vvech}{\mvec{v}_h}

\newcommand{\zerovec}{\mathbf{0}}
\newcommand{\ph}{p_h}

\newcommand{\resMom}{\mvec{R}_m}
\newcommand{\resCon}{\mvec{R}_c}
\newcommand{\resMomStrong}{\mvec{r}_m^h}
\newcommand{\resConStrong}{\mvec{r}_c^h}
\newcommand{\resMomStrongTilde}{\widetilde{\mvec{r}}_m^h}

\newcommand{\partialder}[2]{\frac{\partial #1}{\partial #2}}

\newcommand{\timederShort}[1]{\partial_t #1}

\newcommand{\taum}{\tau_m}
\newcommand{\tauc}{\tau_c}

\newcommand{\visco}{\nu}
\newcommand{\pder}[1]{\partialder{p}{\xii}}
\newcommand{\normalvec}{{\mvec{n}}}

\newcommand{\uvecN}{{\uvec}^n}

\newcommand{\pN}{{p}^n}

\newcommand{\fvecN}{\mvec{f}^n}
\newcommand{\gvecN}{\mvec{g}^n}
\newcommand{\hvecN}{\mvec{h}^n}

\newcommand{\uvect}{\tilde{{\uvec}}}
\newcommand{\uvectN}{\tilde{{\uvec}}^n}
\newcommand{\uvecthN}{\tilde{\mvec{u}}_h^n}

\newcommand{\uvecth}{\tilde{{\uvec}}_h}

\newcommand{\uprimet}{\tilde{{\uvec}}'}
\newcommand{\uvecSN}{\hat{{\uvec}}^n}
\newcommand{\uvecSh}{\hat{{\uvec}}_h}

\newcommand{\pstar}{{p}^{*}}

\newcommand{\phat}{\hat{p}}
\newcommand{\phatN}{\hat{p}^{\,n}}
\newcommand{\phath}{\hat{p}_h}
\newcommand{\phathN}{\hat{p}_h^{\,n}}

\newcommand{\spaceV}{\mvec{V}}
\newcommand{\spaceVd}{\mvec{V}^D}
\newcommand{\spaceVh}{{\spaceV}_h}
\newcommand{\spaceVhd}{\spaceVh^D}
\newcommand{\spaceQ}{Q}
\newcommand{\spaceQh}{{\spaceQ}_h}

\newcommand{\nsd}{d}

\newcommand{\mesh}{K^h}

\newcommand{\grad}{{{\nabla}}}
\newcommand{\divergence}{\grad\cdot}
\newcommand{\ddiv}[1]{#1\cdot\grad}
\newcommand{\udiv}{\uvec\cdot\grad}

\newcommand{\laplacian}{\Delta}

\newcommand{\mvec}[1]{{\bm{#1}}}
\newcommand{\mmat}[1]{\uuline{\bm{#1}}}

\newcommand{\intSpace}{\int_{\spDom}}

\newcommand{\ds}{\mathrm{d}s}
\newcommand{\dx}{\mathrm{d}x}
\newcommand{\dxv}{\mathrm{d}\xvec}

\newcommand{\inner}[2]{\left( #1, #2 \right)}
\newcommand{\innerB}[2]{\left( #1, #2 \right)_{\partial \Omega}}
\newcommand{\innerh}[2]{\left( #1, #2 \right)_h}

\newcommand{\norm}[1]{\left\| #1 \right\|}

\newcommand{\vertiii}[1]{{\left\vert\kern-0.25ex\left\vert\kern-0.25ex\left\vert #1 \right\vert\kern-0.25ex\right\vert\kern-0.25ex\right\vert}}

\newcommand{\oneOver}[1]{\frac{1}{#1}}

\newcommand{\halfnice}{\nicefrac{1}{2}}

\newcommand{\dt}{\Delta t}

\newcommand{\cinv}{C_{I}}

\newcommand{\colref}[2]{\hyperref[#2]{#1~\ref*{#2}}}
\newcommand{\secref}[1]{\colref{Section}{#1}}
\newcommand{\figref}[1]{\colref{Figure}{#1}}
\newcommand{\tabref}[1]{\colref{Table}{#1}}

\providecommand{\phat}{\hat{p}}                  
\providecommand{\Nop}[1]{\mathcal{N}(#1)}        

\newcommand{\logLogSlopeTriangle}[5]
{
	
	\pgfplotsextra
	{
		\pgfkeysgetvalue{/pgfplots/xmin}{\xmin}
		\pgfkeysgetvalue{/pgfplots/xmax}{\xmax}
		\pgfkeysgetvalue{/pgfplots/ymin}{\ymin}
		\pgfkeysgetvalue{/pgfplots/ymax}{\ymax}
		
		\pgfmathsetmacro{\xArel}{#1}
		\pgfmathsetmacro{\yArel}{#3}
		\pgfmathsetmacro{\xBrel}{#1-#2}
		\pgfmathsetmacro{\yBrel}{\yArel}
		\pgfmathsetmacro{\xCrel}{\xArel}
		
		\pgfmathsetmacro{\lnxB}{\xmin*(1-(#1-#2))+\xmax*(#1-#2)} 
		\pgfmathsetmacro{\lnxA}{\xmin*(1-#1)+\xmax*#1} 
		\pgfmathsetmacro{\lnyA}{\ymin*(1-#3)+\ymax*#3} 
		\pgfmathsetmacro{\lnyC}{\lnyA+#4*(\lnxA-\lnxB)}
		\pgfmathsetmacro{\yCrel}{\lnyC-\ymin)/(\ymax-\ymin)} 
		
		\coordinate (A) at (rel axis cs:\xArel,\yArel);
		\coordinate (B) at (rel axis cs:\xBrel,\yBrel);
		\coordinate (C) at (rel axis cs:\xCrel,\yCrel);
		
		\draw[#5]   (A)-- node[pos=0.5,anchor=north] {1}
		(B)-- 
		(C)-- node[pos=0.5,anchor=west] {#4}
		cycle;
	}
}

\newcommand{\stab}{\text{s}}

\newcommand{\AlgStage}[1]{%
	\Statex
	\State \hspace*{-\algorithmicindent}%
	\textbf{\normalsize #1}%
}
\usepgfplotslibrary{fillbetween}
\usepackage{subcaption}
\usepackage{overpic}  
\title{\textbf{A Helmholtz-Leray projection method with variational multiscale stabilization for the Navier-Stokes equations}}

\author[1,*,$\dagger$]{Biswajit Khara}
\author[1,*]{Suresh Murugaiyan}
\author[3]{Makrand Khanwale}
\author[1,2,$\dagger$]{Baskar Ganapathysubramanian}

\affil[1]{Translational AI Center, Iowa State University, Ames, IA, 50011 USA}
\affil[2]{Department of Mechanical Engineering, Iowa State University, Ames, IA, 50011 USA}
\affil[3]{Technical University of Munich, School of Engineering and Design, Professorship for Computational Fluid Dynamics, Boltzmannstr. 15, Garching, 85748, Germany}
\affil[*]{These authors contributed equally to this work.}
\affil[$\dagger$]{Corresponding authors: \href{mailto:bkhara@iastate.edu}{bkhara@iastate.edu}; \href{mailto:baskarg@iastate.edu}{baskarg@iastate.edu}}

\date{}
\begin{document}

\maketitle

\begin{abstract}

The Galerkin finite element formulation of the incompressible Navier-Stokes equations presents two principal challenges: maintaining stable velocity-pressure coupling and controlling instability in advection-dominated regimes. Moreover, the monolithic formulation produces a coupled nonlinear saddle-point problem. In this work, we present a residual-based variational multiscale (VMS) stabilization of an incremental Helmholtz-Leray projection method that replaces this coupled saddle-point problem with a nonlinear velocity predictor, a pressure Poisson equation, and a velocity projection. The multiscale decomposition is applied only to the predicted velocity; neither the pressure nor the corrected, weakly divergence-free velocity is decomposed into coarse and fine scales. The modeled velocity fine scale contributes consistently to all three subproblems, introducing SUPG-like stabilization in the momentum predictor and a PSPG-like residual contribution in the pressure Poisson equation. We provide a formal error decomposition that separates the BDF2 time-discretization, projection-splitting, and spatial-VMS errors and, under stated stability and spatial-approximation assumptions, yields a combined velocity error estimate with second-order temporal accuracy. Numerical results for manufactured solutions, lid-driven cavity flow, flow past a cylinder, and the Taylor-Green vortex agree closely with established reference data. Comparisons with monolithic VMS indicate that omitting the pressure fine scale reduces drag overprediction and excess modeled dissipation, at the cost of increased divergence error. In the Taylor-Green tests, the projection formulation also reduces the average solution time per step by factors ranging from approximately $1.3\times$ to $2.7\times$ under identical solver settings.
\end{abstract}

\section{Introduction}\label{sec:intro}

It is well known that a Galerkin formulation of the Navier-Stokes equations (NSE) can lose stability primarily in two ways: (i) violation of the \emph{inf-sup} condition, and (ii) a dominant advective term in the momentum equations \cite{babuvska1973finite, brezzi1974existence, donea2003finite, brezzi2012mixed, john2016finite}. The first is associated with the saddle-point structure of the velocity-pressure formulation: the pressure acts as a Lagrange multiplier enforcing incompressibility, and stable discretizations must satisfy the Ladyzhenskaya-Babu\v{s}ka-Brezzi (LBB) condition or the inf-sup condition. This requirement excludes convenient equal-order approximation of velocity and pressure unless additional stabilization is introduced. The second difficulty arises from the advective term, which can make the system unstable at moderate to high Reynolds numbers if the mesh is not resolved enough to capture the small features of the flow. This leads to spurious oscillations and loss of robustness in the standard Galerkin method.

Stabilized finite element methods address these difficulties by augmenting the Galerkin formulation with additional terms. A large class of such methods has been developed, including streamline-upwind/Petrov-Galerkin (SUPG) \cite{brooks1982streamline}, pressure-stabilized/Petrov-Galerkin (PSPG) \cite{hughes1986new}, Galerkin/least-squares (GLS) \cite{hughes1989new}, the Douglas-Wang method \cite{douglas1989absolutely}, subgrid-scales \cite{codina2000stabilization}, among others. In this work, our focus will be on the variational multiscale method (VMS) \cite{hughes1995multiscale}. As the name suggests, VMS formulates the trial and test functions as direct sums of coarse and fine-scale quantities. The coarse-scale quantities are represented directly by the finite element space. The fine scales are unresolved by the mesh, but their \emph{effect} on the NSE is still incorporated in the discrete equations by \emph{modeling} the fine scales using the residuals of the coarse scales. Most importantly, the VMS methodology results in both advective and pressure stability in the NSE \cite{codina2002stabilized, bazilevs2007variational}. Finally, due to its multiscale interpretation of stabilization, VMS can also be considered as a turbulence model \cite{hughes2000large}.

Many VMS formulations are written and solved in a fully coupled, monolithic form, in which the velocity and pressure unknowns are solved simultaneously \cite{john2005finite, bazilevs2007variational}. Such formulations lead to nonlinear algebraic systems with a saddle-point structure that are more expensive to solve. This motivates the development of splitting strategies that retain the stabilizing advantages of VMS while reducing the complexity of the monolithic velocity-pressure formulation. To this end, we build on the Chorin-Temam-type splitting \cite{chorin1968numerical,temam1969sur} of the NSE, which transforms the saddle-point system into a series of elliptic problems. The Chorin-Temam splitting relies on the Helmholtz-Leray (HL) projection \cite{foias2001navier,guermond2006overview} to enforce the incompressibility constraint, which is why these schemes are also known as ``projection schemes.'' In a general projection scheme, the momentum equation is solved without the continuity constraint to obtain an intermediate velocity, which is then used to calculate the pressure. Finally, a divergence-free velocity is obtained using Helmholtz decomposition and the associated Leray projection.

The original scheme of Chorin uses a first-order time discretization and drops the pressure from the momentum predictor completely, which results in a $O(\dt)$ error in velocity and a $O(\dt^{\halfnice})$ error in pressure (where $\dt$ is the stepsize). But since then, projection methods have evolved into a broad family of fractional-step schemes that differ chiefly in how the pressure is treated in the momentum predictor. Prominent among these are the \textit{incremental pressure correction} methods \cite{goda1979multistep, vankan1986second, timmermans1996approximate}. These schemes use the pressure from the previous step in the momentum predictor, and then compute a correction to the pressure using the predicted velocity. These schemes generally improve the pressure accuracy as compared to that obtained in the original Chorin-Temam splitting. Systematic analyses and classifications of these variants are surveyed in \cite{guermond2006overview, quarteroni2000factorization}.

In this work, we combine these two prominent ideas, namely, projection scheme and multiscale stabilization, to obtain a more efficient method. We first apply the projection splitting to the time-discrete NSE, and then stabilize the split system of equations using VMS. The main contributions of this paper are therefore threefold. First, we derive a VMS-stabilized incremental projection method in which the fine scales of the predicted velocity, modeled through the residual-based multiscale framework, supply both advective and pressure stability, thereby enabling equal-order velocity-pressure interpolation without the need to satisfy the inf-sup condition. Second, we carry out a \emph{formal} error analysis of the resulting scheme and provide a systematic comparison against the monolithic VMS formulation, clarifying the accuracy and stability implications of the splitting. Third, we assess the method on a set of benchmark problems, demonstrating its accuracy, its robustness at moderate-to-high Reynolds numbers, and its computational advantages over the fully coupled formulation. The benchmark problems include the lid-driven cavity flow (2D), the flow past a cylinder (2D) and the Taylor-Green vortex (3D).


The remainder of this paper is organized as follows. In \secref{sec:formulations}, we first review the preliminaries (\secref{sec:implicit-galerkin-main} and \secref{sec:vms-stabilization-overview}), and then present the VMS-stabilized Helmholtz-Leray projection method for the incompressible NSE (\secref{sec:vms-projection}). In \secref{sec:analysis}, we provide a formal error analysis of the approach. Next, in \secref{sec:results}, we provide a series of numerical examples, and finally draw our conclusions in \secref{sec:conclusions}.

\section{Formulations}\label{sec:formulations}
In this section, we give a brief overview of the fully implicit VMS formulation of the NSE, and then present the proposed VMS-stabilized projection method. We begin with the notations and terminologies.

\subsection{Notation}\label{sec:notation}
\begin{table}[h]
	\centering
	\caption{Notation for the velocity fields used in the monolithic and projection formulations. All quantities are at time-step $n$.}
	\label{tab:velocity-notation}
	\begin{tabular}{r|l|l|l}
		\toprule[2pt]
		Symbol & Description & Note & Formulation \\
		\midrule
		$\uvec$
		& Exact solution of the NSE
		& --
		& -- \\
		
		\midrule
		
		$\uvecN$
		& Total velocity solution
		& \multirow{4}{*}{$
			\begin{aligned}
				\uvecN & = \uvec_c + \uvec_f \\
				& \approx \uvech + \uprime
			\end{aligned}
			$}
		& \multirow{4}{*}{VMS in Monolithic} \\
		
		$\uvec_c$, $\uvec_f$
		& Coarse and fine-scale components of $\uvecN$
		&
		& \\
		
		$\uvech$ 
		& Coarse-scale \emph{approximation}
		&
		& \\
		
		$\uprime$ 
		& Fine-scale \emph{approximation}
		&
		& \\
		
		\midrule
		
		$\uvectN$
		& Total predicted velocity
		& \multirow{5}{*}{$
			\begin{aligned}
				\uvectN & = \uvect_c + \uvect_f \\
				& \approx \uvect + \uprimet
			\end{aligned}
			$}
		& \multirow{5}{*}{VMS in HL projection} \\
		
		$\uvect_c$, $\uvect_f$
		& Coarse and fine-scale components of $\uvectN$
		&
		& \\
		
		$\uvecth$ 
		& Coarse-scale \emph{approximation}
		&
		& \\
		
		$\uprimet$ 
		& Fine-scale \emph{approximation}
		&
		& \\
		
		$\uvecSN$, $\uvecSh$
		& Corrected velocity after the projection step
		&
		& \\
		\bottomrule[2pt]
	\end{tabular}
\end{table}
In the next subsection and thereafter, we introduce several velocity fields associated with two stabilized formulations of the NSE. The notation is summarized in \tabref{tab:velocity-notation} for quick reference, and each quantity is explained when it is first encountered in the text.

Among other frequently used notations, we denote vector-valued quantities by boldface letters. For example, $\xvec$ given by
\begin{align*}
	\xvec = {\left[ \xcomp{1}, \xcomp{2}, \ldots, \xcomp{d} \right]}^\top =
	\begin{bmatrix}
		\xcomp{1} \\
		\xcomp{2} \\
		\vdots \\
		\xcomp{d}
	\end{bmatrix}
\end{align*}
denotes the spatial coordinates where each $\xcomp{i}$, $i = 1, 2,\ldots, d$ are scalars. Similarly,
$\uvec(t,\xvec) = {\left[ u_1, u_2, \ldots, u_n \right]}^\top $ denotes a vector-valued \emph{function} where each ${\{\ucomp{i}\}}_{i=1}^{n}$ is a function of both time and space, i.e., $\ucomp{i} = \ucomp{i}(t,\xvec)$.

Given two scalar-valued functions $a(\xvec)$ and $b(\xvec)$ on a domain $\spDom \subset \mathbb{R}^d$, we denote the inner-product between them as
\begin{align*}
	\inner{a}{b} := \intSpace a(\xvec)\ b(\xvec) \mbox{d}\xvec.
\end{align*}
For two \emph{vector-valued} functions $\avec(\xvec)$ and $\bvec(\xvec)$, we denote the inner-product in a similar manner as
\begin{align*}
	\inner{\avec}{\bvec} := \intSpace \avec(\xvec) \cdot \bvec(\xvec) \mbox{d}\xvec =\intSpace {\avec(\xvec)}^\top\bvec(\xvec) \mbox{d}\xvec = \intSpace \left[ \sum_{i = 1}^{n} a_i(\xvec)\ b_i(\xvec) \right]\mbox{d}\xvec = \sum_{i = 1}^{n} \inner{a_i}{b_i}.
\end{align*}

For two vector-valued functions $\avec$ and $\bvec$, we also define the outer product $\bm{C}$ as
\begin{align*}
	\bm{C} = \avec \otimes \bvec = \avec \bvec^{\top}.
\end{align*}
Here, $\bm{C}$ is a second order tensor whose $(i,j)$-th component is given as
\begin{align*}
	C_{ij} = a_i b_j.
\end{align*}
For matrices, we will use a double underbar, e.g., $\mmat{C}$.

\subsection{Implicit Galerkin formulation of the Navier-Stokes equations}
\label{sec:implicit-galerkin-main}

\begin{figure}[t]
	\centering
	\begin{tikzpicture}[line cap=round, line join=round]
		
		\coordinate (P1) at (-2.2,  0.0);   
		\coordinate (P2) at ( 1.8, -0.3);   
		
		\fill[gray!12]
		(P1)
		.. controls (-2.0, -0.8) and (-0.8, -2.0) .. ( 0.2, -1.9)
		.. controls ( 1.2, -1.8) and ( 1.0, -0.7) .. (P2)
		.. controls ( 2.6,  0.1) and ( 1.5,  1.8) .. ( 0.0,  2.2)
		.. controls (-1.5,  2.6) and (-2.4,  0.8) .. (P1);
		
		\draw[very thick]
		(P1)
		.. controls (-2.0, -0.8) and (-0.8, -2.0) .. ( 0.2, -1.9)
		.. controls ( 1.2, -1.8) and ( 1.0, -0.7) .. (P2);
		
		\draw[very thick, dash pattern=on 7pt off 3.5pt]
		(P2)
		.. controls ( 2.6,  0.1) and ( 1.5,  1.8) .. ( 0.0,  2.2)
		.. controls (-1.5,  2.6) and (-2.4,  0.8) .. (P1);
		
		\fill (P1) circle (2.5pt);
		\fill (P2) circle (2.5pt);
		
		\node[font=\large] at ( 0.0,  0.3) {$\Omega$};
		\node[font=\large] at ( 0.0, -2.25) {$\Gamma_D$};
		\node[font=\large] at (-1.5,  2.8) {$\Gamma_N$};
		
		\coordinate (NP) at (0.0, 2.2);
		\draw[-{Stealth[length=9pt, width=5.5pt]}, thick]
		(NP) -- ++(0.20, 0.75);
		\node[font=\normalsize] at ($(NP)+(0.46, 0.82)$) {$\hat{n}$};
		
	\end{tikzpicture}
	\caption{Sketch of a general domain with different conditions applied at different boundaries. Neumann conditions are prescribed on $\Gamma_N$, and Dirichlet conditions on the velocities are prescribed on $\Gamma_{D}$.}\label{fig:schematic}
\end{figure}

Suppose $\spDom$ is a spatial domain (see \figref{fig:schematic}). The incompressible Navier-Stokes equations in non-dimensional form on $\spDom$ are
\begin{subequations}\label{eq:nse}
	\begin{align}
		\label{eq:nse-momentum}
		\timederShort{\uvec} + (\udiv)\uvec - \visco\laplacian\uvec + \grad p - \fvec &= \mathbf{0}, \\
		\label{eq:nse-continuity}
		\divergence\uvec &= 0,
	\end{align}
\end{subequations}
along with the boundary conditions
\begin{subequations}\label{eq:nse-bc}
	\begin{align}
		\label{eq:nse-bc-d}
		\uvec &= \uvec_{D}\ \text{on}\ \Gamma_{D},\\
		\label{eq:nse-bc-n}
		\normalvec\cdot(-p\mvec{I}+\visco\grad\uvec) &= \mvec{h} \ \text{on}\ \Gamma_{N},
	\end{align}
\end{subequations}
and initial conditions
\begin{align}
	\label{eq:nse-ic}
	\uvec(t = 0, \cdot) = \uvec_0(\cdot).
\end{align}
Here, $\uvec$, $p$ and $\fvec$ are the velocity, pressure, and forcing, respectively. The parameter $\visco$ is the dimensionless viscosity ($\visco=Re^{-1}$). For unique solvability of equations~\eqref{eq:nse}--\eqref{eq:nse-ic}, we assume $\Gamma_{D}\neq\emptyset$.

\subsubsection{Discrete in time formulation}
\label{sec:semi-disc}

To numerically solve~\eqref{eq:nse}--\eqref{eq:nse-ic}, we must prescribe the temporal and spatial discretization schemes. We employ implicit time stepping (using backward difference formula (BDF) methods) and a continuous Galerkin finite element method in space. An $r$-th order BDF approximation of $\partial\uvec/\partial t$ at time step $n$ is
\begin{align}
	\partialder{\uvec}{t}\Big|_{n}
	&= \oneOver{\dt}\beta_0 \uvecN  + \oneOver{\dt}\sum_{i = 1}^{r} \beta_{-i}\uvec^{n-i} \notag \\
	&= \sigma \uvec^{n} + \oneOver{\dt}\sum_{i = 1}^{r} \beta_{-i}\uvec^{n-i},
\end{align}
where $\beta_0,\ {\{\beta_{-i}\}}_{i=1}^{r}$ are the standard coefficients of the BDF-$r$ scheme; and $\sigma=\nicefrac{\beta_0}{\dt}$. In a time-marching setting, $\uvec^n$ is unknown, while $\uvec^{n-1},\ldots,\uvec^{n-r}$ are known. The BDF-$r$ discretization of~\eqref{eq:nse} is given as
\begin{subequations}\label{eq:nse-bdf}
	\begin{align}
		\sigma \uvec^{n} + (\ddiv{\uvec^n})\uvec^n - \visco\laplacian\uvec^n + \grad p^n - \gvec^n &= \mathbf{0}, \label{eq:nse-bdf-mom} \\
		\divergence\uvec^n &= 0, \label{eq:nse-bdf-cont}
	\end{align}
\end{subequations}
where
\begin{align}
	\gvec^n := \fvec^n - \oneOver{\dt}\sum_{i=1}^{r}\beta_{-i}\uvec^{n-i}.
\end{align}

\subsubsection{Galerkin formulation}
\label{sec:implicit-galerkin-formulation}
We now derive a Galerkin formulation of~\eqref{eq:nse-bdf}. Define the trial and test spaces
\begin{subequations}\label{eq:fem-subspaces-modified}
	\begin{align}
		\spaceVd &:= \left\{ \mvec{v} \in \mvec{H}^1(\stDom): \vvec = \uvec_{D} \ \text{on}\ \Gamma_{D} \right\}, \\
		\spaceV &:= \left\{ \mvec{v} \in \mvec{H}^1(\stDom): \mvec{v} = \zerovec \text{ on } \Gamma_{D}\right\}, \\
		\spaceQ &:= {H}^1(\stDom).
	\end{align}
\end{subequations}
We seek $(\uvec^n,p^n)\in \spaceVd\times\spaceQ$ such that, for all $(\vvec,q)\in \spaceV\times\spaceQ$,
\begin{align}
	\intSpace \vvec\cdot \left[ \sigma\uvec^n + (\ddiv{\uvec^n})\uvec^n - \visco\laplacian\uvec^n + \grad p^n - \gvec^n \right] \dxv
	+ \intSpace q \ \divergence\uvec^n \ \dxv = 0.
\end{align}
Performing an integration-by-parts on the ``stress'' terms, i.e., the Laplacian $\laplacian \uvecN$ and the pressure gradient $\grad \pN$ as follows:
\begin{align}
	\intSpace \vvec \cdot \left[- \visco\laplacian\uvecN + \grad \pN\right] \dxv&= \intSpace \visco \grad \vvec : \grad \uvecN \dxv - \intSpace (\divergence\vvec)\ \pN \ \dxv - \int_{\partial \spatialD} \vvec \cdot \left(\normalvec \cdot \left[-\pN\mvec{I} + \visco\grad\uvecN\right]\right)\dxv \\
	&= \intSpace \visco \grad \vvec : \grad \uvecN \dxv - \intSpace (\divergence\vvec)\ \pN \ \dxv - \int_{\Gamma_{N}} \vvec \cdot \hvecN\ \dxv.
\end{align}
Collating all terms, and using the inner product notation described in \secref{sec:notation}, the Galerkin formulation of~\eqref{eq:nse-bdf} is to find $(\uvecN,\pN)\in\spaceVd\times\spaceQ$ such that
\begin{subequations}
	\label{eq:nse-bdf-galerkin}
	\begin{empheq}[left=\empheqlbrace]{alignat=2}
		&\resMom(\uvecN,\pN;\vvec) &&= \zerovec \quad \forall \vvec \in \spaceV, \\
		&\resCon(\uvecN;q)        &&= 0        \quad \forall q \in \spaceQ,
	\end{empheq}
\end{subequations}
where $\resMom$ and $\resCon$ are the weak momentum and continuity residuals defined as
\begin{subequations}
	\label{eq:coupled-weak-residuals}
	\begin{align}
		\resMom(\uvecN,\pN;\vvec)
		&:=
		\inner{\sigma\uvecN}{\vvec}
		+\inner{(\uvecN\cdot\grad)\uvecN}{\vvec}
		+\visco\inner{\grad\uvecN}{\grad\vvec}
		-\inner{\pN}{\divergence\vvec}
		\nonumber\\
		&\qquad
		-\inner{\gvecN}{\vvec}
		-\inner{\hvecN}{\vvec}_{\Gamma_N},
		\\
		\resCon(\uvecN;q)
		&:=
		\inner{\divergence\uvecN}{q}.
	\end{align}
\end{subequations}
$\resMom$ is nonlinear in $\uvecN$, whereas $\resCon$ is linear. 

\paragraph{Matrix structure of the coupled monolithic problem.}
The velocity $\uvecN$ and the pressure $\pN$ are coupled in the Galerkin formulation~\eqref{eq:nse-bdf-galerkin}, just like in the original NSE \eqref{eq:nse}.
To make the coupled structure explicit, we first write \eqref{eq:nse-bdf-galerkin} in its nonlinear algebraic form. Suppose
\begin{equation}
	\mvec{U} = [\uvec_1, \ldots, \uvec_{n_b}]^\top,
	\qquad \text{and}\qquad
	\mvec{P} = [p_1, \ldots, p_{n_b}]^\top
\end{equation}
are the nodal unknowns of $\uvecN$ and $\pN$ respectively, and $n_b$ denotes the number of basis functions in the finite element mesh. The nonlinear algebraic system arising from the spatial discretization of \eqref{eq:nse-bdf-galerkin} can then be written as
\begin{subequations}
	\label{eq:discrete-system-coupled-nonlinear}
	\begin{empheq}[left=\empheqlbrace]{alignat=3}
		&\mvec{N}(\mvec{U}) &{}+{}& \mmat{G}\mvec{P} &{}={}& \zerovec, \label{eq:discrete-system-coupled-nonlinear-mom} \\
		&\mmat{D}\mvec{U}  &     &                  &{}={}& \zerovec. \label{eq:discrete-system-coupled-nonlinear-con}
	\end{empheq}
\end{subequations}
Here, $\mvec{N}$ is a nonlinear (vector) function of $\mvec{U}$. The matrix $\mmat{G}$ is the discrete pressure-gradient operator, and $\mmat{D}$ is the discrete divergence operator.

A standard approach for solving~\eqref{eq:nse-bdf-galerkin} is Newton's method. The Newton system at the $k^{\textrm{th}}$-step ($k \geq 0$) for the increments $(\delta\mvec{U},\delta \mvec{P})$ can be written as
\begin{align}
	\label{eq:discrete-system-coupled-newton}
	\begin{bmatrix}
		\mmat{K}(\mvec{U}_{k}) & \mmat{G} \\
		\mmat{D} & \mmat{0}
	\end{bmatrix}
	\begin{bmatrix}
		\delta\mvec{U} \\
		\delta\mvec{P}
	\end{bmatrix}
	=
	-
	\begin{bmatrix}
		\mvec{R}_{M,k} \\
		\mvec{R}_{C,k}
	\end{bmatrix}.
\end{align}
Here, $\mmat{K} := \partialder{\mvec{N}}{\mvec{U}}$ is the tangent stiffness matrix acting on $\mvec{U}$. $\mmat{K}(\mvec{U}_{k})$ contains the transient mass contribution, the linearized advective contribution, and the viscous contribution. On the right hand side, $\mvec{R}_{M,k} := \mvec{N}(\mvec{U}_{k}) + \mmat{G}\mvec{P}_{k}$ and $ \mvec{R}_{C,k} := \mmat{D}\mvec{U}_{k} $ are the discrete momentum and continuity residuals evaluated at the $k^{\textrm{th}}$ iterates.
Once \eqref{eq:discrete-system-coupled-newton} is solved for the increments, we then have the next iterates
\begin{subequations}
	\label{eq:coupled-newton-update-weak}
	\begin{align}
		\mvec{U}_{k+1}^n &= \mvec{U}_{k}^n + \delta\mvec{U}, \\
		\mvec{P}_{k+1}^n &= \mvec{P}_{k}^n + \delta \mvec{P}.
	\end{align}
\end{subequations}
Both \eqref{eq:discrete-system-coupled-nonlinear} or \eqref{eq:discrete-system-coupled-newton} have the structure of a saddle-point problem with a zero diagonal block, which is typical of a Galerkin formulation of the NSE without any stabilization.


\subsubsection{Lack of stability of the continuous Galerkin method}
\label{sec:lack-of-stability}

Equation \eqref{eq:nse-bdf-galerkin} represents the weak form of the time-discrete incompressible Navier-Stokes equations. However, a discretization of \eqref{eq:nse-bdf-galerkin} using standard $C^0(\Omega)$ finite element basis functions is not automatically stable, and runs into two main challenges. The first is associated with the saddle-point character of the coupled velocity-pressure formulation. And the second is associated with the advective term in the momentum equation.

\paragraph{The saddle-point problem.}
The first issue arises from the incompressibility constraint and resulting saddle-point structure in \eqref{eq:discrete-system-coupled-nonlinear-con} and \eqref{eq:discrete-system-coupled-newton}. For such a system, the discrete velocity and pressure spaces cannot be chosen independently. If $\spaceV_h\subset\spaceV$ and $\spaceQ_h\subset\spaceQ$ denote the discrete velocity and pressure spaces, then the pair $(\spaceV_h,\spaceQ_h)$ must satisfy a discrete inf-sup condition of the form
\begin{align}
	\label{eq:discrete-infsup-condition}
	\inf_{q_h\in\spaceQh}
	\sup_{\vvec_h\in\spaceVh}
	\frac{\inner{\divergence\vvec_h}{q_h}}
	{\|\vvec_h\|_{\spaceVh}\|q_h\|_{\spaceQh}}
	\geq \beta_h > 0,	
\end{align}
for some $\beta_h \in \mathbb{R}$. This condition ensures that the discrete pressure is sufficiently controlled by the discrete divergence operator. If this compatibility condition is violated, the pressure approximation may develop nonphysical oscillations or checkerboard modes, and stability of the formulation cannot be guaranteed~\cite{babuvska1973finite,brezzi1974existence,john2016finite}.

A pair of finite element spaces that satisfies \eqref{eq:discrete-infsup-condition} is called an \emph{inf-sup-stable} or \emph{LBB-stable} pair~\cite{babuvska1973finite,brezzi1974existence,john2016finite}. Examples of such pairs include the Taylor--Hood pairs ($[P_m]^\nsd/P_{m-1}$ and $[Q_m]^\nsd/Q_{m-1}$, $m \geq 2$)~\cite{taylor1973numerical}, the mini element ($[P_1 \oplus B]^\nsd/P_1$)~\cite{arnold1984stable}, the Crouzeix--Raviart pair ($[P_1^{\mathrm{CR}}]^\nsd/P_0$)~\cite{crouzeix1973conforming}, and the Scott--Vogelius pairs ($[P_m]^\nsd/P_{m-1}^{\mathrm{disc}}$, for sufficiently large \(m\) under suitable assumptions on the mesh topology)~\cite{scott1985norm}.

Standard equal-order pairs, such as $[P_1]^\nsd/P_1$ or $[Q_1]^\nsd/Q_1$, are not inf-sup stable, and require additional stabilization such as the pressure stabilized Petrov Galerkin method  (PSPG)~\cite{hughes1986new, douglas1989absolutely, tezduyar1992incompressible}, the Galerkin/least-squares (GLS) method~\cite{hughes1989new}, subgrid-scale methods~\cite{codina2000stabilization} and the variational multiscale (VMS) method~\cite{hughes1995multiscale}, among others. In these stabilized schemes, the zero diagonal block of the original problem is regularized with a negative definite block matrix. In this work, we focus on the variational multiscale methods, which fall under this umbrella.

\paragraph{The advective operator.}
The second issue is associated with the advective term $(\udiv)\uvec$ in the momentum equation.
When advection dominates diffusion (i.e., the Reynolds number becomes large) or the mesh is not sufficiently fine relative to the flow features, the standard continuous Galerkin discretization of the advective term may produce spurious oscillations~\cite{brooks1982streamline,hughes1989new,codina1998comparison}.

There are several ways to resolve these difficulties, typically by introducing some form of upwind, residual-based, or fluctuation-control stabilization. Classical approaches include streamline-upwind/Petrov--Galerkin (SUPG) method~\cite{brooks1982streamline}, GLS~\cite{hughes1989new}, residual-free bubbles~\cite{brezzi1992residual,baiocchi1993stabilization}, and VMS~\cite{hughes1995multiscale}. Additional finite element strategies include discontinuous Galerkin methods with upwind numerical fluxes~\cite{cockburn2001runge}, local projection stabilization~\cite{braack2006local}, continuous interior penalty or edge-stabilization methods~\cite{douglas1976interior,burman2004edge}, and algebraic flux-correction or shock-capturing techniques for strongly underresolved flows~\cite{kuzmin2002flux,tezduyar1986discontinuity}.

\paragraph{}
As discussed above, we note that both GLS and VMS, along with various sub-grid scale theories, can provide stability against both issues. Moreover, in addition to being a method, VMS also acts as a multiscale model, and provides a general framework for formulating stabilized methods for various applications. Therefore, in the sequel, we will only focus on VMS for deriving stabilized schemes. We provide a brief description of the VMS methodology with respect to the Galerkin formulation  \eqref{eq:vms-decomposition-overview}.

\subsection{A review of the variational multiscale stabilization}
\label{sec:vms-stabilization-overview}
The variational multiscale (VMS) framework provides a unified way of deriving both advective and pressure stabilizations by decomposing the solution into multiple scales and by modeling the unresolved scales~\cite{hughes1995multiscale,hughes1989new,bazilevs2007variational}. The core idea of VMS is to write the continuous velocity and pressure fields as the sum of a \textit{coarse} and a \textit{fine} component:
\begin{subequations}
	\label{eq:vms-decomposition-overview}
	\begin{align}
		\uvecN &= \uvecN_c + \uvecN_f, \\
		\pN     &= \pN_c + \pN_f.
	\end{align}
\end{subequations}
The coarse scales represent the part of the solution that can be resolved by the discretization. And the fine scales represent features that remain unresolved. Therefore, we can approximate the coarse scales $(\uvecN_c, \pN_c)$ by the discrete functions $(\uvech,\ph)\in \spaceVh\times\spaceQh$ directly.


The fine scales $\uvecN_f$ and $\pN_f$ cannot be directly represented on the mesh. And yet, they cannot simply be discarded. Rather, their effect must be considered in the finite dimensional Galerkin formulation. The defining feature of any VMS method is precisely how this fine scale effect is approximated, and several closures have been proposed (for instance, those based on bubble functions, on the solution of local fine scale problems, or on algebraic models of the fine scale operator). Here we adopt the \textit{residual-based} VMS formulation, in which the fine scale quantities are \textit{modeled} by the residuals of the coarse scale quantities as
\begin{subequations}
	\label{eq:vms-finescale-model-overview}
	\begin{align}
		\uvecN_f &\approx \uprime := -\taum \resMomStrong, \label{eq:vms-finescale-model-overview-m} \\
		\pN_f   &\approx \pprime := -\tauc \resConStrong. \label{eq:vms-finescale-model-overview-c}
	\end{align}
\end{subequations}

where the element-wise strong-form residuals are
\begin{subequations}
	\label{eq:vms-coarse-residuals-overview}
	\begin{align}
		{\resMomStrong}
		&:=
		\sigma\uvech
		+
		(\uvech\cdot\grad)\uvech
		-
		\visco\laplacian\uvech
		+
		\grad \ph
		-
		\gvec^n, \label{eq:vms-coarse-residuals-overview-m}
		\\
		{\resConStrong}
		&:=
		\divergence\uvech, \label{eq:vms-coarse-residuals-overview-c}
	\end{align}
\end{subequations}
and $\taum$ and $\tauc$ are elementwise stabilization parameters. These parameters encode the local temporal, advective, and diffusive scales of the problem. These parameters can be calculated as~\cite{shakib1991new,bazilevs2007variational}
\begin{subequations}
	\label{eq:vms-tau-overview}
	\begin{align}
		\taum
		&=
		\left[
		\frac{4}{\dt^2}
		+\uvech\cdot\mathbf{G}\uvech
		+\cinv\visco^2(\mathbf{G}:\mathbf{G})
		\right]^{-1/2}, \label{eq:vms-tau-m}
		\\
		\tauc
		&=
		\frac{1}{\taum(\boldsymbol{g}\cdot\boldsymbol{g})}, \label{eq:vms-tau-c}
	\end{align}
\end{subequations}
where $\mathbf{G}$ and $\boldsymbol{g}$ are element metric quantities and $\cinv$ is a dimensionless constant. Their construction from the element map is given in Appendix~\ref{app:vms-parameters}.
Now, substituting the decomposition \eqref{eq:vms-decomposition-overview} in \eqref{eq:coupled-weak-residuals}, we have
\begin{subequations}
	\label{eq:coupled-vms}
	\begin{empheq}[left=\empheqlbrace]{alignat=2}
		&\resMom^{\text{VMS}}(\uvecN,\pN;\vvec) &&= \zerovec \quad \forall \vvec \in \spaceVh, \label{eq:coupled-vms-mom} \\
		&\resCon^{\text{VMS}}(\uvecN;q)        &&= 0        \quad \forall q \in \spaceQh, \label{eq:coupled-vms-con}
	\end{empheq}
\end{subequations}
where
\begin{subequations}
	\label{eq:coupled-weak-residuals-decomposed}
	\begin{align}
		\resMom^{\text{VMS}}(\uvecN,\pN;\vvec)
		&:=
		\innerh{\sigma\uvech}{\vvec}
		+\innerh{((\uvech+\uprime)\cdot\grad)(\uvech+\uprime)}{\vvec} \nonumber \\
		&\qquad+\visco\innerh{\grad\uvech}{\grad\vvec}
		-\innerh{(\ph+\pprime)}{\divergence\vvec}
		\nonumber\\
		&\qquad
		-\innerh{\gvecN}{\vvec}
		-\inner{\hvecN}{\vvec}_{\Gamma_N,h}, \label{eq:coupled-weak-residuals-mom}
		\\
		\resCon^{\text{VMS}}(\uvecN;q)
		&:=
		\innerh{\divergence(\uvech+\uprime)}{q}, \label{eq:coupled-weak-residuals-con}
	\end{align}
\end{subequations}
where all the inner products are now understood to be sums of elementwise inner products since the fine scale quantities $\uprime$ and $\pprime$ are only defined in element interiors. Equation \eqref{eq:coupled-weak-residuals-decomposed}, together with the fine scale model given by \eqref{eq:vms-finescale-model-overview} and the definitions in \eqref{eq:vms-coarse-residuals-overview} complete the description of the method.

Expanding the advective term, we have
\begin{align*}
	\innerh{(\uvech+\uprime)\cdot\grad(\uvech+\uprime)}{\vvec} &= \innerh{(\uvech+\uprime)\cdot\grad\uvech}{\vvec} + \innerh{(\uvech+\uprime)\cdot\grad\uprime}{\vvec}.
\end{align*}
To avoid a differentiation of $\uprime$ (which appears in the second inner product), we write
\begin{align*}
	\innerh{(\uvech+\uprime)\cdot\grad\uprime}{\vvec} &= \inner{\divergence\left[\uprime(\uvech+\uprime)^T\right]}{\vvech}
	- \inner{(\divergence(\uvech+\uprime))\,\uprime}{\vvech} \\
	&\approx \inner{\divergence\left[\uprime(\uvech+\uprime)^T\right]}{\vvech} \\
	&= - \inner{\uprime(\uvech+\uprime)^T}{\grad\vvech} + \innerB{\normalvec\cdot\left[\uprime(\uvech+\uprime)^T\right]}{\vvech} \\
	&= - \inner{\uprime}{\uvech\cdot\grad\vvech} - \inner{\uprime}{\uprime\cdot\grad\vvech} + \innerB{\normalvec\cdot\left[\uprime(\uvech+\uprime)^T\right]}{\vvech},
\end{align*}
where in the second step, we have assumed that the total velocity $\uvec = \uvech + \uprime$ is  divergence free; and in the third step, we have performed an integration by parts. The inner product $ - \inner{\uprime}{\uvech\cdot\grad\vvech} $ is the so-called SUPG term since it introduces the streamline derivative of the test function against the full residual,
\begin{align*}
	-\innerh{\uprime}{\uvech\cdot\grad\vvec} = -\innerh{-\taum\resMomStrong}{\uvech\cdot\grad\vvec} = \innerh{\taum\resMomStrong}{\uvech\cdot\grad\vvec}.
\end{align*}
Similarly, focusing on the continuity term, we have
\begin{align}
	\innerh{\divergence(\uvech+\uprime)}{q} &= \inner{\divergence\uvech}{q} + \innerh{\divergence\uprime}{q} \nonumber\\
	&= \inner{\divergence\uvech}{q} - \innerh{\uprime}{\grad q} + \inner{\normalvec\cdot\uprime}{q}_{\Gamma_{N},h} \quad\text{(integ.-by-parts)} \nonumber \\
	&= \inner{\divergence\uvech}{q} + \innerh{\taum\resMomStrong}{\grad q} + \inner{\normalvec\cdot\uprime}{q}_{\Gamma_{N},h}
	\label{eq:vms-discussion-pspg-term}
\end{align}
where, in the second step, we have used the assumption that $\uvecN = \uvech$ on $\Gamma_D$ and therefore $\uprime=0$. The second term in the last line, i.e., $\innerh{\taum\resMomStrong}{\grad q}$ is exactly the stabilization term applied in the PSPG method. And finally, the pressure terms are given by
\begin{align*}
	-\innerh{(\ph+\pprime)}{\divergence\vvec} &= -\inner{\ph}{\divergence\vvec} - \innerh{\pprime}{\divergence\vvec} \\
	&= -\inner{\ph}{\divergence\vvec} + \innerh{\tauc \resConStrong}{\divergence\vvec},
\end{align*}
where, we can identify the second inner product as the ``grad-div'' stabilization term. A broader discussion on both fully-implicit and semi-implicit formulations of VMS can be found in \cite{khara2025semi}.

\begin{remark}[Underlying assumptions]
	\label{rem:vms-assumptions-time-diffusion}
	The implicit assumptions in \eqref{eq:coupled-weak-residuals-mom} are:
	\begin{enumerate}
		\item In the discrete time-derivative term, we have only considered the coarse-scale $\uvech$. This follows from the underlying assumption
		\begin{align}
			\inner{\timederShort{\uprime}}{\vvec} = 0 \quad \text{or } \inner{\timederShort{\uvech + \uprime}}{\vvec} = \inner{\timederShort{\uvech}}{\vvec}.
		\end{align}
		In the discretized form, this term enters \eqref{eq:coupled-weak-residuals-mom} through both $\inner{\sigma \uvech}{\vvec}$ and $\inner{\gvecN}{\vvec}$.
		\item We also make a similar simplification in the viscous term:
		\begin{align}
			\inner{\grad \uprime}{\grad \vvec} = 0 \quad\text{or } \inner{\grad (\uvech+\uprime)}{\grad \vvec} = \inner{\grad \uvech}{\grad \vvec}.
		\end{align}
		\item We neglect the fine scales $\uprime$ on the boundaries, i.e.,
		\begin{align}
			\uprime = 0\ \text{on}\ \partial\Omega.
		\end{align}
	\end{enumerate}
	These simplifications follow the assumptions made in \cite{bazilevs2007variational}.
\end{remark}

\paragraph{Matrix equations.}
At the algebraic level, the stabilization changes the structure of the Newton system \eqref{eq:discrete-system-coupled-newton} as
\begin{align}
	\label{eq:linear-system-vms}
	\begin{bmatrix}
		\mmat{K}^{\stab} 
		&
		\mmat{G}^{\stab}
		\\
		\mmat{D}^{\stab}
		&
		-\mmat{S}^{\stab}
	\end{bmatrix}
	\begin{bmatrix}
		\delta\mvec{U} \\
		\delta\mvec{P}
	\end{bmatrix}
	=
	-
	\begin{bmatrix}
		\mvec{R}_{M}^{\stab} \\
		\mvec{R}_{C}^{\stab}
	\end{bmatrix}.
\end{align}
Here, the superscript ``${\stab}$'' stands for ``stabilized.'' Note that the zero pressure diagonal block is replaced by a discrete negative Laplacian operator $-\mmat{S}^{\stab}$. Similarly the other block matrices $\mmat{K}$, $\mmat{G}$ and $\mmat{D}$, along with the residuals are replaced by their stabilized counterparts. Note that both $ \mmat{K}^{\stab} $ and $ \mmat{S}^{\stab} $ are positive definite (accounting for appropriate boundary conditions), and therefore the system in \eqref{eq:linear-system-vms} is still a saddle-point system.

%

\paragraph{}
The preceding discussion shows that residual-based VMS stabilization provides a systematic way to regularize the fully coupled Galerkin formulation: the SUPG-like terms control advective instabilities, while the PSPG- and grad-div-like terms relax the strict inf-sup requirement and modify the algebraic saddle-point structure. However, the resulting monolithic stabilized system remains a nonlinear saddle-point problem with velocity-pressure coupling. This motivates an alternative strategy in which the stabilizing effect of VMS is retained, but the velocity-pressure coupling is relaxed by a splitting approach. We discuss this below.

\subsection{A VMS-stabilized projection scheme}
\label{sec:vms-projection}
In this section, we introduce a VMS-stabilized projection method for the incompressible Navier-Stokes equations. The core idea is to use a second order splitting method based on a Helmholtz-Leray-type projection method (inspired by the classical Chorin-Temam-type schemes), but stabilize the momentum equations using a VMS-style coarse-fine decomposition of the velocities.

%

\subsubsection{The Chorin-Temam family of projection methods}
The classical projection method of Chorin and Temam~\cite{chorin1968numerical, temam1969sur} has inspired a large family of fractional-step schemes, including the incremental~\cite{goda1979multistep} and rotational incremental pressure-correction variants~\cite{timmermans1996approximate}. In this work we begin with the incremental pressure-correction scheme due to van Kan~\cite{vankan1986second}, which retains the previous pressure $\nabla p^{n}$ in the viscous step. For the time-discrete equations \eqref{eq:nse-bdf}, this scheme can be written as
\begin{subequations}
	\label{eq:chorin-temam-generic}
	\begin{align}
	\sigma \uvectN + (\ddiv{\uvectN})\uvectN - \visco\laplacian\uvectN + \grad \pstar - \gvec^n &= \mathbf{0} \ \text{in}\ \spatialD, \\
	\uvectN &= \uvec_{D} \ \text{on}\ \Gamma_D,
	\end{align}
\end{subequations}
where $\uvectN$ is an intermediate velocity solution, and $\pstar$ is an extrapolation of pressure from the previous steps, i.e., $\pstar$ is a known quantity at time-step $n$. The solenoidality constraint is ignored  in the solution of \eqref{eq:chorin-temam-generic}, thus $\uvectN$ is not generally solenoidal. A divergence-free velocity $\uvecSN$ and the pressure $\phatN$ are then obtained from
\begin{subequations}
	\label{eq:pcvu-hp}
	\begin{align}
		\sigma (\uvecSN - \uvectN) + \grad \phatN - \grad \pstar &= 0 \ \text{in}\ \spatialD, \label{eq:pcvu-hp-1}\\
		\divergence\uvecSN &= 0 \ \text{in}\ \spatialD, \label{eq:pcvu-hp-2}\\
		\uvecSN \cdot \normalvec &= \uvectN \cdot \normalvec\ \text{on}\ \Gamma_D. \label{eq:pcvu-hp-3}
	\end{align}
\end{subequations}
Eq.~\eqref{eq:pcvu-hp} is still a coupled set of equations between velocity and pressure. However, they can be split further into two new subproblems. Applying $-\grad\cdot$ to Eq~\eqref{eq:pcvu-hp-1},
\begin{subequations}
	\label{eq:ppe}
	\begin{align}
		-\laplacian \phatN &= - \laplacian \pstar
		-\sigma\grad\cdot\uvectN \text{ in }\spatialD\\
		\grad (\phatN-\pstar)\cdot\normalvec &= 0 \ \text{on}\ \Gamma_D.
	\end{align}
\end{subequations}
Eq. \eqref{eq:ppe} can be solved solely for $\phatN$, and is known as the pressure Poisson equation. Finally, the solenoidal velocity $\uvecSN$ is given by
\begin{align}
	\label{eq:vu}
	\uvecSN = \uvectN - \nicefrac{1}{\sigma} \grad (\phatN-\pstar) \ \text{in}\ \spatialD.
\end{align}
Eq.~\eqref{eq:vu} is often called the velocity update equation. Note that $\uvecSN$ is divergence-free but may not respect the Dirichlet boundary conditions since they are not imposed on $\uvecSN$. 

\paragraph{Choice of $\bm{\pstar}$.}
Popular choices for $\pstar$ are
\begin{align}
	\pstar &=
	\begin{cases}
		0\ \text{(Chorin-Temam)}, \\
		p^{n-1}\ (1^{\textrm{st}}\ \text{order extrapolation-rule}).
	\end{cases}
\end{align}
In this work, we use the first order extrapolation rule, which yields the ``incremental pressure correction'' method~\cite{goda1979multistep,vankan1986second}. A broader overview of projection-based methods can be found in~\cite{guermond2006overview}.

\paragraph{Galerkin formulation.}
\label{sec:galerkin-of-proj}
The Galerkin formulation of \eqref{eq:chorin-temam-generic} mirrors the presentation in \secref{sec:implicit-galerkin-formulation}. We seek $\uvectN \in \spaceVd$ such that, for all $\vvec \in \spaceV$,
\begin{multline} \label{eq:nse-bdf-mom-galerkin}
	\inner{\sigma \uvectN}{\vvec} + \inner{(\uvectN\cdot\grad)\uvectN}{\vvec} + \visco\inner{\grad\uvectN}{\grad \vvec} + \inner{\grad\pstar}{\vvec}  + \inner{\normalvec\cdot\left[- \visco\grad \uvectN\right]}{\vvec}_{\Gamma_{N}} = \inner{\gvecN}{\vvec}
\end{multline}
holds for any $\vvec\in \spaceV $. For the Galerkin formulation of \eqref{eq:ppe}, let us define 
\begin{align}
	\phi = \phatN - \pstar.
\end{align}
Then, for a test function $q \in \spaceQ$, we can write
\begin{align}
	-\intSpace \laplacian \phi \, q\, \dx = \intSpace \grad \phi \cdot \grad q \,\dx + \int_{\partial\spDom} (\normalvec\cdot\grad \phi) q\, \ds.
\end{align}
Now, $\partial \spDom = \Gamma_D^p \cup \Gamma_N^p$, and we have $\normalvec\cdot\grad \phi = 0$ on $\Gamma_N^p$, and $q = 0$ on $\Gamma_D^p$, therefore, the boundary term vanishes. So, we have
\begin{align}
	 \label{eq:nse-bdf-ppe-galerkin}
	\inner{\grad \phatN}{\grad q} = \inner{\grad \pstar}{\grad q} - \sigma \inner{\divergence\uvectN}{q}
\end{align}
for all $q \in \spaceQ$. Finally, the velocity update is given by
\begin{align}
	 \label{eq:nse-bdf-vue-galerkin}
	\inner{\uvecSN}{\wvec} = \inner{\uvectN}{\wvec} - \nicefrac{1}{\sigma} \inner{\grad (\phatN - \pstar)}{\wvec} \quad \forall \wvec\in H^1(\spDom).
\end{align}
Therefore, in summary, we seek $\uvectN\in \spaceVd$, $\phatN \in \spaceQ$ and $\uvecSN \in \bm{H}^1(\Omega)$ such that,
\begin{subequations}\label{eq:chorin-temam-galerkin}
	\begin{empheq}[left=\empheqlbrace]{alignat=2}
		&{\text{Momentum}}:\quad
		&&\inner{\sigma \uvectN}{\vvec}
		+ \inner{\uvectN\cdot\grad\uvectN}{\vvec}
		+ \visco\inner{\grad\uvectN}{\grad \vvec}
		+ \inner{\grad\pstar}{\vvec}
		\nonumber\\
		&&&\quad
		+ \inner{\normalvec\cdot\left[- \visco\grad \uvectN\right]}{\vvec}_{\Gamma_{N}}
		= \inner{\gvecN}{\vvec}
		\ \forall \vvec\in \spaceV,
		\label{eq:chorin-temam-gal-mom} \\[0.5em]
		&{\text{Pressure\ Poisson}:}\quad
		&&\inner{\grad \phatN}{\grad q}
		= \inner{\grad \pstar}{\grad q}
		- \sigma \inner{\divergence\uvectN}{q}
		\ \forall q \in \spaceQ,
		\label{eq:chorin-temam-gal-ppe} \\[0.5em]
		&{\text{Velocity\ update}:}\quad
		&&\inner{\uvecSN}{\wvec}
		= \inner{\uvectN}{\wvec}
		- \nicefrac{1}{\sigma}\inner{\grad(\phatN-\pstar)}{\wvec}
		\ \forall \wvec\in \boldsymbol{H}^1(\spDom). \label{eq:chorin-temam-gal-vue}
	\end{empheq}
\end{subequations}

%
%

\subsubsection{The projection method with VMS stabilization}
\label{sec:projection-with-vms}
Eq. \eqref{eq:chorin-temam-gal-mom}, \eqref{eq:chorin-temam-gal-ppe} and \eqref{eq:chorin-temam-gal-vue} are solved sequentially in each time step. Thus, this system of equations no longer have the saddle-point nature of the fully coupled formulation given in \eqref{eq:nse-bdf-galerkin}. However, \eqref{eq:chorin-temam-galerkin} still needs to be provided with advective and pressure stability, both of which the VMS treatment confers on the resulting scheme, as discussed below. To this end, we adapt the VMS methodology in the projection scheme framework. 

%
%
%
%


In the classical VMS framework of \secref{sec:vms-stabilization-overview}, both the velocity and the pressure are decomposed into coarse and fine scales, because the monolithic formulation must be stabilized against \emph{both} sources of instability identified in \secref{sec:implicit-galerkin-main}: the inf--sup deficiency of the velocity-pressure saddle point, and the dominance of advection in the momentum equation. The projection splitting changes this picture fundamentally, because it dismantles the saddle point itself. It is therefore worth examining the three subproblems~\eqref{eq:chorin-temam-gal-mom}--\eqref{eq:chorin-temam-gal-vue} in turn to decide which of them still requires a fine-scale model.

Consider first the pressure. In the monolithic formulation the pressure carries a
fine scale $\pprime$---and with it the parameter $\tauc$---precisely to relax the
inf--sup condition for equal-order spaces. Here the pressure is no longer the
Lagrange multiplier of an indefinite saddle point: the projection has recast it as
the unknown of the pressure Poisson equation~\eqref{eq:chorin-temam-gal-ppe}, whose
bilinear form $\inner{\grad\phatN}{\grad q}$ is coercive on
$\spaceQ\cap L^2_0(\spDom)$ by the Poincar\'e inequality. By the Lax--Milgram theorem
this problem is well-posed for equal-order velocity-pressure spaces with \emph{no}
inf--sup condition. The structural reason for decomposing
the pressure has thus disappeared, and $\phatN$ may be approximated directly by
$\phath$ without any scale separation.

The same holds for the corrected velocity. The update~\eqref{eq:chorin-temam-gal-vue}
is an $L^2$-projection onto the weakly divergence-free subspace; its bilinear form is
the $L^2$ inner product, coercive with constant one and unconditionally stable, and it
involves neither an advective operator nor a constraint. There is consequently nothing
to stabilize, and the divergence-free velocity $\uvecSN$ is not decomposed either.

What the splitting does \emph{not} remove is the advective instability, and that
instability is now confined to a single subproblem. The momentum
predictor~\eqref{eq:chorin-temam-gal-mom} is an advection--diffusion--reaction problem
in which the pressure has been demoted to known data; at moderate-to-high Reynolds
number its standard Galerkin discretization suffers exactly the spurious oscillations
described in \secref{sec:implicit-galerkin-main}. This is the only subproblem whose
well-posedness genuinely requires stabilization, and it is therefore the only one in
which a fine scale must be introduced. We accordingly decompose \emph{only} the
intermediate velocity $\uvectN$ and model its fine part by the momentum residual,
which furnishes the streamline (SUPG-type) control familiar from residual-based
stabilized methods~\citep{brooks1982streamline,hughes1995multiscale}.

Finally, although only the predictor \emph{requires} a fine scale, the unresolved
part $\uprimet$ that it generates is a genuine component of the intermediate velocity
and must be retained wherever that velocity subsequently reappears: it sources the
pressure through the divergence $\divergence(\uvecth+\uprimet)$ on the right-hand side
of~\eqref{eq:chorin-temam-gal-ppe}, and it is part of the field projected
in~\eqref{eq:chorin-temam-gal-vue}. In this sense all three subproblems ``see'' the fine scale. Its appearance in the pressure Poisson
equation is moreover what recovers a consistent PSPG-like term
(cf.~\eqref{eq:ppe-pspg-equiv-1}). We emphasize, however, that carrying $\uprimet$
into~\eqref{eq:chorin-temam-gal-ppe} and~\eqref{eq:chorin-temam-gal-vue} is a matter
of \emph{consistency} with the stabilized momentum balance, not of stability: both
subproblems are independently well-posed whether or not $\uprimet$ is retained. Only
the predictor needs the fine scale to be stable; the remaining two subproblems merely
inherit it. 

We therefore apply the VMS decomposition to $\uvectN$ as
\begin{align}
	\label{eq:vms-coarse-fine-decomposition}
	\uvectN(\xvec) &= \uvecth(\xvec) + \uvect_f(\xvec),
\end{align}
where $\uvecth$ and $\uvect_f$ denote the usual coarse and fine scales. Just like in \eqref{eq:vms-finescale-model-overview}, the fine scales of $\uvectN$ are modeled \emph{element-wise} in terms of the coarse-scale residuals of the momentum equations, i.e.,
\begin{align}
	\label{eq:nse-fine-scale-approximations}
	\uvect_f &\approx \uprimet := -\taum \resMomStrongTilde
\end{align}
where $\resMomStrongTilde$ is the elementwise strong-form residual of the momentum equation given by
\begin{align}
	\label{eq:coarse-scale-residuals-proj}
	\resMomStrongTilde := \sigma\uvecth + (\uvecth\cdot\grad)\uvecth -\visco\laplacian\uvecth + \grad\pstar_h - \gvec^n.
\end{align}
The stabilization parameter $\taum$ is evaluated using the same formula in \eqref{eq:vms-tau-m}, with $\uvech$ replaced by $\uvecth$.
Finally, substituting ~\eqref{eq:vms-coarse-fine-decomposition} in~\eqref{eq:chorin-temam-galerkin} and replacing $\uvect_f$ by $\uprimet$, we have
\begin{subequations}\label{eq:chorin-temam-vms}
	\begin{empheq}[left=\empheqlbrace, box=\fbox]{alignat=2}
		&{\text{Momentum (VMS)}}:\quad
		&&\inner{\sigma \uvecth}{\vvech}
		+ \inner{(\uvecth+\uprimet)\cdot\grad(\uvecth+\uprimet)}{\vvech} \nonumber\\
		&&&+ \visco\inner{\grad\uvecth}{\grad \vvech} 
		+ \inner{\grad\pstar_h}{\vvech} \nonumber \\ 
		&&&+ \inner{\normalvec\cdot\left[- \visco\grad \uvecth\right]}{\vvech}_{\Gamma_{N}}
		= \inner{\gvecN}{\vvech}
		\quad \forall \vvech\in \spaceVh,
		\label{eq:chorin-temam-vms-mom} \\[0.5em]
		&{\text{Pressure\ Poisson (VMS)}:}\quad
		&&\inner{\grad \phath}{\grad q}
		= \inner{\grad \pstar_h}{\grad q}
		- \sigma \inner{\divergence(\uvecth+\uprimet)}{q}
		\ \forall q \in \spaceQ,
		\label{eq:chorin-temam-vms-ppe} \\[0.5em]
		&{\text{Velocity\ update (VMS)}:}\quad
		&&\inner{\uvecSh}{\wvec}
		= \inner{(\uvecth+\uprimet)}{\wvec}
		- \tfrac{1}{\sigma}\inner{\grad(\phath-\pstar_h)}{\wvec}
		\ \forall \wvec\in \boldsymbol{H}^1(\spDom). \label{eq:chorin-temam-vms-vue}
	\end{empheq}
\end{subequations}
Eq. \eqref{eq:chorin-temam-vms}, along with \eqref{eq:nse-fine-scale-approximations} and \eqref{eq:coarse-scale-residuals-proj} specifies the stabilized projection method. Note that the simplifications discussed in Remark~\ref{rem:vms-assumptions-time-diffusion} are also applied to $\uvecth$ in \eqref{eq:chorin-temam-vms-mom}.

Since $\uprime$ is defined element-wise, all inner products involving $\uprime$ in \eqref{eq:chorin-temam-vms} are to be interpreted as elementwise. Once \eqref{eq:nse-fine-scale-approximations}--\eqref{eq:coarse-scale-residuals-proj} are substituted in \eqref{eq:chorin-temam-vms-mom}, it can be solved for $\uvecth$. Next, \eqref{eq:chorin-temam-vms-ppe} can be solved for $\phath$ since $\pstar$, $\uvecth$ and $\uprimet$ are known at this point. Finally, \eqref{eq:chorin-temam-vms-vue} can be solved for $\uvecSh$ since all the quantities in the right hand side are known. The full algorithm is provided in Algorithm~\ref{alg:vms-chorin-projection}. The fully expanded forms of these equations in terms of $\uvecth$ (after substituting the fine-scale model from \eqref{eq:nse-fine-scale-approximations} and \eqref{eq:coarse-scale-residuals-proj}) are also given in Algorithm~\ref{alg:vms-chorin-projection}.

In the VMS-stabilized projection method discussed above, the monolithic problem \eqref{eq:linear-system-vms} is replaced by a sequence of elliptic subproblems given in \eqref{eq:chorin-temam-vms}. The intermediate velocity problem is a nonlinear advection-diffusion-type equation, whereas the pressure Poisson and the velocity update equations are linear elliptic problems.

\begin{remark}
	In \eqref{eq:chorin-temam-vms-ppe}, the spatial derivatives of $\uprimet$ are not computable, therefore one must perform an integration-by-parts to keep it derivative-free, i.e.,
	\begin{align}
		\label{eq:fine-scale-treatment-cont}
		-\innerh{\divergence(\uvecth+\uprimet)}{q} = -\inner{\divergence\uvecth}{q} + \innerh{\uprimet}{\grad q} - \inner{\normalvec\cdot\uprimet}{q}_{\Gamma_{N},h}.
	\end{align}
	The second inner product on the right hand side is
	\begin{align}
		\innerh{\uprimet}{\grad q} = -\innerh{\taum \resMomStrongTilde}{\grad q},
	\end{align}
	which is immediately recognized to be very similar to the classical PSPG stabilization term (cf. \eqref{eq:vms-discussion-pspg-term}).
\end{remark}
\begin{remark}
	Note that, due to the fact that pressure is not decomposed into coarse and fine scales, the second stabilization parameter $\tauc$ does not appear in \eqref{eq:chorin-temam-vms}.
\end{remark}
\begin{algorithm}[t]
	\caption{VMS-stabilized projection scheme (one time step $t^n \to t^{n+1}$)}
    \label{alg:vms-chorin-projection}
	\begin{algorithmic}

	\Require Previous-time solution fields and boundary data; forcing
	$\gvec^n$; extrapolated pressure $\pstar_h$; coefficient $\sigma$
	from the BDF scheme.
	\Ensure Updated velocity $\uvecSh$ and pressure $\phath$.

	\AlgStage{Step 1: Momentum predictor (nonlinear)}
	\State Find $\uvecth \in \spaceVhd$ such that, for all
	$\vvech \in \spaceVh$,
	\begin{multline*}
		\scriptsize
		\sigma\,(\uvecth, \vvech)
		+ \bigl((\uvecth\cdot\grad)\uvecth, \vvech\bigr)
		+ \visco\,(\nabla\uvecth, \nabla\vvech)
		+ (\nabla\pstar_h, \vvech) + \bigl(\normalvec\cdot[- \visco\,\nabla\uvecth],\vvech\bigr)_{\Gamma_N} \\
		- \sum_{K \in \mesh} \bigl(\taum \left[ \sigma\uvecth + (\uvecth\cdot\grad)\uvecth -\visco\laplacian\uvecth + \grad\pstar_h -\gvecN \right]\cdot\grad\uvecth, \vvech\bigr)_K \\
		+ \sum_{K \in \mesh} \bigl(\taum \left[ \sigma\uvecth + (\uvecth\cdot\grad)\uvecth -\visco\laplacian\uvecth + \grad\pstar_h -\gvecN \right], (\uvecth\cdot\grad)\vvech\bigr)_K \\
		- \sum_{K \in \mesh} \bigl(\taum \left[ \sigma\uvecth + (\uvecth\cdot\grad)\uvecth -\visco\laplacian\uvecth + \grad\pstar_h -\gvecN \right], \taum \left[ \sigma\uvecth + (\uvecth\cdot\grad)\uvecth -\visco\laplacian\uvecth + \grad\pstar_h -\gvecN \right]\cdot\grad\vvech\bigr)_K \\
		- (\gvec^n, \vvech) = 0.
	\end{multline*}
	Here, $\uprimet = -\taum\resMomStrongTilde$, and the boundary integral in the penultimate line arises from the integration by parts of the fine-scale advection; it vanishes when the fine scales are neglected on $\partial\Omega$ (Remark~\ref{rem:vms-assumptions-time-diffusion}). This nonlinear system is solved with Newton's method.

	\AlgStage{Step 2: Pressure Poisson equation (linear)}
	\State With $\uvecth$ given from Step 1, find
	$\phath \in \spaceQh$ such that, for all $q \in \spaceQh$,
	\begin{align*}
		(\nabla\phath, \nabla q) = (\nabla\pstar_h, \nabla q)
		- \sigma\,\bigl(\nabla\cdot\uvecth, q\bigr) - \sigma \sum_{K \in \mesh} \bigl(\taum \left[\sigma\uvecth + (\uvecth\cdot\grad)\uvecth -\visco\laplacian\uvecth + \grad\pstar_h -\gvecN \right], \nabla q\bigr)_K.
	\end{align*}

	\AlgStage{Step 3: Velocity projection (linear)}
	\State Given $\uvecth$ (from Step 1) and $\phath$ (from Step 2), find the corrected velocity
	$\uvecSh \in \boldsymbol{H}^1(\Omega)$ such that, for all
	$\wvec \in \boldsymbol{H}^1(\Omega)$,
	\begin{align*}
		(\uvecSh, \wvec) = (\uvecth, \wvec) - \sum_{K \in \mesh} \bigl(\taum \left[\sigma\uvecth + (\uvecth\cdot\grad)\uvecth -\visco\laplacian\uvecth + \grad\pstar_h -\gvecN \right], \wvec \bigr)_K
		- \tfrac{1}{\sigma}\bigl(\nabla(\phath - \pstar_h), \wvec\bigr).
	\end{align*}

	\AlgStage{Step 4: Pressure update}
	\State Set $\pstar_h \gets \phath$ for the next time step.

	\end{algorithmic}
\end{algorithm}

\begin{remark}[Linearization of the momentum predictor]
\label{rem:linearization}
The momentum predictor in Step~1 of Algorithm~\ref{alg:vms-chorin-projection}
is a fully nonlinear system in $\uvecth$. We solve it with Newton's
method (PETSc SNES), reassembling the residual and Jacobian at each
Newton iterate. The fine-scale closure
$\uprimet = -\taum(\uvecth)\,\resMomStrongTilde(\uvecth,\pstar_h)$
depends on the current iterate of $\uvecth$ and is recomputed
consistently at every Newton step; no lagging or extrapolation is
applied to $\uprimet$ or to the advecting velocity inside the step.
The resulting fully implicit treatment converges in one or two Newton
iterations per time step for the cases reported in
Section~\ref{sec:results}, with the inner linear systems solved by
BiCGStab with additive Schwarz preconditioning.
\end{remark}

\subsubsection{Discussion on the proposed method}
\begin{figure}[t]
	\centering
	\begin{tikzpicture}[>=stealth]
		\node (A) at (0,3.0)    {$\left(\uvec(t^{n}),\, p(t^{n})\right)$};
		\node (B) at (0,0)      {$\left(\uvecN,\, \pN\right)$};
		\node (C) at (3.0,-2.4) {$\left(\uvectN,\, \uvecSN,\, \phatN\right)$};
		\node (D) at (3.0,-5.2) {$\left(\uvecth,\, \uvecSh,\, \phath\right)$};
		\node (E) at (0,-5.2)   {$\left(\uvech,\, \ph\right)$};
		\draw[->,very thick] (A) -- node[midway,left,align=center]{\small time\\ \small discretization} (B);
		\draw[->,very thick] (B) -- node[midway,above,sloped,align=center]{\small projection\\ \small splitting} (C);
		\draw[->,very thick] (C) -- node[midway,right,align=center]{\small VMS\\ \small (split)} (D);
		\draw[->,dashed]     (B) -- node[midway,left,align=center]{\small VMS\\ \small (monolithic)} (E);
	\end{tikzpicture}
	\caption{A schematic diagram of the path to the VMS-stabilized projection method \eqref{eq:chorin-temam-vms}. For reference, the path to the monolithic VMS problem \eqref{eq:coupled-vms} is shown with a dashed line.}
	\label{fig:schematic-method-path}
\end{figure}
The VMS-stabilized projection scheme given by \eqref{eq:chorin-temam-vms} is both similar to and different from the monolithic VMS formulation given by \eqref{eq:coupled-vms} in multiple ways. Below we provide a comparative discussion.

\paragraph{Logical sequence of discretization.} The two formulations and the steps relating them are summarized in \figref{fig:schematic-method-path}. Beginning from the continuous solution $\left(\uvec(t^{n}),p(t^{n})\right)$, a time discretization gives the coupled time-discrete problem $\left(\uvecN,\pN\right)$. From there, The dashed path applies the VMS discretization directly to the coupled problem, yielding the monolithic solution $\left(\uvech,\ph\right)$ of \eqref{eq:coupled-vms}. Whereas, the solid path applies the projection splitting to obtain the fractional-step quantities $\left(\uvectN,\uvecSN,\phatN\right)$, and then the VMS spatial discretization of the split problem to obtain the proposed solution $\left(\uvecth,\uvecSh,\phath\right)$ of \eqref{eq:chorin-temam-vms}. The two routes share the common starting point $\left(\uvecN,\pN\right)$.

\paragraph{Momentum equation / velocity prediction.}
In the VMS-stabilized projection scheme \eqref{eq:chorin-temam-vms}, the momentum equation (or the velocity predictor) is nonlinear in $\uvech$, just like the monolithic VMS formulation \eqref{eq:coupled-vms}. In fact, the left hand side of \eqref{eq:chorin-temam-vms-mom} looks very close to the monolithic momentum residual \eqref{eq:coupled-weak-residuals-mom}, except the pressure, which is a known quantity in \eqref{eq:chorin-temam-vms-mom}, but a coupled unknown in \eqref{eq:coupled-weak-residuals-mom}.

\paragraph{Pressure Poisson equation.} The monolithic formulation \eqref{eq:coupled-vms} does not comprise a separate pressure Poisson equation. However, we can still draw parallels between the continuity equation in \eqref{eq:coupled-vms-con} and the PPE \eqref{eq:chorin-temam-vms-ppe}. For the discussion below, let us assume that $\Gamma_{N} = \phi$.

From \eqref{eq:fine-scale-treatment-cont}, we can write
\begin{align}
	\innerh{\divergence\uprimet}{q} &= -\innerh{\uprimet}{\grad q} + \inner{\normalvec\cdot\uprimet}{q}_{\Gamma_{N},h} = -\innerh{\uprimet}{\grad q} = \innerh{\taum \resMomStrongTilde}{\grad q}.
\end{align}
From the projection method PPE \eqref{eq:chorin-temam-vms-ppe}, we have
\begin{align}
	0 &= \inner{\grad \phath}{\grad q}
	- \inner{\grad \pstar_h}{\grad q}
	+ \sigma \inner{\divergence(\uvecth+\uprimet)}{q} \nonumber \\
	&= \inner{\grad \phath}{\grad q}
	- \inner{\grad \pstar_h}{\grad q} + \sigma \inner{\divergence\uvecth}{q} \nonumber \\ &\qquad\qquad + \innerh{\sigma\taum \left[\sigma\uvecth + (\uvecth\cdot\grad)\uvecth -\visco\laplacian\uvecth + \grad\pstar_h - \gvec^n\right]}{\grad q}. \label{eq:ppe-pspg-equiv-1}
\end{align}
Similarly for the monolithic case, multiplying \eqref{eq:coupled-vms-con} by $\sigma$, we have
\begin{align}
	0 &=\sigma \innerh{\divergence(\uvech+\uprime)}{q} \nonumber \\
	&= \sigma \inner{\divergence\uvech}{q} + \sigma\innerh{\taum \left[\sigma\uvech
		+ (\uvech\cdot\grad)\uvech - \visco\laplacian\uvech + \grad \phath - \gvec^n\right]}{\grad q} \nonumber \\
	&= \inner{\sigma\taum \grad\phath}{\grad q} - \inner{\sigma\taum \grad{p^{**}}}{\grad q} + \sigma\inner{\divergence\uvech}{q}  \nonumber \\ &\qquad\qquad + \innerh{\sigma\taum \left[\sigma\uvech
		+ (\uvech\cdot\grad)\uvech - \visco\laplacian\uvech  + \grad {p^{**}} - \gvec^n\right]}{\grad q}, \label{eq:ppe-pspg-equiv-2}
\end{align}
where we have introduced a new quantity $p^{**}$ in the third line to make the comparison clearer, even though $p^{**}$ is not the same as $\pstar$. However, $p^{**}$ can be chosen in a similar way, e.g., $p^{**}$ could be set to $p^{n-1}_h$, i.e., $p^{**} \gets p^{n-1}_h$. Written this way, we can see that the equations given by \eqref{eq:ppe-pspg-equiv-1} and \eqref{eq:ppe-pspg-equiv-2} are very similar to each other. Both formulations result in a Poisson operator on the pressure with different coefficients. A major difference however, is that \eqref{eq:ppe-pspg-equiv-1} is a linear equation in $\phathN$ since $\uvecthN$ is known (see Algorithm \ref{alg:vms-chorin-projection}). Whereas, \eqref{eq:ppe-pspg-equiv-2} is linear in $\phath$ but nonlinear in $\uvecthN$. Note that the expressions in \eqref{eq:ppe-pspg-equiv-1} and \eqref{eq:ppe-pspg-equiv-2} are dimensionally consistent since $\taum$ has units of time (see \eqref{eq:vms-tau-m}).

\paragraph{Velocity update equation.} Define the solenoidal space
\begin{align}
	\mvec{V}_{\mathrm{div}}(\spatialD)
	:= \left\{\wvec\in\mvec{H}^1(\spatialD):
	\divergence\wvec=0\ \text{in }\spatialD,\quad
	\wvec\cdot\normalvec=0\ \text{on }\partial\spatialD\right\},
\end{align}
For the Galerkin method, it is well known that the corrected velocity $\uvecSN$ is weakly divergence-free (by design), and is the $L^2(\spatialD)$-projection of $\uvectN$ into $\mvec{V}_{\mathrm{div}}(\spatialD)$. These properties also hold for the VMS-stabilized method presented above. In \eqref{eq:chorin-temam-vms-vue}, if we choose the test function $\wvec$ from $\mvec{V}_{\mathrm{div}}(\spatialD)$ instead of $\mvec{H}^1(\spatialD)$, we have
\begin{align}
	0 &= \inner{\uvecSN - (\uvecth+\uprimet)}{\wvec} - \tfrac{1}{\sigma}\inner{\grad (\phath - \pstar_h)}{\wvec} \\
	&= \inner{\uvecSN - (\uvecth+\uprimet)}{\wvec} + \tfrac{1}{\sigma}\inner{\phath - \pstar_h}{\divergence\wvec} \\
	&= \inner{\uvecSN - (\uvecthN+\uprime)}{\wvec}
\end{align}
since $\divergence\wvec = 0$ in $\spatialD$.

\paragraph{Difference between the end of step velocity solutions.}
The monolithic formulation solves for $\uvech$ which is a coarse scale quantity. It has a fine scale counterpart that must be accounted for when the total velocity is needed in a calculation. The intermediate velocity in the VMS/projection formulation \eqref{eq:chorin-temam-vms} is of the same characteristic, i.e., it has a fine-scale counterpart as seen in \eqref{eq:chorin-temam-vms}. However, the corrected velocity $\uvecSN$ does not have this interpretation since it is an $L^2(\spatialD)$-projection of the total intermediate velocity.

\section{Error analysis}
\label{sec:analysis}

In this section, we provide a \emph{formal} analysis of the VMS-stabilized projection scheme derived above. The scheme involves two distinct approximations of the continuous, coupled Navier-Stokes equations~\eqref{eq:nse}: a \emph{temporal splitting}, in which the Helmholtz-Leray projection replaces the coupled momentum--continuity system by the sequence \eqref{eq:chorin-temam-generic}--\eqref{eq:vu}, and the \emph{spatial discretization} of the split equations by the variational multiscale method, yielding~\eqref{eq:chorin-temam-vms}. We quantify the errors of these two approximations in a single time step---the splitting error and the VMS spatial error, together with the underlying BDF time-discretization error---in \secref{sec:single-step-error}, and combine them into a full \textit{a priori} estimate in \secref{sec:full-estimate}. Throughout, we assume that the exact solution is sufficiently regular and that the scheme is run in a stable regime, so that per-step errors are not amplified during time marching.

\subsection{Error in a single step}
\label{sec:single-step-error}
We now analyze the errors in a single time-step. At a fixed time $t^{n}$, we have the following solution tuples (see also \tabref{tab:velocity-notation} and \figref{fig:schematic-method-path}):
\begin{itemize}
	\item $\uvec(t^{n})$, the exact solution of the coupled
	NSE~\eqref{eq:nse},
	\item $(\uvecN,\pN)$, the solution of the time-discrete \emph{coupled}
	problem~\eqref{eq:nse-bdf} (BDF-$r$ in time, continuous in space),
	\item $(\uvech, \ph)$, the solution of the \emph{monolithic} VMS problem \eqref{eq:coupled-vms},
	\item $(\uvectN,\uvecSN,\phat^{\,n})$, the solution of the time-discrete
	\emph{projection} problem~\eqref{eq:chorin-temam-generic}--\eqref{eq:vu},
	with predictor $\uvectN$, corrected velocity $\uvecSN$, and
	pressure $\phatN$,
	\item $(\uvecth, \uvecSh, \phath)$, the solution of the VMS stabilized projection problem~\eqref{eq:chorin-temam-vms}.
\end{itemize}
Here, $\uvec(t^n)$ is continuous in both time and space; quantities with a superscript $n$ alone (e.g., $\uvecN$, $\uvectN$, $\uvecSN$) are discrete in time but continuous in space; and $\uvech$, $\uvecth$, $\uvecSh$ are discrete in both.

The total errors in the VMS-stabilized projection scheme \eqref{eq:chorin-temam-vms} are given by $\norm{\uvecSh-\uvec(t^n)}$ and $\norm{\phath - p(t^n)}$. Using the triangle inequality, we can decompose the velocity error as
\begin{equation}\label{eq:master-decomp}
	\norm{\uvec(t^{n})-\uvecSh}
	\;\le\;
	\underbrace{\norm{\uvec(t^{n})-\uvecN}}_{\text{time discretization}}
	\;+\;
	\underbrace{\norm{\uvecN-\uvecSN}}_{\text{splitting error}}
	\;+\;
	\underbrace{\norm{\uvecSN-\uvecSh}}_{\text{spatial VMS error}}.
\end{equation}
We quantify the three contributions in turn.

\subsubsection{Time discretization error}
The first term in~\eqref{eq:master-decomp} is the standard BDF-$r$
time-discretization error of the coupled problem, which is of order $O(\dt^{\,r})$
for sufficiently smooth solutions. For the BDF2 scheme used in this work, the
standard consistency estimate~\citep{butcher2016numerical} gives
\begin{align}
	\norm{\uvec(t^{n})-\uvecN} \leq C\, \dt^2.
\end{align}

\subsubsection{The splitting error}
\label{sec:splitting}
The second term in~\eqref{eq:master-decomp}, $\norm{\uvecN-\uvecSN}$, is the
error introduced by the projection splitting. This splitting acts on the
time-discrete coupled problem~\eqref{eq:nse-bdf} \emph{before} any spatial
discretization: with $\sigma=\beta_0/\dt$ and
$\Nop{\uvec}:=(\uvec\cdot\grad)\uvec$, the incremental
pressure-correction step computes a predictor
	\begin{equation}\label{eq:pred-td}
		\frac{1}{\dt}\Bigl(\beta_0\,\uvectN+\sum_{i=1}^{r}\beta_{-i}\,\hat{\uvec}^{\,n-i}\Bigr)
		+\Nop{\uvectN}-\visco\laplacian\uvectN+\grad\pstar=\fvecN,
		\qquad \pstar=\phat^{\,n-1},
	\end{equation}
	where $\hat{\uvec}^{\,n-i}$ denotes the corrected projection velocity from a previous time step,
	which for the BDF2 scheme used in this work reads
\begin{equation}\label{eq:pred-td-bdf2}
	\frac{3\uvectN-4\hat{\uvec}^{\,n-1}+\hat{\uvec}^{\,n-2}}{2\dt}
	+\Nop{\uvectN}-\visco\laplacian\uvectN+\grad\pstar=\fvecN,
\end{equation}
followed by the projection
\begin{equation}\label{eq:proj-td}
	\uvecSN=\uvectN-\tfrac1\sigma\grad\phi^{\,n},
	\qquad \phi^{\,n}:=\phat^{\,n}-\phat^{\,n-1},
	\qquad \divergence\uvecSN=0 .
\end{equation}
Because the VMS model enters only at the spatial-discretization stage
(\Cref{sec:spatial-error}), the split velocity $\uvecSN$ defined
by~\eqref{eq:pred-td}--\eqref{eq:proj-td} is precisely the solution of the
classical incremental (non-rotational) pressure-correction
scheme~\citep{goda1979multistep,vankan1986second}, whose splitting error
is established, in the continuous-in-space setting, by
\citet{shen1996error} and reviewed by \citet{guermond2006overview}; the
equivalent algebraic approximate-factorization viewpoint is due to
\citet{perot1993analysis}.

\begin{lemma}[Splitting error of the incremental scheme]\label{lem:rate}
	Let $(\uvectN,\uvecSN,\phat^{\,n})$ solve the incremental
	pressure-correction problem~\eqref{eq:pred-td}--\eqref{eq:proj-td}, and
	$(\uvecN,\pN)$ the coupled time-discrete problem~\eqref{eq:nse-bdf}, with the
	same data and sufficient regularity. Then
	\begin{equation}\label{eq:rate}
		\Bigl(\dt\sum_{n}\norm{\uvecN-\uvecSN}^{2}\Bigr)^{\halfnice}=O(\dt^{2}),
		\qquad
		\Bigl(\dt\sum_{n}\norm{\pN-\phat^{\,n}}^{2}\Bigr)^{\halfnice}=O(\dt).
	\end{equation}
\end{lemma}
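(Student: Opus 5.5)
We will prove the lemma by an energy argument on the difference between the split and coupled time-discrete solutions, following the classical route of \citet{shen1996error} and \citet{guermond2006overview}. Define the errors $\mvec{e}^n:=\uvecN-\uvecSN$, $\tilde{\mvec{e}}^n:=\uvecN-\uvectN$ and $\epsilon^n:=\pN-\phat^{\,n}$. Subtracting~\eqref{eq:pred-td} from~\eqref{eq:nse-bdf-mom}, with the BDF history in the predictor built from the corrected velocities $\hat{\uvec}^{\,n-i}$, gives an error equation for the predictor:
\begin{align*}
	\frac{1}{\dt}\Bigl(\beta_0\tilde{\mvec{e}}^n+\sum_{i=1}^{r}\beta_{-i}\mvec{e}^{n-i}\Bigr)
	-\visco\laplacian\tilde{\mvec{e}}^n
	+\bigl[\Nop{\uvecN}-\Nop{\uvectN}\bigr]
	=-\grad\bigl(\pN-\phat^{\,n-1}\bigr).
\end{align*}
The projection~\eqref{eq:proj-td} gives the companion relation $\sigma(\mvec{e}^n-\tilde{\mvec{e}}^n)=\grad(\phi^n-(\pN-\phat^{\,n-1}))$, i.e.\ $\sigma(\mvec{e}^n-\tilde{\mvec{e}}^n)+\grad\epsilon^n=\grad(\pN-\pN)$ up to rearrangement, together with $\divergence\mvec{e}^n=0$ and $\mvec{e}^n\cdot\normalvec=0$ on $\Gamma_D$.

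\textbf{Step 1 (energy estimate, first order).} Test the predictor error equation with $2\dt\,\tilde{\mvec{e}}^n$ and the projection relation with $\mvec{e}^n$ and with $\grad\epsilon^n$. We use the BDF2 identity $2(3a-4b+c,a)=|a|^2+|2a-b|^2-|b|^2-|2b-c|^2+|a-2b+c|^2$, the orthogonality $(\mvec{e}^n,\grad\epsilon^n)=0$, and the standard decomposition $\grad(\pN-\phat^{\,n-1})=\grad\epsilon^n+\grad(\phat^{\,n}-\phat^{\,n-1})$. The nonlinear difference is bounded by $C\|\grad\uvecN\|_{L^\infty}$-type constants times $\|\tilde{\mvec{e}}^n\|\,\|\grad\tilde{\mvec{e}}^n\|$ and absorbed by Young into $\visco\|\grad\tilde{\mvec{e}}^n\|^2$. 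Here we invoke the regularity assumption that $\uvecN$ is uniformly bounded in $W^{1,\infty}$. We also use an inductive $L^\infty$ bound on $\uvectN$ (or a truncation argument) to handle the skew part. A discrete Gronwall then gives $\dt\sum\|\grad\tilde{\mvec{e}}^n\|^2+\max\|\mvec{e}^n\|^2\lesssim\dt^2$ and $\dt\sum\|\epsilon^n\|^2$-type control at $O(\dt)$. The consistency residual driving everything is the pressure lag $\grad(\pN-p^{n-1})=O(\dt)$, which is where the first-order pressure rate in~\eqref{eq:rate} originates.

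\textbf{Step 2 (improved velocity rate).} To upgrade the velocity from $O(\dt)$ to $O(\dt^2)$ in $\ell^2(L^2)$, we use the Aubin--Nitsche-type duality argument of \citet{shen1996error}. We apply the same energy estimate to the time-differenced errors $\delta\mvec{e}^n=\mvec{e}^n-\mvec{e}^{n-1}$, which requires the time-discrete solution to have a smooth time derivative. This yields $\|\delta\mvec{e}^n\|=O(\dt^2)$ in $\ell^2(H^1)$ and the pressure bound $O(\dt)$ in $\ell^2(L^2)$, the latter via the inf-sup condition for the continuous Stokes pair applied to the error equation. Next we test with $\mathcal{S}^{-1}\mvec{e}^n$, where $\mathcal{S}$ is the Stokes operator and the solution is assumed $H^2$-regular, as the domain regularity requires. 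This converts the $O(\dt)$ pressure forcing into an $H^{-1}$-type quantity, giving one extra power of $\dt$ and hence $(\dt\sum\|\mvec{e}^n\|^2)^{1/2}=O(\dt^2)$.

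\textbf{Main obstacle.} We expect the main difficulty to be the duality step. In it the artificial boundary condition $\grad\phi^n\cdot\normalvec=0$ creates a numerical boundary layer, and this must be shown not to pollute the $L^2$ velocity error. This relies on the Stokes regularity and on the mixed boundary conditions of~\eqref{eq:nse-bc}, which may need $\Gamma_N=\emptyset$. The BDF2 stability in this step requires the $G$-norm identity above and a careful treatment of the nonlinearity at the negative-norm level. If full rigor proves difficult, we will state these steps as citing \citet{shen1996error} and \citet{guermond2006overview}, consistent with the formal character of this section.
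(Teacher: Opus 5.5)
The paper itself gives no argument for this lemma. It observes that \eqref{eq:pred-td}--\eqref{eq:proj-td} is the classical BDF2 incremental pressure-correction scheme, defers to \citet{shen1996error} and \citet{guermond2006overview}, and offers Remark~\ref{rem:increment} as the heuristic. Your plan is that classical argument: an energy estimate, a time-differenced estimate plus inf-sup for the pressure, and Stokes duality for the $\ell^2(L^2)$ velocity. The route therefore agrees, but your error equations are wrong exactly where the estimate is generated.

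From \eqref{eq:proj-td}, $\mvec{e}^n-\tilde{\mvec{e}}^n=\uvectN-\uvecSN=\sigma^{-1}\grad\phi^n$. Since $\phi^n=(\pN-p^{n-1})-(\epsilon^n-\epsilon^{n-1})$, this gives
\[
\sigma(\mvec{e}^n-\tilde{\mvec{e}}^n)+\grad(\epsilon^n-\epsilon^{n-1})=\grad(\pN-p^{n-1}).
\]
Your relation reduces to $\sigma(\mvec{e}^n-\tilde{\mvec{e}}^n)=-\grad\epsilon^n$. That is compatible with \eqref{eq:proj-td} only if $\grad\pN=\grad\phat^{\,n-1}$, and it deletes the data term. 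Likewise, $\pN-\phat^{\,n-1}=\epsilon^n+\phi^n$ merely restates the projection. The splitting that exposes the residual $\grad(\pN-p^{n-1})=O(\dt)$, which you correctly name as the driver, is $\pN-\phat^{\,n-1}=\epsilon^{n-1}+(\pN-p^{n-1})$.

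Two further corrections are needed.
\begin{itemize}
\item Step~1 yields only $\dt$-weighted control of $\grad\epsilon^n$, not an $\ell^2(L^2)$ pressure bound. That bound needs the time-differenced estimate and inf-sup.
\item The duality gain is not an $H^{-1}$ conversion of the pressure forcing. Adding predictor and projection gives a BDF2 equation for $\mvec{e}^n$ with viscous term $-\visco\laplacian\tilde{\mvec{e}}^n$ and pressure $\grad\epsilon^n$. Test it with the solenoidal, trace-free Stokes solution $\mvec{w}^n$, where $-\laplacian\mvec{w}^n+\grad r^n=\mvec{e}^n$. All gradient terms drop out. The viscous term must be integrated by parts onto $\tilde{\mvec{e}}^n$, which vanishes on the boundary, whereas $\mvec{e}^n$ has a nonzero tangential trace. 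Besides $\visco\norm{\mvec{e}^n}^2$, what remains is $\visco\sigma^{-1}\inner{\grad\phi^n}{\grad r^n}$. The extra power is therefore $\sigma^{-1}\norm{\grad\phi^n}$ with $\norm{\grad\phi^n}=O(\dt)$ in $\ell^2$ in time, which is exactly Remark~\ref{rem:increment}. This is why the time-differenced estimate must precede the duality step.
\end{itemize}

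The second, more substantive gap is the boundary, and it already enters Step~1. The orthogonality $\inner{\mvec{e}^n}{\grad\epsilon^n}=0$ needs $\mvec{e}^n\cdot\normalvec=0$ on all of $\partial\spDom$, or $\epsilon^n=0$ where that fails. You only have $\mvec{e}^n\cdot\normalvec=0$ on $\Gamma_D$. On an open boundary the split scheme sets $\phi^n=0$ (the outlet condition $p-p^*=0$ in the cylinder test). This freezes $\phat^{\,n}$ on $\Gamma_N$ at its initial trace. In general both $\epsilon^n$ and $\mvec{e}^n\cdot\normalvec$ are then nonzero on $\Gamma_N$, and $\int_{\Gamma_N}(\mvec{e}^n\cdot\normalvec)\,\epsilon^n$ survives in the energy identity. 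The rates you are reproducing are proved for pure Dirichlet data. Your argument, like the paper's citation, therefore gives \eqref{eq:rate} only for $\Gamma_N=\emptyset$; for the mixed conditions \eqref{eq:nse-bc} the lemma needs that restriction or a separate analysis.

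Finally, the cited estimates are against $(\uvec(t^n),p(t^n))$. The cleanest way to reach the lemma as stated is therefore the triangle inequality with the coupled BDF2 error. Your direct comparison with $(\uvecN,\pN)$ also works, but it requires uniform bounds on discrete time differences of the coupled time-discrete solution, which should be listed among the regularity assumptions.
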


The first-order pressure rate is intrinsic to the non-rotational
incremental scheme: the projection~\eqref{eq:proj-td} imposes the
artificial homogeneous-Neumann condition
$\grad\phi^{\,n}\cdot\hat{\mvec{n}}=0$ on the pressure increment, whose
boundary layer limits the pressure accuracy; the rotational variant
recovers $O(\dt^{3/2})$~\citep{guermond2006overview}.

Combining \Cref{lem:rate} with the $O(\dt^{2})$ BDF2 time-discretization error
of the coupled problem gives
\[
	\left(\dt\sum_n\norm{\uvec(t^{n})-\uvecSN}^{2}\right)^{\halfnice}
	=O(\dt^{2})
\]
for the velocity, consistent with the temporal convergence reported in
\Cref{sec:results}.

\begin{remark}[Role of the pressure increment]\label{rem:increment}
	The splitting defect carries one factor $\sigma^{-1}=O(\dt)$ from the
	projection~\eqref{eq:proj-td} and a second factor from the pressure increment
	$\phi^{\,n}$. The incremental choice $\pstar=\phat^{\,n-1}$ renders
	$\phi^{\,n}=O(\dt)$, so the two combine to $O(\dt^{2})$, whereas the
	non-incremental choice leaves $\phi^{\,n}=O(1)$ and only $O(\dt)$ remains.
	This is the continuous counterpart of the algebraic statement that the
	projection replaces the momentum-operator inverse by a scaled mass-matrix
	inverse~\citep{perot1993analysis}.
\end{remark}

\subsubsection{The spatial discretization error}
\label{sec:spatial-error}

The third term in~\eqref{eq:master-decomp}, $\norm{\uvecSN-\uvecSh}$, is
the error incurred by the VMS \emph{spatial} discretization of the split
problem; both $\uvecSN$ and $\uvecSh$ correspond to the same time level
and the same fractional-step equations, so this term contains no temporal
contribution. Because the projection replaces the velocity-pressure saddle
point by the sequence~\eqref{eq:chorin-temam-vms}---a nonlinear
advection--diffusion--reaction predictor, a pressure Poisson equation, and an
$L^{2}$ projection---the spatial error depends on the discretization of each
subproblem and on the propagation of errors through the sequence.

\begin{assumption}[Spatial approximation of the fractional subproblems]\label{ass:spatial}
	Let $q$ denote the spatial convergence order attained by the stabilized
	discretization in the $L^{2}$ norm; its value depends on the approximation
	spaces, solution regularity, stabilization model, and boundary conditions.
	We assume that the VMS discretizations of the three fractional subproblems
	are stable and consistent and that the total predicted velocity and pressure
	correction satisfy
	\begin{equation}\label{eq:spatial-rate}
		\norm{\uvectN-(\uvecth+\uprimet)}
		+\frac{1}{\sigma}\norm{\grad(\phat^{\,n}-\phath)}
		\leq C h^{q}.
	\end{equation}
	This assumption includes the effects of the modeled velocity fine scale,
	the approximation of the extrapolated pressure, and the boundary
	contributions.
\end{assumption}

Subtracting the continuous and discrete velocity-update equations and applying
the triangle inequality together with Assumption~\ref{ass:spatial} gives
\begin{equation}\label{eq:spatial-velocity-rate}
	\norm{\uvecSN-\uvecSh}\leq C h^{q}.
\end{equation}

\subsection{The full error estimate}
\label{sec:full-estimate}

Combining the estimates of \Cref{sec:splitting,sec:spatial-error} through
the decomposition~\eqref{eq:master-decomp} yields
the total error of the scheme.

\begin{proposition}[Formal error estimate of the VMS-stabilized projection scheme]\label{thm:full}
	Under the regularity, stability, and spatial-approximation assumptions
	above, the fully discrete incremental VMS-stabilized projection
	solution~\eqref{eq:chorin-temam-vms} obtained with BDF2 satisfies
	\begin{equation}\label{eq:full-estimate}
		\Bigl(\dt\sum_{n}\norm{\uvec(t^{n})-\uvecSh}^{2}\Bigr)^{\halfnice}
		\;=\; O\!\left(\dt^{2}+h^{q}\right).
	\end{equation}
\end{proposition}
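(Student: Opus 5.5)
The plan is to work directly from the triangle-inequality decomposition \eqref{eq:master-decomp}, applied at each time level $t^n$, and then pass to the discrete $\ell^2(L^2)$ norm in time. Squaring \eqref{eq:master-decomp} and using $(a+b+c)^2\le 3(a^2+b^2+c^2)$, multiplying by $\dt$ and summing over $n$ gives
\begin{align*}
\dt\sum_n\norm{\uvec(t^n)-\uvecSh}^2 \le 3\,\dt\sum_n\norm{\uvec(t^n)-\uvecN}^2 + 3\,\dt\sum_n\norm{\uvecN-\uvecSN}^2 + 3\,\dt\sum_n\norm{\uvecSN-\uvecSh}^2 .
\end{align*}
Taking square roots and using $\sqrt{a+b+c}\le\sqrt a+\sqrt b+\sqrt c$ reduces \eqref{eq:full-estimate} to bounding each of the three discrete time-averaged contributions separately.

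For the first term I will invoke the BDF2 consistency estimate $\norm{\uvec(t^n)-\uvecN}\le C\dt^2$ uniformly in $n$. This yields $\dt\sum_n \norm{\uvec(t^n)-\uvecN}^2\le C^2\dt^4\,(N\dt)=C^2T\dt^4$, with $T$ the final time. Its square root is therefore $O(\dt^2)$.

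The second term is exactly the quantity controlled by \Cref{lem:rate}, whose velocity estimate \eqref{eq:rate} gives $O(\dt^2)$ directly. No further argument is needed there, since the lemma is already stated in the $\ell^2$-in-time norm.

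For the third term I will use \eqref{eq:spatial-velocity-rate}, which follows from \Cref{ass:spatial}. The derivation runs as follows. Subtract the discrete update \eqref{eq:chorin-temam-vms-vue} from the continuous one \eqref{eq:nse-bdf-vue-galerkin}, and test with $\wvec=\uvecSN-\uvecSh$ (or note that both updates are pointwise identities in $L^2$). This gives
\[
\norm{\uvecSN-\uvecSh}\le\norm{\uvectN-(\uvecth+\uprimet)}+\sigma^{-1}\norm{\grad(\phatN-\phath)}+\sigma^{-1}\norm{\grad(\pstar-\pstar_h)} .
\]
The first two pieces are bounded by $Ch^q$ by \eqref{eq:spatial-rate}. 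The extrapolated-pressure piece is absorbed into the same assumption, which explicitly includes the approximation of $\pstar$; alternatively it is the previous step's pressure-correction error and so is also $O(h^q)$, with $\sigma^{-1}=O(\dt)$ only helping. The bound must hold with $C$ independent of $n$. Then $\dt\sum_n\norm{\uvecSN-\uvecSh}^2\le C^2Th^{2q}$, whose square root is $O(h^q)$.

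Adding the three bounds gives \eqref{eq:full-estimate}. The main obstacle, and the reason the result is labelled formal, is the uniformity of the spatial constant across time steps. Errors in $\uvecSh$ and $\phath$ feed into the next step through $\gvecN$ and $\pstar_h$, so a genuine proof would require a discrete Gronwall argument based on stability of the nonlinear VMS predictor. Here I would lean on the standing assumption that the scheme runs in a stable regime with no amplification of per-step errors, together with \Cref{ass:spatial}. I would state explicitly that the constant $C$ in \eqref{eq:spatial-rate} is taken uniform in $n$ and in $\dt$, which is precisely the content of that stability hypothesis.
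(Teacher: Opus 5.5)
Your proposal is correct and follows the paper's proof. Both use the triangle-inequality decomposition \eqref{eq:master-decomp}, the $O(\dt^2)$ BDF2 bound, \Cref{lem:rate} for the splitting term, \eqref{eq:spatial-velocity-rate} for the spatial term, and the standing stability (no-amplification) hypothesis to justify summing the per-step errors. Your version is somewhat more careful than the paper's. You square before summing, so the $\ell^2$-in-time form of \Cref{lem:rate} is used exactly as stated, and you explicitly account for the extrapolated-pressure contribution $\sigma^{-1}\norm{\grad(\pstar-\pstar_h)}$ via the previous step's instance of \Cref{ass:spatial}.
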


\begin{proof}
	The decomposition~\eqref{eq:master-decomp} bounds the velocity error by its
	three contributions. The time-discretization term $\norm{\uvec(t^{n})-\uvecN}$
	is the $O(\dt^{2})$ BDF2 error of the coupled problem; the splitting term
	$\norm{\uvecN-\uvecSN}$ is $O(\dt^{2})$ by \Cref{lem:rate}; and the spatial VMS
	term $\norm{\uvecSN-\uvecSh}$ is $O(h^{q})$ by
	\eqref{eq:spatial-velocity-rate}. Under the
	standing stability assumption---the discrete energy of the scheme is
	controlled by its data, so that per-step consistency errors accumulate
	without amplification in order (a discrete Gr\"onwall
	argument~\citep{heywood1990finite})---summation of the per-step
	contributions gives~\eqref{eq:full-estimate}.
\end{proof}

\begin{remark}[Pressure and the role of each ingredient]\label{rem:full}
	For the pressure, the non-rotational incremental splitting gives the
	general temporal estimate $O(\dt)$ in the discrete $L^{2}$-in-time norm;
	a complete spatial pressure estimate is not pursued here. The rotational
	variant would improve the temporal pressure estimate to
	$O(\dt^{3/2})$~\citep{guermond2006overview}. The estimate~\eqref{eq:full-estimate} also makes explicit
	that the second-order temporal rate of the velocity requires both BDF2
	and the incremental pressure correction of \Cref{rem:increment}; with
	the non-incremental variant, the splitting error is only $O(\dt)$.
\end{remark}

\section{Results}
\label{sec:results}
We now examine the proposed formulation through a set of increasingly challenging numerical examples. These include -- (i) convergence tests with a manufactured solution, (ii) lid-driven cavity problem in 2D, (iii) flow past a cylinder in 2D and (iv) Taylor-Green flows in 3D.

\subsection{Method of Manufactured solutions}

\begin{figure}[t]
	\centering
	\begin{minipage}{.45\textwidth}
		\centering
		\begin{tikzpicture}
			\begin{loglogaxis}[
				width=0.99\linewidth, 
				xlabel=$\nicefrac{dt}{T}$,
				legend style={at={(0.01,0.99)},anchor=north west,legend columns=1}, 
				x tick label style={rotate=0,anchor=north}, 
				xtick={1/256, 1/128, 1/64, 1/32, 1/16},
				xticklabels={$2^{-8}$,$2^{-7}$,$2^{-6}$,$2^{-5}$,$2^{-4}$},
				]
				\addplot+[very thick] table[
				x expr={(1.0 / \thisrow{nsteps})},
				y expr={sqrt(\thisrow{err_x}^2 + \thisrow{err_y}^2)},
				col sep=comma
				]{data/mmsHarmonic/re_1e+03.txt};
				
				\addplot+[thick, dashed] table[
				x expr={(1.0 / \thisrow{nsteps})},
				y expr={sqrt(\thisrow{err_x}^2 + \thisrow{err_y}^2)},
				col sep=comma
				]{data/mmsHarmonic/re_1e+04.txt};
				
				\addplot+[thick, dashed] table[
				x expr={(1.0 / \thisrow{nsteps})},
				y expr={sqrt(\thisrow{err_x}^2 + \thisrow{err_y}^2)},
				col sep=comma
				]{data/mmsHarmonic/re_1e+05.txt};
				
				\addplot+[thick, dotted] table[
				x expr={(1.0 / \thisrow{nsteps})},
				y expr={sqrt(\thisrow{err_x}^2 + \thisrow{err_y}^2)},
				col sep=comma
				]{data/mmsHarmonic/re_1e+06.txt};
				
				\logLogSlopeTriangle{0.8}{0.2}{0.2}{2}{blue}
				\legend{{\tiny $Re = 10^3$}, {\tiny $Re = 10^4$}, {\tiny $Re = 10^5$}, {\tiny $Re = 10^6$}}
			\end{loglogaxis}
		\end{tikzpicture}
		\subcaption{$e_u(T)$.}
		\label{fig:nonlinear-s-1.0-mmsShen-velocity}
	\end{minipage}
	\hfill
	\begin{minipage}{.45\textwidth}
		\centering
		\begin{tikzpicture}
			\begin{loglogaxis}[
				width=0.99\linewidth, 
				xlabel=$\nicefrac{dt}{T}$,
				legend style={at={(0.01,0.99)},anchor=north west,legend columns=1}, 
				x tick label style={rotate=0,anchor=north}, 
				xtick={1/256, 1/128, 1/64, 1/32, 1/16},
				xticklabels={$2^{-8}$,$2^{-7}$,$2^{-6}$,$2^{-5}$,$2^{-4}$},
				]
				\addplot+[very thick] table[
				x expr={(1.0 / \thisrow{nsteps})},
				y expr={\thisrow{err_p}},
				col sep=comma
				]{data/mmsHarmonic/re_1e+03.txt};
				
				\addplot+[thick, dashed] table[
				x expr={(1.0 / \thisrow{nsteps})},
				y expr={\thisrow{err_p}},
				col sep=comma
				]{data/mmsHarmonic/re_1e+04.txt};
				
				\addplot+[thick, dashed] table[
				x expr={(1.0 / \thisrow{nsteps})},
				y expr={\thisrow{err_p}},
				col sep=comma
				]{data/mmsHarmonic/re_1e+05.txt};
				
				\addplot+[thick, dotted] table[
				x expr={(1.0 / \thisrow{nsteps})},
				y expr={\thisrow{err_p}},
				col sep=comma
				]{data/mmsHarmonic/re_1e+06.txt};
				
				\logLogSlopeTriangle{0.8}{0.2}{0.2}{2}{blue}
				\legend{{\tiny $Re = 10^3$}, {\tiny $Re = 10^4$}, {\tiny $Re = 10^5$}, {\tiny $Re = 10^6$}}
			\end{loglogaxis}
		\end{tikzpicture}
		\subcaption{$e_p(T)$.}
		\label{fig:nonlinear-s-1.0-mmsShen-pressure}
	\end{minipage}
	\caption{Convergence of velocity and pressure errors, $T=2.25$.}
	\label{fig:mmsShen-projection}
\end{figure}

We begin by testing the convergence of the proposed method as the time-step size $\dt$ is reduced. We take the exact solution $ (\uvec, p) $ to be

\begin{subequations}\label{eq:mms-1}
    \begin{align}
        \bm{u} &= 
        \begin{bmatrix}
        \sin(\pi x) \cos(\pi y) \sin(2 \pi t) \\
        -\cos(\pi x) \sin(\pi y) \sin(2 \pi t) \\
        \end{bmatrix},\\
    	p &= \sin(\pi x) \sin(\pi y) \cos(2 \pi t),
    \end{align}
\end{subequations}

in a unit square domain, i.e., $\Omega = {[0,1]}^2$ and time interval $I_T=[0,T]$. We set $T=2.25$, since the velocities reach a peak at $t=n + \tfrac{1}{4}$ for $n\in\{0,1,2,\ldots\}$. The velocity $\uvec$ is chosen such that it satisfies $ \grad\cdot\uvec = 0 $ exactly pointwise. We obtain the analytical expression for the forcing $ \fvec $ by substituting the exact solutions $(\uvec, p)$ in~\eqref{eq:nse}. We also analytically obtain the initial condition $\uvec_0$ by setting $t=0$ in~\eqref{eq:mms-1}, and the boundary condition $\uvec_d$ on all sides by setting $\xvec = (x,y)$ appropriately. To uniquely determine the pressure in the discrete system, we apply a Dirichlet condition given by
\begin{align}
	\ph = 0 \  \text{at}\ (0,0).
\end{align}

We solve the incompressible Navier-Stokes equations \eqref{eq:nse} using the
proposed VMS-stabilized projection scheme \eqref{eq:chorin-temam-vms}, with the
manufactured forcing, initial condition, and boundary data derived above. To
assess the robustness of the temporal convergence across flow regimes, the test
is repeated for a range of Reynolds numbers, $Re \in \{10^3, 10^4, 10^5, 10^6\}$.
We measure the error in $(\uvecSh, \ph)$ with respect to the exact solution $(\uvec, p)$ using
\begin{align}
	e_u(t) &= \left[ \int_{\spDom} \big| \uvecSh(t) - \uvec(t) \big|^2\ \mbox{d}\xvec\right]^{\halfnice}, \\
	e_p(t) &= \left[ \int_{\spDom} (\phath - p)^2\ \mbox{d}\xvec\right]^{\halfnice}.
\end{align}
\figref{fig:mmsShen-projection} shows the convergence of these errors with respect to the step size $\dt$. Across all the tested Reynolds numbers, both errors exhibit approximately second-order temporal convergence. For the pressure, this observed rate is higher than the general first-order estimate for the non-rotational scheme, which need not be sharp for this smooth manufactured problem and its boundary conditions.

\subsection{Lid driven cavity}

\begin{figure}[!htb]
\centering
\begin{tikzpicture}[scale=4]

\draw[thick] (0,0) -- (1,0) -- (1,1) -- (0,1) -- cycle;

\node[align=center] at (0.5,-0.20) {$y=0:$ $\boldsymbol{u} = \boldsymbol{0}$,\\$\nabla(p-p^*)\cdot\hat{\boldsymbol{n}}=0$};   
\node[align=center] at (0.5,1.20) {$y=1:$ $\boldsymbol{u} = (1,0)^{T}$,\\$\nabla(p-p^*)\cdot\hat{\boldsymbol{n}}=0$};  
\node[align=center] at (-0.4,0.5) {$x=0:$ $\boldsymbol{u} = \boldsymbol{0}$,\\$\nabla(p-p^*)\cdot\hat{\boldsymbol{n}}=0$};   
\node[align=center] at (1.4,0.5) {$x=1:$ $\boldsymbol{u} = \boldsymbol{0}$,\\$\nabla(p-p^*)\cdot\hat{\boldsymbol{n}}=0$};    

\draw[dashed, very thick, blue!90] (0.5,0) -- (0.5,1);
\draw[dashed, very thick, blue!90] (0,0.5) -- (1,0.5);
\node[blue!90] at (0.55,0.15) {$A$};
\node[blue!90] at (0.15,0.55) {$B$};

\node at (-1.1,0.5) {$\Omega=[0,1]^2$};

\draw[dotted,very thick] (0,0) circle(0.07);

\fill (0,0) circle(0.015);

\node[align=center] at (-0.4,-0.25) (pzero) {$\textbf{x}=\textbf{0}$,\\$p=0$};
\draw[->,thick,>=stealth,line width=1pt] (pzero) -- (0,0);

\def\axOx{-1}
\def\axOy{0}
\def\axL{0.2}
\draw[very thick,->,>=stealth] (\axOx,\axOy) -- (\axOx+\axL,\axOy);
\draw[very thick,->,>=stealth] (\axOx,\axOy) -- (\axOx,\axOy+\axL);
\node at (\axOx+\axL,\axOy) [right] {$x$};
\node at (\axOx,\axOy+\axL) [left] {$y$};

\end{tikzpicture}
\caption{Boundary conditions for the lid-driven cavity problem. The blue dashed lines are the midlines where velocities will be evaluated and plotted (see \figref{fig:ldc-re-all-panel-1})}
\label{fig:schematic-ldc2d}
\end{figure}

\input{supporting_tex/ldc_line_cuts.tex}

\figref{fig:schematic-ldc2d} shows a schematic of the lid-driven cavity problem in 2D. The computational domain is $\spDom = [0,1]^2$. The boundary conditions are static (do not change with time) and are given by
\begin{subequations}
	\begin{align}
		\uvec &= \zerovec,\ y=0,\\
		\uvec &= [1,0]^T,\ y=1,\\
		\uvec &= \zerovec\ x\in\{0,\ 1\}.
	\end{align}
\end{subequations}

The boundary condition comprise a ``lid'' at the top moving with horizontal velocity $=1$, and no-slip walls on the rest of the three sides. The forcing $\fvec = \zerovec$, and the initial condition is given by a zero function
\begin{align}
	\uvec_0 = \zerovec.
\end{align}
We solve this problem for a series of Reynolds numbers $ Re = $ 100, 1000, 5000 and 10000. The benchmark results for this problem generally comprise of steady-state solutions. Here, we choose $ T = 5\times 10^3 $ for all cases, such that a steady-state is achieved. We discretize the domain with a $128\times 128$ axis-aligned mesh of equal-order $Q_1/Q_1$ elements, graded to refine toward the boundaries.

\figref{fig:ldc-re-all-panel-1} shows the line cuts of the velocities at steady state. \figref{fig:ldc-re-ux} shows the $x$-component of velocity along the vertical midline ($x=0.5$, line A in \figref{fig:schematic-ldc2d}), and \figref{fig:ldc-re-uy} shows the $y$-component along the horizontal midline ($y=0.5$, line B in \figref{fig:schematic-ldc2d}). For comparison, we also include results from the monolithic VMS solver ($Q_1/Q_1$) and from an inf-sup-stable $Q_2/Q_1$ Taylor--Hood pair with SUPG stabilization, both run on the same mesh. Reference values from Ghia et al.\ are overlaid as well. The match between the projection-based and monolithic VMS solvers at all Reynolds numbers confirms that the projection-based splitting does not degrade steady-state accuracy.
\begin{figure}[!htb]
\centering
\begin{subfigure}{0.32\textwidth}
    \centering
    \begin{overpic}[width=\textwidth,trim={0.0cm 3.5cm 0.0cm 0.0cm},clip]
        {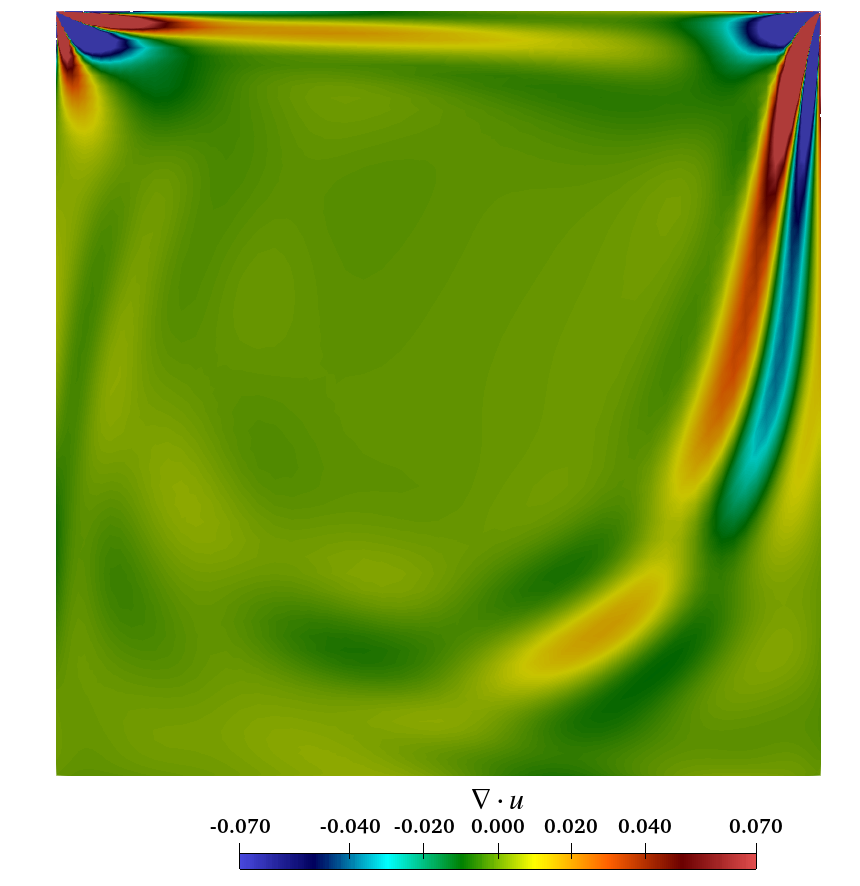}
        \put(78,27){\textcolor{black}{\dashbox{2}(20,65){}}}
    \end{overpic}
\caption{$\nabla \cdot \uvech$}
\end{subfigure}
\begin{subfigure}{0.32\textwidth}
    \centering
    \begin{overpic}[width=\textwidth,trim={0.0cm 3.5cm 0.0cm 0.0cm},clip]
        {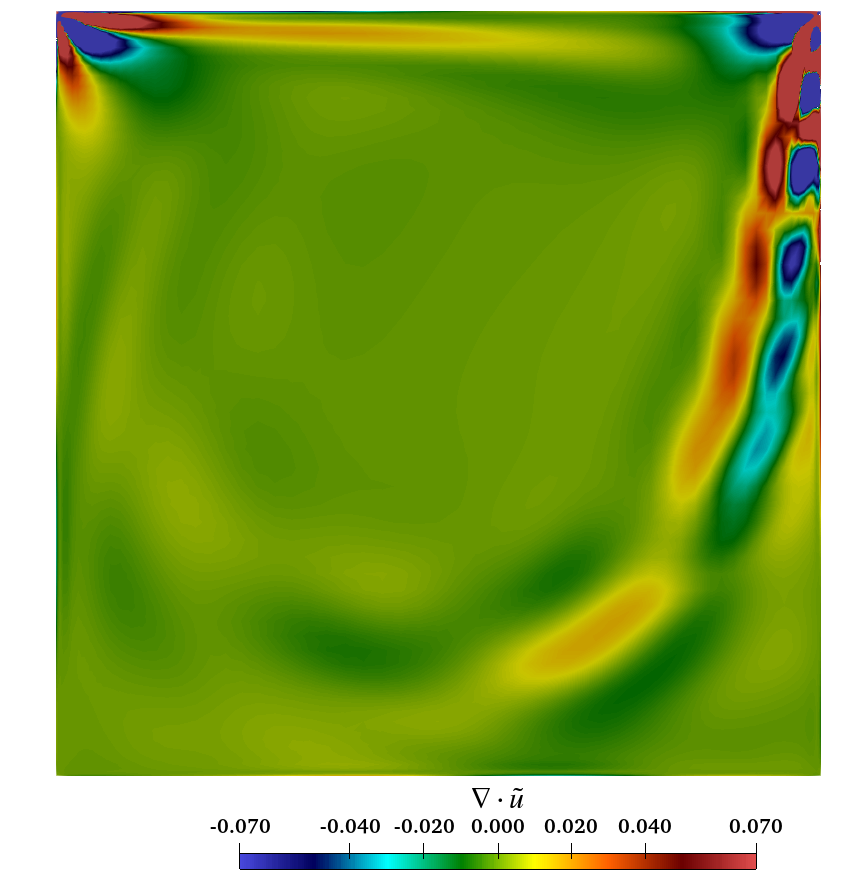}
        \put(78,27){\textcolor{black}{\dashbox{2}(20,65){}}}
    \end{overpic}
\caption{$\nabla \cdot \uvecth$ }
\end{subfigure}
\begin{subfigure}{0.32\textwidth}
    \centering
    \begin{overpic}[width=\textwidth,trim={0.0cm 3.5cm 0.0cm 0.0cm},clip]
        {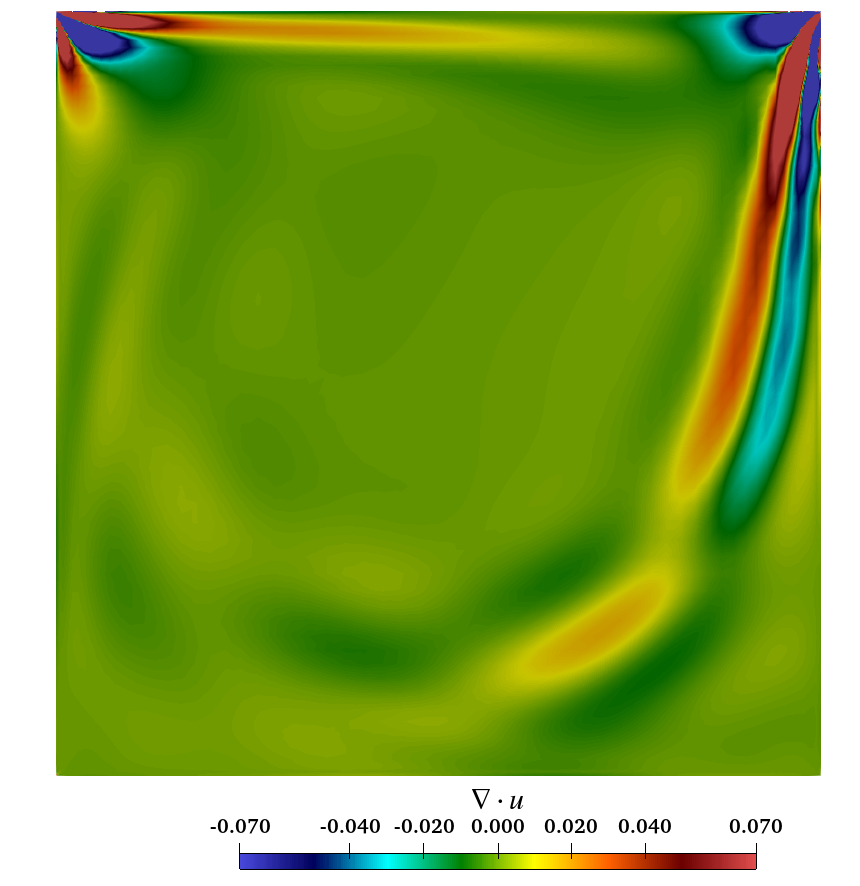}
        \put(78,27){\textcolor{black}{\dashbox{2}(20,65){}}}
    \end{overpic}
\caption{$\nabla \cdot \uvecSh$ }
\end{subfigure}
\\
\vspace{0.3cm}
\begin{subfigure}{\textwidth}
    \centering
    \begin{tikzpicture}
        \begin{axis}[
            hide axis,
            scale only axis,
            height=0pt,
            width=0pt,
            colormap={rainbowdesat}{
                rgb=(0.278, 0.278, 0.859)
                rgb=(0.000, 0.000, 0.360)
                rgb=(0.000, 1.000, 1.000)
                rgb=(0.000, 0.502, 0.000)
                rgb=(1.000, 1.000, 0.000)
                rgb=(1.000, 0.380, 0.380)
                rgb=(0.420, 0.000, 0.000)
            },
            point meta min=-0.07,
            point meta max=0.07,
            colorbar horizontal,
            colorbar style={
                width=8cm,
                height=0.35cm,
                xtick={-0.07,-0.04,-0.02,0.0,0.02,0.04,0.07},
                scaled x ticks=false,
                xticklabel style={font=\small,
                                  /pgf/number format/.cd,
                                  fixed,
                                  fixed zerofill,
                                  precision=2,
                                  /tikz/.cd},
                title style={font=\small, yshift=-1ex},
            },
        ]
        \addplot [draw=none] coordinates {(0,0) (1,1)};
        \end{axis}
    \end{tikzpicture}
\end{subfigure}
\caption{An example of divergence fields at steady state for the lid-driven cavity at $Re=1000$: (a) VMS monolithic, (b) VMS/projection: after prediction (intermediate velocity), (c) VMS/projection: after projection (updated velocity). Dashed boxes mark the region of interest near the top-right corner singularity.}
\label{fig:divergenceofvelocity}
\end{figure}

\begin{figure}[!htb]
\centering
\begin{subfigure}{0.4\textwidth}
    \centering
\includegraphics[width=\textwidth,trim={0.0cm 3.5cm 0.0cm 0.0cm},clip]{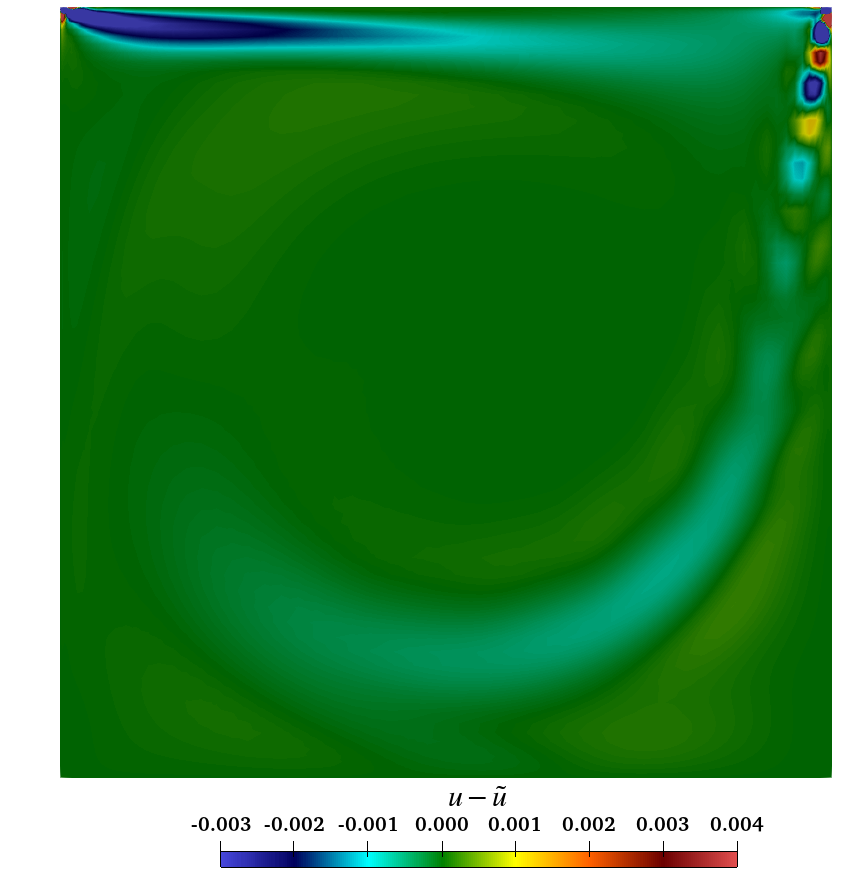}
\caption{$(\uvecth - \uvecSh)_x$ }
\end{subfigure}
\begin{subfigure}{0.4\textwidth}
    \centering
\includegraphics[width=\textwidth,trim={0.0cm 3.5cm 0.0cm 0.0cm},clip]{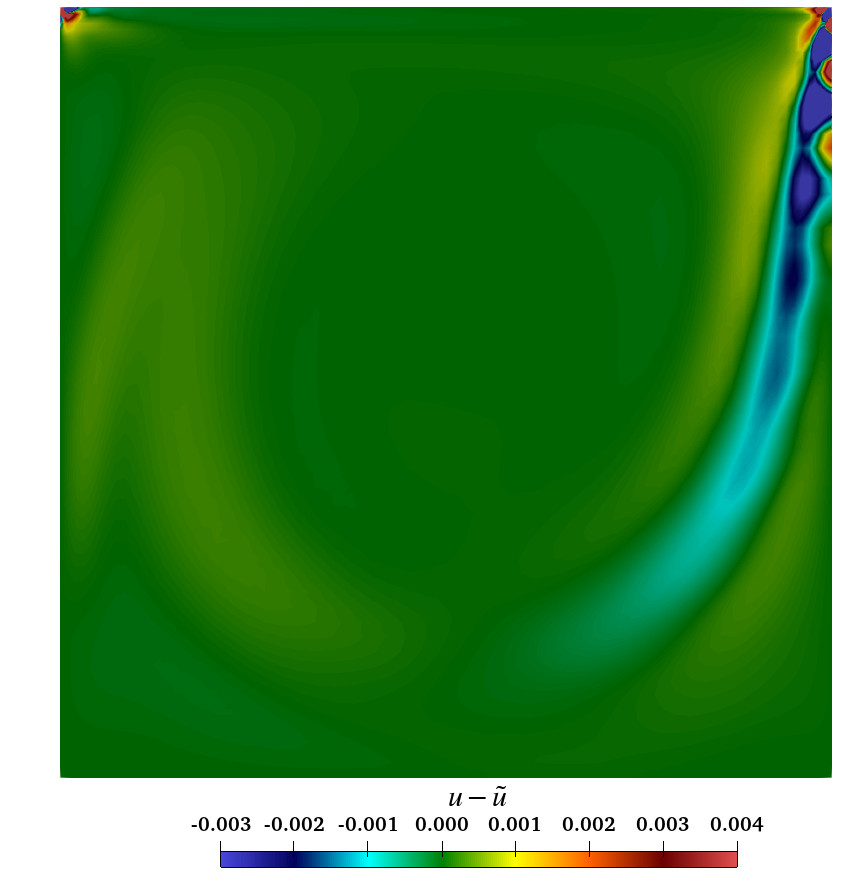}
\caption{$(\uvecth - \uvecSh)_y$ }
\end{subfigure}
\\
\vspace{0.3cm}
\begin{subfigure}{\textwidth}
    \centering
    \begin{tikzpicture}
        \begin{axis}[
            hide axis,
            scale only axis,
            height=0pt,
            width=0pt,
            colormap={rainbowdesat}{
                rgb=(0.278, 0.278, 0.859)
                rgb=(0.000, 0.000, 0.360)
                rgb=(0.000, 1.000, 1.000)
                rgb=(0.000, 0.502, 0.000)
                rgb=(1.000, 1.000, 0.000)
                rgb=(1.000, 0.380, 0.380)
                rgb=(0.420, 0.000, 0.000)
            },
            point meta min=-0.003,
            point meta max=0.004,
            colorbar horizontal,
            colorbar style={
                width=8cm,
                height=0.35cm,
                xtick={-0.003,-0.002,-0.001,0.0,0.001,0.002,0.003,0.004},
                scaled x ticks=false,
                xticklabel style={font=\small,
                                  /pgf/number format/.cd,
                                  fixed,
                                  fixed zerofill,
                                  precision=3,
                                  /tikz/.cd},
                title style={font=\small, yshift=-1ex},
            },
        ]
        \addplot [draw=none] coordinates {(0,0) (1,1)};
        \end{axis}
    \end{tikzpicture}
\end{subfigure}
\caption{Velocity correction applied by the projection step for the lid-driven cavity at $Re=1000$ at steady state. (a) $x$-component, $(\uvectN - \uvecSN)_x$. (b) $y$-component, $(\uvectN - \uvecSN)_y$. Both components are largest near the singular top corners of the cavity.}
\label{fig:velocitycorrection}
\end{figure}

\figref{fig:divergenceofvelocity} compares the divergence of the three discrete velocities --- (i) the monolithic velocity solution $\uvech$, (ii) the projection-based intermediate velocity $\uvecth$ and (iii) the projection-based updated velocity $\uvecSh$ --- at $Re=10^3$. The divergence of $\uvech$ is small throughout the cavity, with higher magnitudes prominent near the singular top corners where the moving lid meets the no-slip walls, and smaller magnitudes along the walls and inside the recirculation regions. The divergence of the 
intermediate velocity $\uvecth$ from the projection scheme exhibits a similar pattern but there are some visible differences, since solenoidality is not enforced at the
predictor stage. After the $L^2$-projection step, the corrected velocity
$\uvecSh$ has a divergence that is comparable in
pattern to that of the monolithic velocity, confirming that the projection step
recovers weak incompressibility at the same level as the monolithic
formulation. \figref{fig:velocitycorrection} shows the components
of the velocity correction $(\uvecth - \uvecSh)$ for this case. The corrections are seen to be the most pronounced near the corner singularities but also exhibit smaller, distributed contributions along the cavity walls and within the primary vortex.

\subsection{Flow past cylinder}

\begin{figure}[!htb]
\centering
\begin{tikzpicture}[scale=4]

\draw[thick] (0,0) rectangle (3,1);

\node at (1.5,-0.20)[align=center] {$y=0$:\\ $\boldsymbol{u}=\boldsymbol{0}$,\\$\nabla(p-p^*)\cdot\hat{\boldsymbol{n}}=0$};   
\node at (1.5,1.20)[align=center] {$y=H$:\\ $\boldsymbol{u}=\boldsymbol{0}$,\\$\nabla(p-p^*)\cdot\hat{\boldsymbol{n}}=0$};    
\node at (-0.25,0.5) [rotate=90, align=center] {$x=0$:\\$\boldsymbol{u}=[\gamma(y),0]^T$,\\$\nabla(p-p^*)\cdot\hat{\boldsymbol{n}}=0$};   
\node at (3.2,0.5) [rotate=90, align=center] {$x=L$:\\ $\hat{\boldsymbol{n}}\cdot\nabla\boldsymbol{u} = \boldsymbol{0}$,\\$p-p*=0$};    

\draw[thick] (1.0,0.5) circle(0.2);

\fill (1.0,0.5) circle(0.015);

\node[anchor=north west,inner sep=1pt] at (-0.1,-0.02) {$(0,0)$};

\node at (1.45,0.3)[align=center] {cylinder:\\$\boldsymbol{u}=\boldsymbol{0}$,\\$\nabla(p-p^*)\cdot\hat{\boldsymbol{n}}=0$};

\draw[thick,->,>=stealth] (1.0,0.5) -- (1.0+0.141,0.5+0.141);
\draw[thick,->,>=stealth] (1.0,0.5) -- (1.0-0.141,0.5-0.141);

\node at (1.0+0.25,0.5+0.24) {$d=1$};

\def\xvert{0.0}

\draw[very thick,blue!90] (\xvert,0) -- (\xvert,1);

\fill[blue!50,opacity=0.5]
    (\xvert,0)
    \foreach \y in {0,0.01,...,1} {
        -- ({\xvert + 0.25*4*\y*(1-\y)},\y)
    }
    -- (\xvert,1) -- (\xvert,0);

\draw[very thick,blue!90]
    plot[domain=0:1,samples=100] 
    ({\xvert + 0.25*4*\x*(1-\x)},\x);

\foreach \y in {0.08,0.16,...,1} {
    \pgfmathsetmacro\xparabola{\xvert + 0.25*4*\y*(1-\y)}
    \draw[very thick,->,>=stealth,blue!90] (\xvert,\y) -- (\xparabola,\y);
}
\node at (0.3,0.15) [very thick]{$\gamma(y)$};

\def\axO{-0.25}
\def\axL{0.2}
\draw[very thick,->,>=stealth] (\axO,\axO) -- (\axO+\axL,\axO);
\draw[very thick,->,>=stealth] (\axO,\axO) -- (\axO,\axO+\axL);
\node at (\axO+\axL,\axO) [right] {$x$};
\node at (\axO,\axO+\axL) [left] {$y$};

\end{tikzpicture}
\caption{Schematic diagram of the flow past a circular cylinder in 2D. The inlet velocity profile $\gamma(y)$ is shown in blue curves and text.}
\label{fig:fpc2d-schematic}
\end{figure}

\begin{figure}[!htb]
\centering
\begin{subfigure}{0.49\textwidth}
    \centering
    \includegraphics[width=\textwidth,
    trim={0.0cm 11.5cm 9.0cm 11.0cm},clip]
    {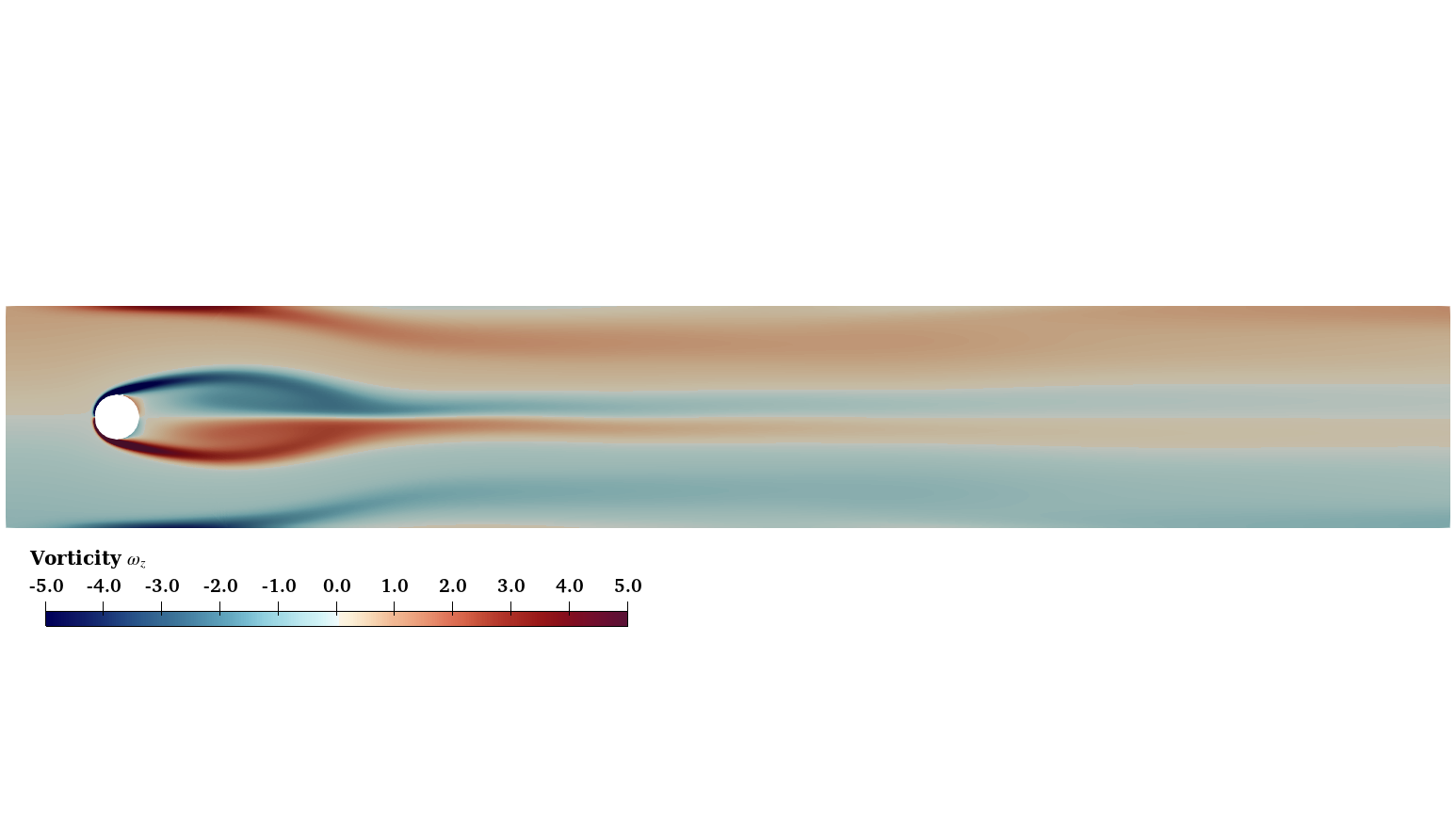}
    \caption{$t=50$}
\end{subfigure}
\hfill
\begin{subfigure}{0.49\textwidth}
    \centering
    \includegraphics[width=\textwidth,
    trim={0.0cm 11.5cm 9.0cm 11.0cm},clip]
    {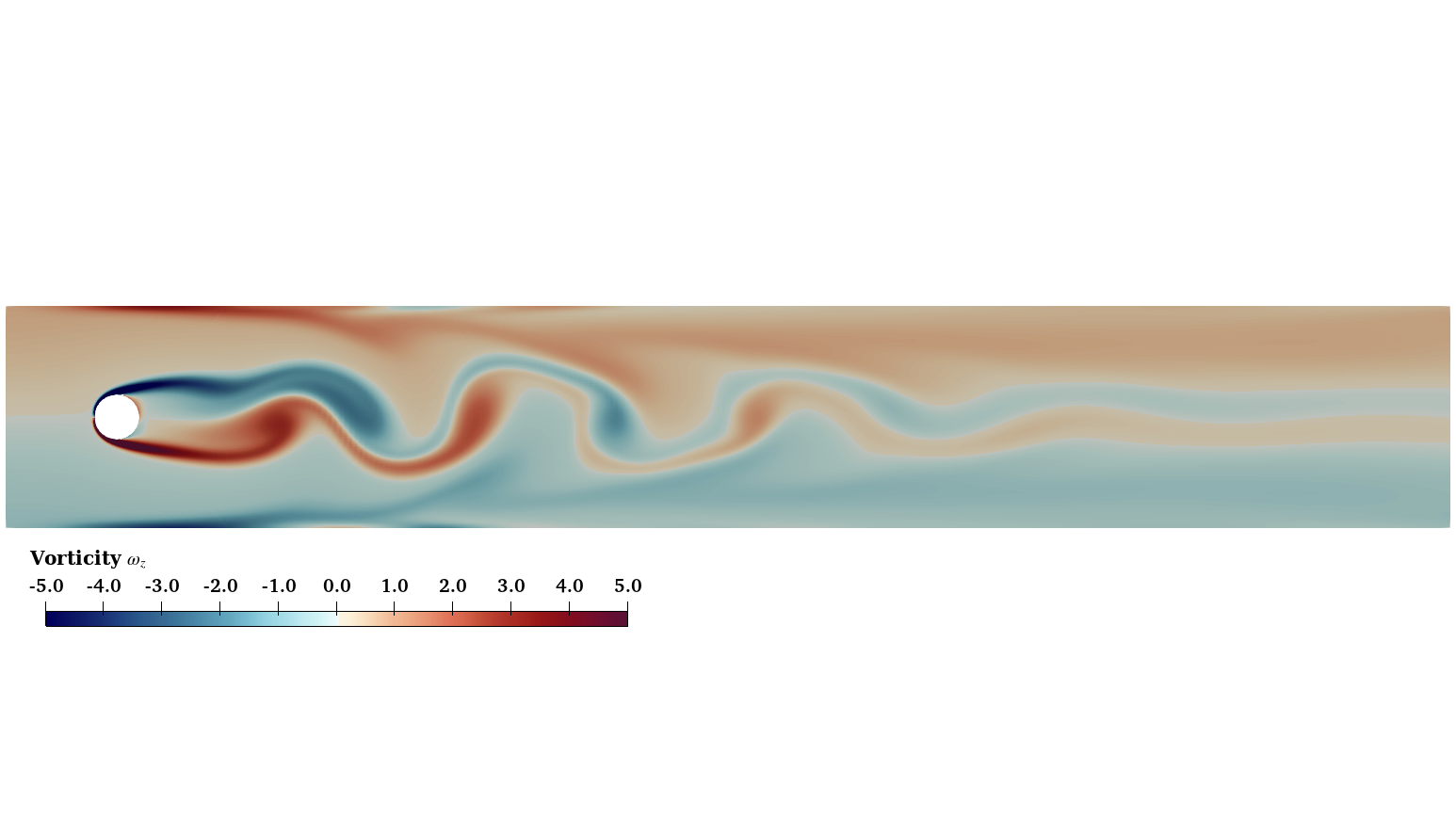}
    \caption{$t=70$}
\end{subfigure}

\vspace{0.3cm}

\begin{subfigure}{0.49\textwidth}
    \centering
    \includegraphics[width=\textwidth,
    trim={0.0cm 11.5cm 9.0cm 11.0cm},clip]
    {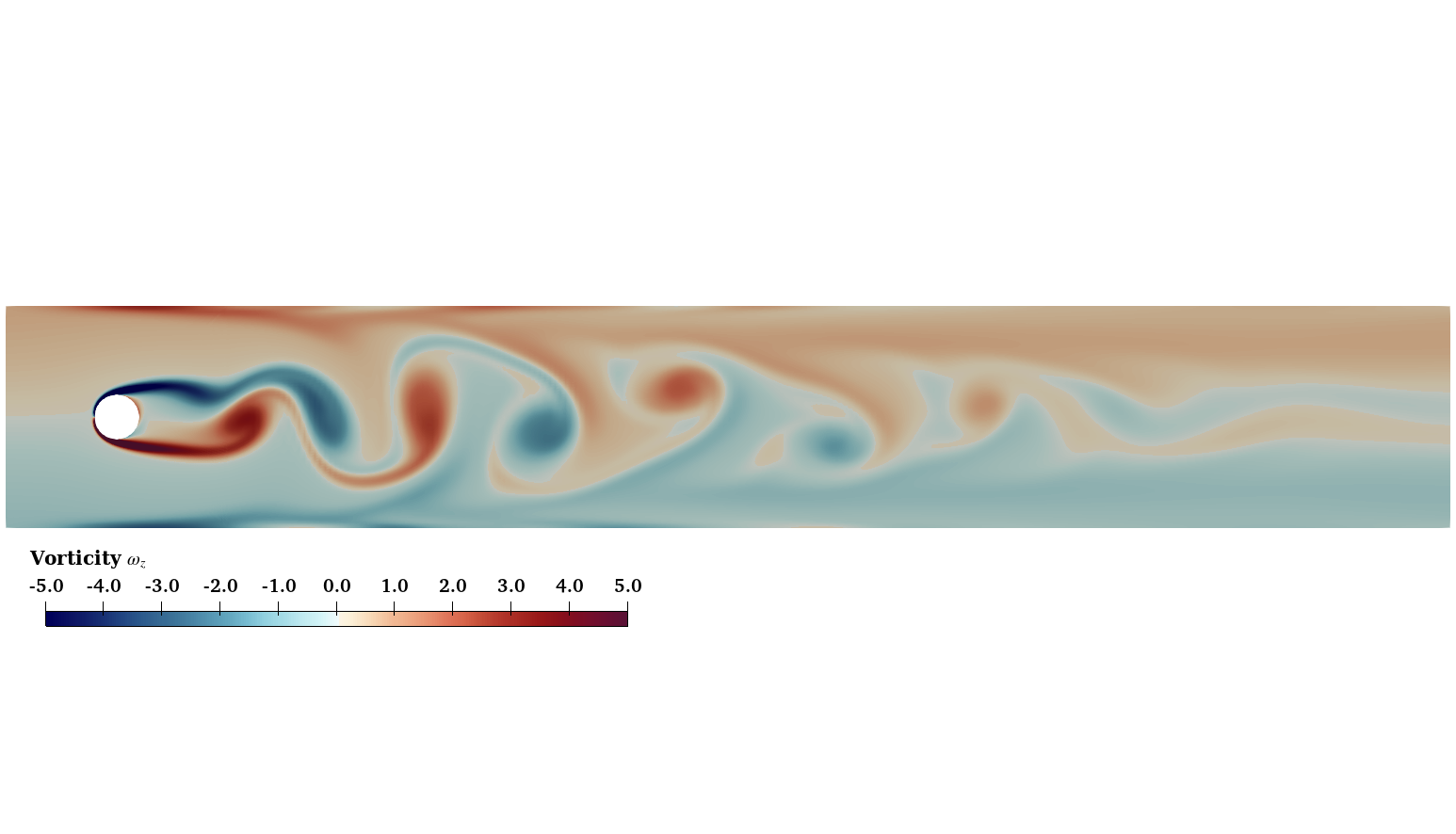}
    \caption{$t=75$}
\end{subfigure}
\hfill
\begin{subfigure}{0.49\textwidth}
    \centering
    \includegraphics[width=\textwidth,
    trim={0.0cm 11.5cm 9.0cm 11.0cm},clip]
    {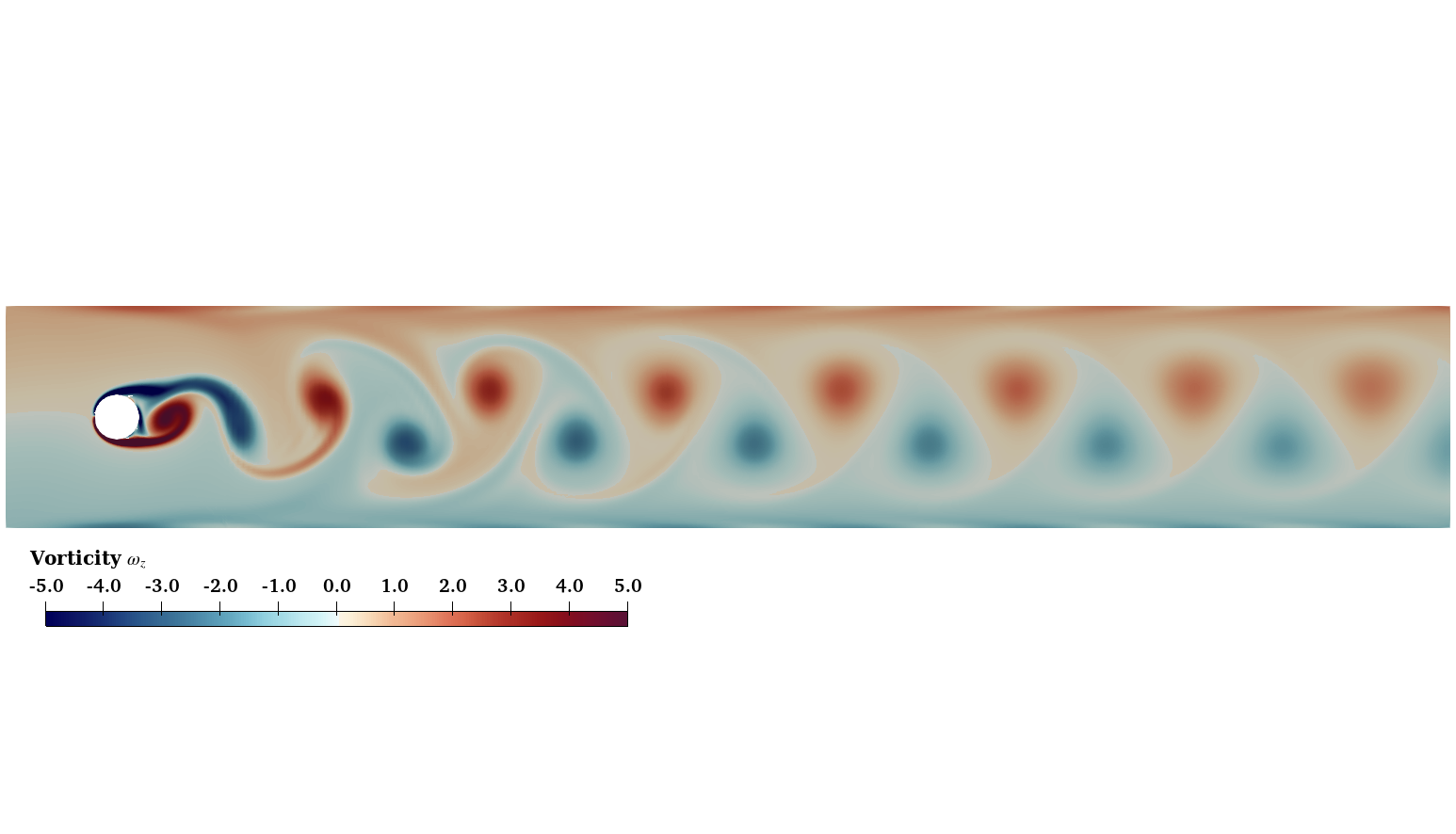}
    \caption{$t=150$}
\end{subfigure}

\vspace{0.3cm}


\begin{subfigure}{\textwidth}
    \centering
    \begin{tikzpicture}
        \begin{axis}[
            hide axis,
            scale only axis,
            height=0pt,
            width=0pt,
            colormap={RdBu}{
                rgb=(0.020,0.188,0.380)
                rgb=(0.129,0.400,0.674)
                rgb=(0.404,0.663,0.812)
                rgb=(0.698,0.847,0.910)
                rgb=(0.969,0.969,0.969)
                rgb=(0.992,0.859,0.780)
                rgb=(0.957,0.647,0.510)
                rgb=(0.839,0.376,0.302)
                rgb=(0.647,0.000,0.149)
            },
            point meta min=-5.0,
            point meta max=5.0,
            colorbar horizontal,
            colorbar style={
                width=8cm,
                height=0.35cm,
                xtick={-5,-4,-3,-2,-1,0,1,2,3,4,5},
                xticklabel style={font=\small},
                title={Vorticity $\omega_z$},
                title style={font=\small, yshift=-1ex},
            },
        ]
        \addplot [draw=none] coordinates {(0,0) (1,1)};
        \end{axis}
    \end{tikzpicture}
\end{subfigure}
\caption{Instantaneous vorticity contours for two-dimensional flow past a circular cylinder at Reynolds number $Re=300$. The evolution of the wake shows the onset and development of periodic vortex shedding and the formation of a von Kármán vortex street.}
\label{fig:four_vertical}
\end{figure}











Next, we test our method on the problem of two-dimensional flow past a circular 
cylinder. The setup is taken from \citet{schafer1996benchmark,kanaris2011three,zovatto2001flow}. The schematic of the problem is shown in
\figref{fig:fpc2d-schematic}. The spatial domain is $ \spDom = ((I_x\times I_y) \backslash C) $, where $ I_x = [0,L],\ I_y = [0,H] $ and $ C $ denotes a circle of diameter $ d=1 $ with center at $ (x_c,y_c) $. The length and width of the channel are given by $L=20$, $H=5$. The blockage ratio is $\nicefrac{d}{H} = 0.2$. The cylinder center is located at $x_c=3$ and $y_c=\tfrac{H}{2}+\tfrac{d}{10}=2.6$, placing it slightly above the channel centerline and thus asymmetrically between the bottom and top walls.
The boundary and initial conditions are given as:

\begin{subequations}
	\label{eq:fpc-boundary-conditions}
	\begin{align}
		\uvec &= [\gamma(y), 0]^T\ \text{on}\ x=0,\\
		\uvec &= [0,0]^T\ \text{on}\ y = 0,\ H,\\
		\uvec &= [0,0]^T\ \text{on cylinder}\ C,\\
		p-p* &= 0\ \text{on}\ x=L,\\
		\normalvec\cdot\grad\uvec &= \zerovec\ \text{on}\ x=L.
	\end{align}
\end{subequations}

where the inlet flow profile $ \gamma(y) $ is given by
\begin{align}\label{eq:fpc-inlet-vel}
	\gamma(y) = 4 u_c y (H-y) / H^2,
\end{align}
which is a parabolic profile with $ \gamma(0) = \gamma(H)=0 $ and $ \gamma_m := \gamma(\nicefrac{H}{2}) = u_c $. The average inlet velocity is given by $ \bar{\gamma} = \nicefrac{2}{3} u_c $. The Reynolds number for this problem is defined as $ Re = \frac{U_{ref} d}{\nu}$. $ U_{ref} = \gamma_m = u_c $. The force on the cylinder in the direction $ \wvec $ is expressed as
\begin{align}
	F_w = \int_{\Gamma_C} \left[ \left( -p \mmat{I} + \visco [\grad \uvec + \grad \uvec^T] \right) \cdot \normalvec \right]\cdot \wvec \ dS.
\end{align}
We set $ \wvec = (1,0) $ to extract the drag force, and $ \wvec = (0,1) $ to extract the lift force. The drag coefficient $C_d$ and the lift coefficient $C_l$ are then calculated as
\begin{align}
	C_d &= \frac{2 F_{\mathrm{drag}}}{AU^2} = 2 F_{\mathrm{drag}}, \\
	C_l &= \frac{2 F_{\mathrm{lift}}}{AU^2} = 2 F_{\mathrm{lift}}
\end{align}
where $A=1$ and $U=1$ are the reference area and speed respectively.

After a sufficient time is passed, the solution of this example is known to reach either a steady state or a periodic state depending on whether the Reynolds number is below or above a critical value. When the solution achieves a periodic state, so do the drag and lift forces calculated above. Suppose $T_p$ is the time period of the lift coefficient $C_l$ obtained from the numerical simulation. Then the corresponding Strouhal number is calculated as
\begin{align}
	St = \frac{L}{UT_p} = \frac{1}{T_p}
\end{align}
where $L=1$ and $U=1$ are the reference length and the speed respectively.

We solve this problem numerically on an unstructured triangular mesh of equal-order $P_1/P_1$ elements (with 57,170 triangles, denoted `M1') for various Reynolds numbers between $Re=100$ to $Re=300$. \figref{fig:four_vertical} shows the wake evolution at ${Re}=300$.
At $t=50$ the wake is symmetric. By $t=70$ the symmetry breaks and
alternating vortices begin forming near the cylinder. At $t=75$ these
vortices detach and advect downstream, and by $t=150$ the flow has
reached a periodic von Kármán vortex street.

We also compute the drag coefficient $C_d$ and the Strouhal number $St$ for each $Re =$ 100, 150, 200, 250, 300. These values are plotted against $Re$ in \figref{fig:fpc2d-validation}. Also shown are results from an inf-sup stable solver ($P_2/P_1$ with SUPG stabilization), and reference values from the literature~\cite{kanaris2011three, zovatto2001flow, singha2010flow}. The results from the proposed VMS-stabilized projection method match well with these values.

\paragraph{Monolithic vs. projection.}
It can be noted that in \figref{fig:fpc2d-validation}, the $C_d$ values from the standard monolithic VMS solver are higher (by about 25\%) compared to the projection solver. These values can be improved by considering a more refined mesh (denoted `M2', with 282,812 triangles), as shown in the plots.

Finally, the major difference between these two methods (apart from the splitting) is that the proposed VMS/projection scheme only decomposes $\uvect$, whereas the general monolithic solver decomposes both $\uvec$ and $p$. When decomposed, the small scales of pressure are approximated by $\pprime \approx \tauc \divergence \uvech$, which introduces the ``least-squares'' (also, grad-div)-like term $\inner{\tauc \divergence \uvech}{\divergence \vvech}$ in the momentum equations (see \eqref{eq:vms-finescale-model-overview-c} and \eqref{eq:vms-coarse-residuals-overview-c}). Now, if $p$ is left undecomposed (i.e., $\pprime=0$) in the monolithic solver, one obtains $C_d$ values that are closer to that from the VMS/projection solver (shown in the plot). This is also shown in \figref{fig:fpc2d-validation}.

In conclusion, the pressure decomposition in the monolithic solver leads to a overprediction of $C_d$ on the same mesh, which can be improved by considering a mesh with a higher resolution. Discarding the pressure fine scale also leads to more accurate $C_d$ and $St$ values, however, this may result in poorer divergence errors, as shown in the next example.



\begin{figure}[t]
	\centering
	\begin{tikzpicture}
		\begin{axis}[
			width=0.45\textwidth,
			scaled y ticks=true,
			xlabel={$Re$},
			ylabel={$C_d$},
			ymax=2.0,
			restrict x to domain=100:300,
			unbounded coords=discard,
			]

			\addplot+[semithick, mark options={scale=0.7, solid}] table [x expr=\thisrowno{0},y expr=\thisrowno{1},col sep=comma] {data/fpcKanaris/forces_projection_nonlinear_trimesh.txt};

			\addplot+[semithick, color=black, mark options={scale=0.7, solid}] table [x expr=\thisrowno{0},y expr=\thisrowno{1},col sep=comma] {data/fpcKanaris/forces_monolithic_nonlinear_trimesh.txt};

			\addplot+[semithick, color=green!50!black, mark options={scale=0.7, solid}] table [x expr=\thisrowno{0},y expr=\thisrowno{1},col sep=comma] {data/fpcKanaris/forces_monolithic_nonlinear_trimesh_s0_tauc_0.txt};

			\addplot+[semithick, color=violet, mark options={scale=0.7, solid}] table [x expr=\thisrowno{0},y expr=\thisrowno{1},col sep=comma] {data/fpcKanaris/forces_monolithic_nonlinear_trimesh_s0_refined.txt};

			\addplot+[semithick, color=brown!60!black, mark=*, mark options={scale=0.7, solid}]
			table [x=Re, y=meanCd, col sep=comma] {data/fpcKanaris/all.csv};

			\addplot+[only marks, mark=diamond*, mark options={scale=1.2, solid}, color=black]
			table [x expr=\thisrowno{0}, y expr=\thisrowno{1}, col sep=comma]
			{data/fpcKanaris/reference/singha-drag-data.txt};

			\addplot+[only marks, mark=square, mark options={scale=1.2, solid}, color=black]
			table [x expr=\thisrowno{1}, y expr=\thisrowno{2}, col sep=comma]
			{data/fpcKanaris/reference/zovatto-drag-data.txt};

			\addplot+[only marks, mark=star, mark options={semithick, scale=1.4, solid}, color=red]
			table [x expr=\thisrowno{0}, y expr=\thisrowno{1}, col sep=comma]
			{data/fpcKanaris/reference/kanaris-drag-data.txt};
		\end{axis}
	\end{tikzpicture}
	\hfill
	\begin{tikzpicture}
		\begin{axis}[
			width=0.45\textwidth,
			scaled y ticks=true,
			xlabel={$Re$},
			ylabel={$St$},
			restrict x to domain=100:300,
			unbounded coords=discard,
			]

			\addplot+[semithick, mark options={scale=0.7, solid}] table [x expr=\thisrowno{0},y expr=\thisrowno{3},col sep=comma] {data/fpcKanaris/forces_projection_nonlinear_trimesh.txt};

			\addplot+[semithick, color=black, mark options={scale=0.7, solid}] table [x expr=\thisrowno{0},y expr=\thisrowno{3},col sep=comma] {data/fpcKanaris/forces_monolithic_nonlinear_trimesh_s0.txt};

			\addplot+[semithick, color=green!50!black, mark options={scale=0.7, solid}] table [x expr=\thisrowno{0},y expr=\thisrowno{3},col sep=comma] {data/fpcKanaris/forces_monolithic_nonlinear_trimesh_s0_tauc_0.txt};

			\addplot+[semithick, color=violet, mark options={scale=0.7, solid}] table [x expr=\thisrowno{0},y expr=\thisrowno{3},col sep=comma] {data/fpcKanaris/forces_monolithic_nonlinear_trimesh_s0_refined.txt};

			\addplot+[semithick, color=brown!60!black, mark=*, mark options={scale=0.7, solid}]
			table [x=Re, y=St, col sep=comma] {data/fpcKanaris/all.csv};

			\addplot+[only marks, mark=square, mark options={scale=1.2, solid}, color=black]
			table [x expr=\thisrowno{1}, y expr=\thisrowno{3}, col sep=comma]
			{data/fpcKanaris/reference/zovatto-drag-data.txt};

			\addplot+[only marks, mark=star, mark options={semithick, scale=1.4, solid}, color=red]
			table [x expr=\thisrowno{0}, y expr=\thisrowno{1}, col sep=comma]
			{data/fpcKanaris/reference/kanaris-st-data.txt};
		\end{axis}
	\end{tikzpicture}

	\par\vspace{0.1cm}
	\begin{tikzpicture}
		\begin{axis}[
			hide axis,
			xmin=0, xmax=1, ymin=0, ymax=1,
			width=\linewidth,
			height=1.8cm,
			legend style={
				at={(0.5,1)},
				anchor=north,
				draw=none,
				fill=none,
				font=\scriptsize,
				legend columns=3,
				column sep=10pt,
				row sep=1pt,
			},
			legend cell align=left,
			]
			\addlegendimage{semithick, blue, mark=*, mark options={scale=0.7, solid, fill=blue!80!black}}
			\addlegendentry{VMS/projection (M1)}
			\addlegendimage{semithick, brown!60!black, mark=*, mark options={scale=0.7, solid}}
			\addlegendentry{$P_2/P_1$/{SUPG} (M1)}
			\addlegendimage{empty legend}
			\addlegendentry{}
			\addlegendimage{only marks, mark=square, mark options={scale=1.2, solid}, black}
			\addlegendentry{Zovatto	and Pedrizzetti}
			\addlegendimage{only marks, mark=diamond*, mark options={scale=1.2, solid}, black}
			\addlegendentry{Singha and Sinhamahapatra}
			\addlegendimage{only marks, mark=star, mark options={semithick, scale=1.4, solid}, red}
			\addlegendentry{Kanaris et al.}
			\addlegendimage{semithick, black, mark=square*, mark options={scale=0.7, solid}}
			\addlegendentry{VMS/monolithic (M1)}
			\addlegendimage{semithick, violet, mark=star, mark options={scale=0.7, solid}}
			\addlegendentry{VMS/monolithic (M2)}
			\addlegendimage{semithick, green!50!black, mark=otimes*, mark options={scale=0.7, solid}}
			\addlegendentry{VMS/monolithic, $\pprime = 0$ (M1)}
		\end{axis}
	\end{tikzpicture}

	\caption{{Results for flow past a circular cylinder in 2D:} drag coefficient $C_d$ (left) and Strouhal number $St$ (right). Mesh M1 has 57,170 triangles, and M2 has 282,812 triangles. The VMS/projection and VMS/monolithic results are obtained on $P_1/P_1$ elements. The reference values are taken from~\citet{zovatto2001flow,singha2010flow,kanaris2011three}.}
	\label{fig:fpc2d-validation}
\end{figure}

\subsection{Taylor-Green vortex}

\begin{figure}[!htb]
\centering
\begin{minipage}{0.32\textwidth}
\centering
\includegraphics[width=\textwidth,trim={0.0cm 0.0cm 0.0cm 0.0cm},clip]{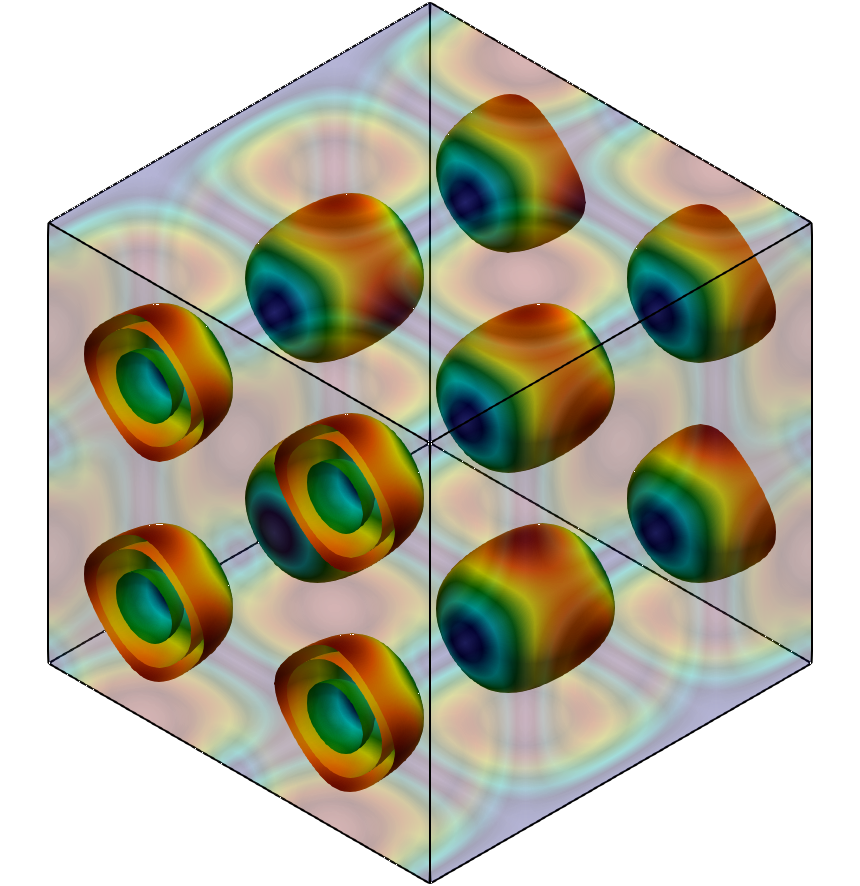}
\end{minipage}
\begin{minipage}{0.32\textwidth}
\centering
\includegraphics[width=\textwidth,trim={0.0cm 0.0cm 0.0cm 0.0cm},clip]{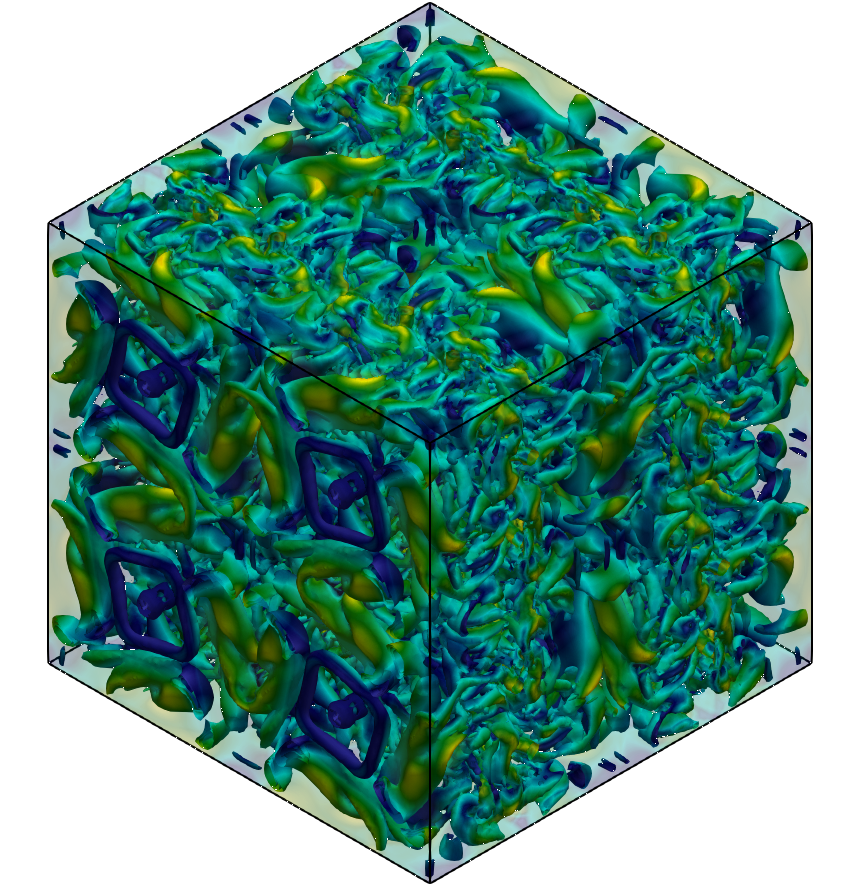}
\end{minipage}
\begin{minipage}{0.32\textwidth}
\centering
\includegraphics[width=\textwidth,trim={0.0cm 0.0cm 0.0cm 0.0cm},clip]{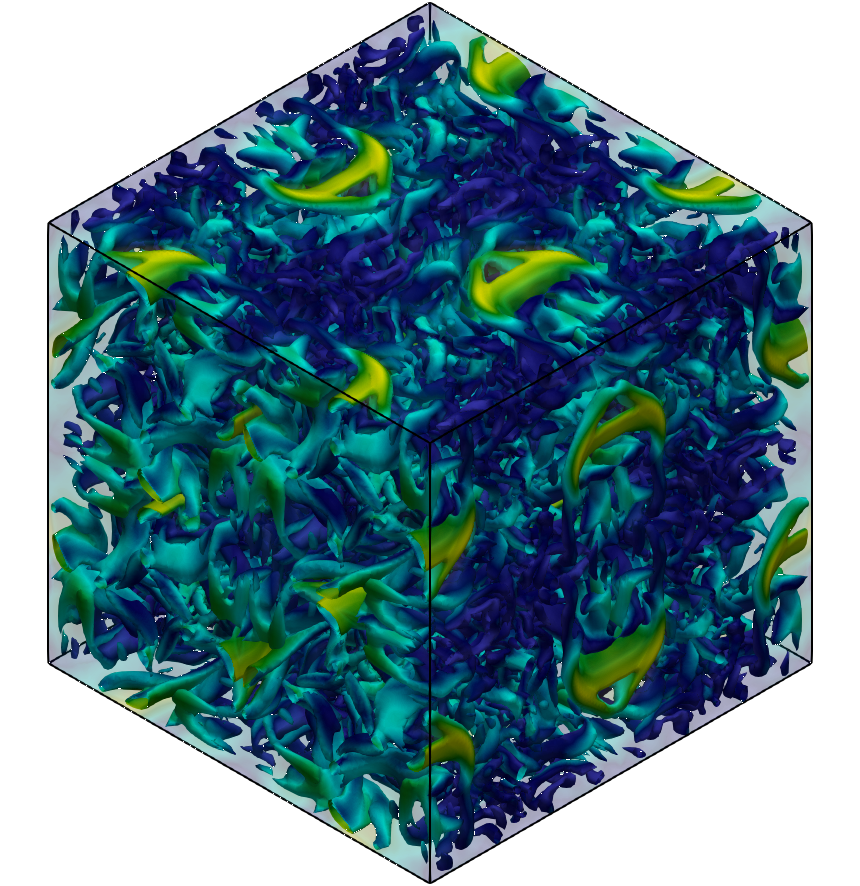}
\end{minipage}
\vspace{0.3cm}


\begin{subfigure}{\textwidth}
    \centering
    \begin{tikzpicture}
        \begin{axis}[
            hide axis,
            scale only axis,
            height=0pt,
            width=0pt,
            colormap={rainbowdesat}{
                rgb=(0.278, 0.278, 0.859)
                rgb=(0.000, 0.000, 0.360)
                rgb=(0.000, 1.000, 1.000)
                rgb=(0.000, 0.502, 0.000)
                rgb=(1.000, 1.000, 0.000)
                rgb=(1.000, 0.380, 0.380)
                rgb=(0.420, 0.000, 0.000)
            },
            point meta min=0.0,
            point meta max=1.0,
            colorbar horizontal,
            colorbar style={
                width=8cm,
                height=0.35cm,
                xtick={0.0,0.1,0.2,0.3,0.4,0.5,0.6,0.7,0.8,0.9,1.0},
                xticklabel style={font=\small},
                title={$\|\bm{u}\|$},
                title style={font=\small, yshift=-1ex},
            },
        ]
        \addplot [draw=none] coordinates {(0,0) (1,1)};
        \end{axis}
    \end{tikzpicture}
\end{subfigure}

\caption{Taylor-Green vortex at $\mathrm{Re}=1600$ from the
projection-based RBVMS scheme on the $128^3$ mesh, visualized using
Q-criterion iso-surfaces at $Q=0.3,\,0.5,\,0.8$,
coloured by velocity magnitude.
(Left) Initial condition at $t=0$, showing the organized cellular
structure. (Centre) Developed turbulent state at $t=15$, after the
breakdown of the initial vortex sheets. (Right) Decaying turbulent state
at $t=20$, showing the small-scale structures that persist late in the
simulation.}
\label{fig:taylorgreenqcriterion}
\end{figure}

The Taylor-Green vortex is a canonical problem used to study the decay of vortical structures in incompressible viscous flows. The computational domain is a cubic box defined as $\Omega = {[-\pi, \pi]}^3$ with periodic boundary conditions in all directions. The initial velocity and pressure fields are prescribed as
\[
u_x = \sin(x)\cos(y)\cos(z), \quad
u_y = -\cos(x)\sin(y)\cos(z), \quad
u_z = 0,
\]
\[
p = \frac{1}{16} \left[ \cos(2x) + \cos(2y) \right] \left[ \cos(2z) + 2 \right].
\]
These fields satisfy the incompressibility condition $\nabla \cdot \bm{u} = 0$. The flow evolves under the incompressible Navier-Stokes equations, leading to the gradual dissipation of kinetic energy due to viscous effects while maintaining the symmetry of the initial vortex structures. The Reynolds number is set to 1600. The volume-averaged kinetic energy is
\[
E_k(t) = \frac{1}{|\Omega|} \int_{\Omega}
\frac{\bm{u} \cdot \bm{u}}{2} \, d\Omega,
\]
and the volume-averaged enstrophy is
\[
\mathcal{E}(t) = \frac{1}{|\Omega|} \int_{\Omega}
\frac{\boldsymbol{\omega} \cdot \boldsymbol{\omega}}{2} \, d\Omega,
\qquad \boldsymbol{\omega} = \nabla \times \bm{u}.
\]
From these, we define two diagnostics of the kinetic-energy decay
rate. The \emph{energy-based} dissipation
\[
\varepsilon_k(t) \;\coloneq\; -\frac{dE_k}{dt},
\]
measures the instantaneous loss of resolved kinetic energy. And the
\emph{enstrophy-based} dissipation
\[
\varepsilon_\Omega(t) \;\coloneq \; \frac{2}{\mathrm{Re}} \, \mathcal{E}(t),
\]
is the viscous dissipation rate computed directly from the resolved
vorticity field. For an incompressible flow in a periodic domain, the
energy balance applied to the Navier-Stokes equations yields the
identity~\citep{pope2000turbulent}
\begin{equation}
\varepsilon_k(t) = \varepsilon_\Omega(t).
\label{eq:energy_enstrophy_balance}
\end{equation}
In an under-resolved simulation the two diverge: $\varepsilon_k$
captures the total kinetic-energy loss, including the contribution of
the unresolved scales, while $\varepsilon_\Omega$ accounts only for
the resolved viscous part. The gap $\varepsilon_k - \varepsilon_\Omega$
therefore quantifies the dissipation supplied by the subgrid model
and the numerical scheme.

\begin{figure}[!htb]
  \centering

  \begin{minipage}{0.49\textwidth}
    \centering
    \begin{tikzpicture}
      \begin{axis}[
          width=0.95\linewidth,
          xlabel={$t$},
          ylabel={Kinetic energy $E_k(t)$},
          xmin=0, xmax=20,
          ymin=0, ymax=0.14,
          ytick distance=0.02,
          xtick distance=2,
          scaled y ticks=false,
          yticklabel style={
              /pgf/number format/fixed,
              /pgf/number format/precision=3,
              /pgf/number format/fixed zerofill,
          },
          grid style={dashed, gray!30},
          tick label style={font=\footnotesize},
          label style={font=\small},
          legend to name=tgvmeshlegend,
          legend columns=-1,
          legend style={
            draw=none, fill=none,
            font=\small,
            /tikz/every even column/.append style={column sep=0.6cm},
          },
        ]
        \addplot[black, only marks, mark=o, mark size=2pt, each nth point=80]
            table[col sep=space, skip first n=1, x index=0, y index=1]
            {data/taylorgreen/wang_2013_spectral.txt};
        \addlegendentry{Wang et al., 2013 ($512^3$)}

        \addplot[blue, line width=1.5pt, no marks, each nth point=2]
            table[col sep=space, skip first n=1, x index=0, y index=1]
            {data/taylorgreen/monolithic_256X256.txt};
        \addlegendentry{$256^3$}

        \addplot[red, line width=1.5pt, dashed, no marks, each nth point=2]
            table[col sep=space, skip first n=1, x index=0, y index=1]
            {data/taylorgreen/monolithic_128X128.txt};
        \addlegendentry{$128^3$}

        \addplot[black, line width=1.5pt, densely dotted, no marks, each nth point=2]
            table[col sep=space, skip first n=1, x index=0, y index=1]
            {data/taylorgreen/monolithic_64X64.txt};
        \addlegendentry{$64^3$}
      \end{axis}
    \end{tikzpicture}
    \subcaption{Monolithic: $E_k(t)$}
    \label{fig:tgv_mono_Ek}
  \end{minipage}
  \hfill
  \begin{minipage}{0.49\textwidth}
    \centering
    \begin{tikzpicture}
      \begin{axis}[
          width=0.95\linewidth,
          xlabel={$t$},
          ylabel={Dissipation rate $\varepsilon_k(t)$},
          xmin=0, xmax=20,
          ymin=0, ymax=0.016,
          ytick distance=0.004,
          xtick distance=2,
          scaled y ticks=false,
          yticklabel style={
              /pgf/number format/fixed,
              /pgf/number format/precision=3,
              /pgf/number format/fixed zerofill,
          },
          grid style={dashed, gray!30},
          tick label style={font=\footnotesize},
          label style={font=\small},
        ]
        \addplot[black, only marks, mark=o, mark size=2pt, each nth point=80]
            table[col sep=space, skip first n=1, x index=0, y index=2]
            {data/taylorgreen/wang_2013_spectral.txt};
        \addplot[blue, line width=1.5pt, no marks, each nth point=2]
            table[col sep=space, skip first n=1, x index=0, y index=2]
            {data/taylorgreen/monolithic_256X256.txt};
        \addplot[red, line width=1.5pt, dashed, no marks, each nth point=2]
            table[col sep=space, skip first n=1, x index=0, y index=2]
            {data/taylorgreen/monolithic_128X128.txt};
        \addplot[black, line width=1.5pt, densely dotted, no marks, each nth point=2]
            table[col sep=space, skip first n=1, x index=0, y index=2]
            {data/taylorgreen/monolithic_64X64.txt};
      \end{axis}
    \end{tikzpicture}
    \subcaption{Monolithic: $\varepsilon_k(t)$}
    \label{fig:tgv_mono_eps}
  \end{minipage}

  \vspace{0.8em}

  \begin{minipage}{0.49\textwidth}
    \centering
    \begin{tikzpicture}
      \begin{axis}[
          width=0.95\linewidth,
          xlabel={$t$},
          ylabel={Kinetic energy $E_k(t)$},
          xmin=0, xmax=20,
          ymin=0, ymax=0.14,
          ytick distance=0.02,
          xtick distance=2,
          scaled y ticks=false,
          yticklabel style={
              /pgf/number format/fixed,
              /pgf/number format/precision=3,
              /pgf/number format/fixed zerofill,
          },
          grid style={dashed, gray!30},
          tick label style={font=\footnotesize},
          label style={font=\small},
        ]
        \addplot[black, only marks, mark=o, mark size=2pt, each nth point=80]
            table[col sep=space, skip first n=1, x index=0, y index=1]
            {data/taylorgreen/wang_2013_spectral.txt};
        \addplot[blue, line width=1.5pt, no marks, each nth point=2]
            table[col sep=space, skip first n=1, x index=0, y index=1]
            {data/taylorgreen/projection_256X256.txt};
        \addplot[red, line width=1.5pt, dashed, no marks, each nth point=2]
            table[col sep=space, skip first n=1, x index=0, y index=1]
            {data/taylorgreen/projection_128X128.txt};
        \addplot[black, line width=1.5pt, densely dotted, no marks, each nth point=2]
            table[col sep=space, skip first n=1, x index=0, y index=1]
            {data/taylorgreen/projection_64X64.txt};
      \end{axis}
    \end{tikzpicture}
    \subcaption{Projection: $E_k(t)$}
    \label{fig:tgv_proj_Ek}
  \end{minipage}
  \hfill
  \begin{minipage}{0.49\textwidth}
    \centering
    \begin{tikzpicture}
      \begin{axis}[
          width=0.95\linewidth,
          xlabel={$t$},
          ylabel={Dissipation rate $\varepsilon_k(t)$},
          xmin=0, xmax=20,
          ymin=0, ymax=0.016,
          ytick distance=0.004,
          xtick distance=2,
          scaled y ticks=false,
          yticklabel style={
              /pgf/number format/fixed,
              /pgf/number format/precision=3,
              /pgf/number format/fixed zerofill,
          },
          grid style={dashed, gray!30},
          tick label style={font=\footnotesize},
          label style={font=\small},
        ]
        \addplot[black, only marks, mark=o, mark size=2pt, each nth point=80]
            table[col sep=space, skip first n=1, x index=0, y index=2]
            {data/taylorgreen/wang_2013_spectral.txt};
        \addplot[blue, line width=1.5pt, no marks, each nth point=2]
            table[col sep=space, skip first n=1, x index=0, y index=2]
            {data/taylorgreen/projection_256X256.txt};
        \addplot[red, line width=1.5pt, dashed, no marks, each nth point=2]
            table[col sep=space, skip first n=1, x index=0, y index=2]
            {data/taylorgreen/projection_128X128.txt};
        \addplot[black, line width=1.5pt, densely dotted, no marks, each nth point=2]
            table[col sep=space, skip first n=1, x index=0, y index=2]
            {data/taylorgreen/projection_64X64.txt};
      \end{axis}
    \end{tikzpicture}
    \subcaption{Projection: $\varepsilon_k(t)$}
    \label{fig:tgv_proj_eps}
  \end{minipage}

  \vspace{0.4em}
  \centerline{\pgfplotslegendfromname{tgvmeshlegend}}

  \caption{Taylor--Green vortex at $\mathrm{Re}=1600$ on three mesh
  resolutions ($64^3$, $128^3$, $256^3$), compared against the
  pseudo-spectral DNS of \citet{wang2013high} on a $512^3$ grid.
  Top row: monolithic VMS scheme. Bottom row: VMS-stabilized
  projection scheme. Left column: volume-averaged kinetic energy
  $E_k(t)$. Right column: energy-based dissipation rate
  $\varepsilon_k(t)$.}
  \label{fig:tgv_results}
\end{figure}

\begin{figure}[!htb]
  \centering
  \begin{tikzpicture}
    \begin{axis}[
        width=0.55\linewidth,
        xmode=log, ymode=log,
        xlabel={$k$},
        ylabel={$E(k)$},
        xmin=2, xmax=160,
        ymin=1e-7, ymax=1e-1,
        xtick={1, 2, 10, 128},
        xticklabels={$1$, $2$, $10$,  $128$},
        minor xtick={2,3,4,5,6,7,8,9,20,30,40,50,60,70,80,90,150},
        grid=none,
        tick label style={font=\footnotesize},
        label style={font=\small},
        legend style={
          at={(0.0,0.0)}, anchor=south west,
          font=\scriptsize, draw=none, fill=none,
        },
      ]
      \addplot[black, only marks, mark=o, mark size=2pt, each nth point=1]
          table[col sep=comma, x=x, y expr=\thisrow{y}/1]{data/taylorgreen/energySpectrumDNSTGV.csv};
      \addlegendentry{Wang 2013}

      \addplot[blue, line width=1.5pt, no marks]
          table[col sep=space, skip first n=1, x index=0, y index=1]
          {data/taylorgreen/spectrum_t8_256.txt};
      \addlegendentry{Projection ($256^3$)}

      \addplot[black, densely dotted, line width=1pt, domain=2:128, samples=30]
          {0.1*x^(-5/3)};
      \addlegendentry{$k^{-5/3}$ (reference)}

      \draw[black, dotted, line width=1pt]
          (axis cs:128,1e-7) -- (axis cs:128,1e-1);
      \node[black, font=\scriptsize, rotate=90, anchor=south]
          at (axis cs:128,3e-3) {Nyquist};
    \end{axis}
  \end{tikzpicture}
  \caption{Three-dimensional kinetic-energy spectrum $E(k)$ at $t=8$ on
  the $256^3$ mesh for the VMS-stabilized projection scheme. The dashed
  line indicates a $k^{-5/3}$ slope for reference. The spectrum is
  compared against the pseudo-spectral DNS of \citet{wang2013high} at $t=8$ on a $512^3$ grid.}
  \label{fig:tgv_spectrum}
\end{figure}



\pgfplotsset{
  tgvdiss/.style={
    width=\linewidth, height=0.9\linewidth,
    xmin=0, xmax=20, ymin=0, ymax=0.016,
    xlabel={$t$},
    xtick distance=5, ytick distance=0.004,
    scaled y ticks=false,
    yticklabel style={
      /pgf/number format/fixed,
      /pgf/number format/precision=3,
      /pgf/number format/fixed zerofill,
    },
    grid style={dashed, gray!30},
    tick label style={font=\scriptsize},
    label style={font=\footnotesize},
  }
}

\begin{figure}[!htb]
  \centering

  \begin{minipage}{0.32\textwidth}\centering
    \begin{tikzpicture}
      \begin{axis}[tgvdiss, ylabel={$\varepsilon(t)$},
            legend style={at={(0.98,0.98)}, anchor=north east,
                          font=\tiny, draw=none, fill=none, row sep=-2pt}]
        \addplot[red!70!black, line width=1pt, no marks, name path=mA]
          table[col sep=space, skip first n=1, x index=0, y index=2]
          {data/taylorgreen/monolithic_64X64.txt};
          \addlegendentry{$\varepsilon_k$}
        \addplot[red!70!black, line width=1pt, dashed, no marks, name path=mB]
          table[col sep=space, skip first n=1, x index=0, y index=3]
          {data/taylorgreen/monolithic_64X64.txt};
          \addlegendentry{$\varepsilon_\Omega$}
        \addplot[red!15, forget plot] fill between[of=mA and mB];
        \addplot[black, only marks, mark=o, mark size=1pt, each nth point=100]
          table[col sep=space, skip first n=1, x index=0, y index=2]
          {data/taylorgreen/wang_2013_spectral.txt};
          \addlegendentry{Wang 2013}
      \end{axis}
    \end{tikzpicture}
    \subcaption{Monolithic, $64^3$}\label{fig:tgv_diss_mono_64}
  \end{minipage}\hfill
  \begin{minipage}{0.32\textwidth}\centering
    \begin{tikzpicture}
      \begin{axis}[tgvdiss, yticklabels={}]
        \addplot[red!70!black, line width=1pt, no marks, name path=mC]
          table[col sep=space, skip first n=1, x index=0, y index=2]
          {data/taylorgreen/monolithic_128X128.txt};
        \addplot[red!70!black, line width=1pt, dashed, no marks, name path=mD]
          table[col sep=space, skip first n=1, x index=0, y index=3]
          {data/taylorgreen/monolithic_128X128.txt};
        \addplot[red!15, forget plot] fill between[of=mC and mD];
        \addplot[black, only marks, mark=o, mark size=1pt, each nth point=100]
          table[col sep=space, skip first n=1, x index=0, y index=2]
          {data/taylorgreen/wang_2013_spectral.txt};
      \end{axis}
    \end{tikzpicture}
    \subcaption{Monolithic, $128^3$}\label{fig:tgv_diss_mono_128}
  \end{minipage}\hfill
  \begin{minipage}{0.32\textwidth}\centering
    \begin{tikzpicture}
      \begin{axis}[tgvdiss, yticklabels={}]
        \addplot[red!70!black, line width=1pt, no marks, name path=mE]
          table[col sep=space, skip first n=1, x index=0, y index=2]
          {data/taylorgreen/monolithic_256X256.txt};
        \addplot[red!70!black, line width=1pt, dashed, no marks, name path=mF]
          table[col sep=space, skip first n=1, x index=0, y index=3]
          {data/taylorgreen/monolithic_256X256.txt};
        \addplot[red!15, forget plot] fill between[of=mE and mF];
        \addplot[black, only marks, mark=o, mark size=1pt, each nth point=100]
          table[col sep=space, skip first n=1, x index=0, y index=2]
          {data/taylorgreen/wang_2013_spectral.txt};
      \end{axis}
    \end{tikzpicture}
    \subcaption{Monolithic, $256^3$}\label{fig:tgv_diss_mono_256}
  \end{minipage}

  \vspace{0.8em}

  \begin{minipage}{0.32\textwidth}\centering
    \begin{tikzpicture}
      \begin{axis}[tgvdiss, ylabel={$\varepsilon(t)$},
            legend style={at={(0.98,0.98)}, anchor=north east,
                          font=\tiny, draw=none, fill=none, row sep=-2pt}]
        \addplot[blue!70!black, line width=1pt, no marks, name path=pA]
          table[col sep=space, skip first n=1, x index=0, y index=2]
          {data/taylorgreen/projection_64X64.txt};
          \addlegendentry{$\varepsilon_k$}
        \addplot[blue!70!black, line width=1pt, dashed, no marks, name path=pB]
          table[col sep=space, skip first n=1, x index=0, y index=3]
          {data/taylorgreen/projection_64X64.txt};
          \addlegendentry{$\varepsilon_\Omega$}
        \addplot[blue!15, forget plot] fill between[of=pA and pB];
        \addplot[black, only marks, mark=o, mark size=1pt, each nth point=100]
          table[col sep=space, skip first n=1, x index=0, y index=2]
          {data/taylorgreen/wang_2013_spectral.txt};
          \addlegendentry{Wang 2013}
      \end{axis}
    \end{tikzpicture}
    \subcaption{Projection, $64^3$}\label{fig:tgv_diss_proj_64}
  \end{minipage}\hfill
  \begin{minipage}{0.32\textwidth}\centering
    \begin{tikzpicture}
      \begin{axis}[tgvdiss, yticklabels={}]
        \addplot[blue!70!black, line width=1pt, no marks, name path=pC]
          table[col sep=space, skip first n=1, x index=0, y index=2]
          {data/taylorgreen/projection_128X128.txt};
        \addplot[blue!70!black, line width=1pt, dashed, no marks, name path=pD]
          table[col sep=space, skip first n=1, x index=0, y index=3]
          {data/taylorgreen/projection_128X128.txt};
        \addplot[blue!15, forget plot] fill between[of=pC and pD];
        \addplot[black, only marks, mark=o, mark size=1pt, each nth point=100]
          table[col sep=space, skip first n=1, x index=0, y index=2]
          {data/taylorgreen/wang_2013_spectral.txt};
      \end{axis}
    \end{tikzpicture}
    \subcaption{Projection, $128^3$}\label{fig:tgv_diss_proj_128}
  \end{minipage}\hfill
  \begin{minipage}{0.32\textwidth}\centering
    \begin{tikzpicture}
      \begin{axis}[tgvdiss, yticklabels={}]
        \addplot[blue!70!black, line width=1pt, no marks, name path=pE]
          table[col sep=space, skip first n=1, x index=0, y index=2]
          {data/taylorgreen/projection_256X256.txt};
        \addplot[blue!70!black, line width=1pt, dashed, no marks, name path=pF]
          table[col sep=space, skip first n=1, x index=0, y index=3]
          {data/taylorgreen/projection_256X256.txt};
        \addplot[blue!15, forget plot] fill between[of=pE and pF];
        \addplot[black, only marks, mark=o, mark size=1pt, each nth point=100]
          table[col sep=space, skip first n=1, x index=0, y index=2]
          {data/taylorgreen/wang_2013_spectral.txt};
      \end{axis}
    \end{tikzpicture}
    \subcaption{Projection, $256^3$}\label{fig:tgv_diss_proj_256}
  \end{minipage}

  \caption{Energy-based ($\varepsilon_k$, solid) and enstrophy-based
  ($\varepsilon_\Omega$, dashed) dissipation rates for the
  Taylor--Green vortex at $\mathrm{Re}=1600$. Top row: monolithic
  VMS solver. Bottom row: VMS/projection solver. Columns
  correspond to the $64^3$, $128^3$, and $256^3$ meshes. The shaded
  region indicates the gap $\varepsilon_k - \varepsilon_\Omega$,
  representing the contribution of unresolved scales (model and
  numerical dissipation) to the total kinetic-energy decay. Open
  circles are the pseudo-spectral DNS reference of
  \citet{wang2013high} on a $512^3$ grid.}
  \label{fig:tgv_dissipation_grid}
\end{figure}

The simulation is run on uniform Cartesian meshes of $64^3$, $128^3$,
and $256^3$ elements with a constant time step of $\Delta t = 0.001$,
integrated to $t = 20$. The proposed VMS-stabilized projection scheme
and the monolithic VMS scheme are both run on the same meshes with
identical Krylov solver settings. The kinetic energy
$E_k(t)$, the energy-based dissipation rate $\varepsilon_k(t)$, and the enstrophy-based dissipation rate
$\varepsilon_\Omega(t)$ are computed at every time step and compared
with the pseudo-spectral DNS reference of \citet{wang2013high} on a
$512^3$ grid.

The flow structure is visualized in Figure~\ref{fig:taylorgreenqcriterion}
using Q-criterion iso-surfaces coloured by velocity magnitude. At $t=0$,
the flow exhibits the organized cellular vortex structure prescribed by
the initial condition. By $t=15$, the initial vortex sheets have broken
down into a dense network of small-scale vortical structures, marking
the developed turbulent regime. At $t=20$, the small-scale structures
persist but have decayed in intensity, consistent with the monotonic
energy decay observed in Figure~\ref{fig:tgv_results}(c).

\figref{fig:tgv_results} presents the kinetic-energy decay $E_k(t)$ and the energy-based dissipation rate $\varepsilon_k(t)$ for both schemes on the three meshes, alongside the spectral reference. For both formulations the curves approach the DNS reference under refinement and track it closely on the finer grids. The kinetic-energy spectrum $E(k)$ at $t=8$ on the $256^3$ mesh (\figref{fig:tgv_spectrum}) follows the $k^{-5/3}$ reference slope and decays to the Nyquist wavenumber without high-wavenumber accumulation, indicating that the VMS closure dissipates small-scale energy at the rate set by the resolved cascade and that the velocity-pressure splitting does not introduce spurious small-scale content.

\paragraph{Solution accuracy: monolithic vs.\ projection.}
The projection scheme tracks the pseudo-spectral DNS of \citet{wang2013high}
closely once the mesh reaches $128^3$, with both the kinetic energy and the
dissipation rate in close agreement with the reference
(\figref{fig:tgv_results}(c,d)). The monolithic VMS formulation, run on the
same meshes with identical time step and Krylov solver settings, also
converges toward DNS, but more slowly: at $64^3$ and $128^3$ its kinetic
energy decays faster than the reference and lies below the DNS curve through
the mid-range, and only at $256^3$ does $E_k(t)$ track DNS closely
(\figref{fig:tgv_results}(a)). It also continues to over-dissipate
relative to the reference even at $256^3$.

The mechanism is exposed by the gap between the energy-based dissipation
$\varepsilon_k(t) = -dE_k/dt$ and the enstrophy-based dissipation
$\varepsilon_\Omega(t) = (2/\mathrm{Re})\,\mathcal{E}(t)$, shown as the shaded
region in Figure~\ref{fig:tgv_dissipation_grid}. This gap measures the
dissipation supplied by the modeled subgrid stresses rather than by viscosity
acting on the resolved vorticity. For the projection scheme (bottom row) the
gap is visible at $64^3$ but closes almost entirely by $128^3$ and is
imperceptible at $256^3$, so the resolved enstrophy accounts for essentially
all of the energy decay. For the monolithic scheme (top row) the gap is wide at
$64^3$ and narrows under refinement, but remains clearly open at $256^3$, where
a substantial share of the dissipation is still supplied by the model rather
than the resolved vorticity.

Like in the previous example, we attribute this difference to the ``grad-div''-like term present in the monolithic VMS scheme (see \eqref{eq:vms-finescale-model-overview-c} and \eqref{eq:vms-coarse-residuals-overview-c}), which dissipates kinetic energy through the divergence error and damps the velocity gradients that drive vortex stretching. The projection scheme does not have this term, since the pressure is not decomposed into multiple scales. The results across the three meshes indicate that this behavior is structural rather than a resolution artifact: the monolithic dissipation gap does not close under refinement, so even at $256^3$---where $E_k(t)$ matches DNS---a large part of the energy decay is carried by the modeled term rather than the resolved enstrophy. The projection scheme, by contrast, matches DNS with an essentially closed gap already at $128^3$. The ablation in Figure~\ref{fig:tgv_tauC_effect} confirms this attribution directly. Setting $\pprime=0$ in the monolithic scheme brings a good match with the DNS reference of \citet{wang2013high} (Figure~\ref{fig:tgv_Ek_tauC}). However, the downside is that the divergence can suffer greatly. Figure~\ref{fig:tgv_divnorm_tauC} plots the $L^2$-norm of the divergence of the velocity ($\uvech$ for monolithic, $\uvecSh$ for projection) with time. The curve for projection is very close to that of monolithic with $\pprime=0$, whereas the standard monolithic solver yields the lowest divergence. The grad-div term suppresses the divergence, and is also responsible for the excess dissipation of the kinetic energy.

\begin{figure}[!htb]
  \centering
  \begin{minipage}{0.49\textwidth}
    \centering
    \begin{tikzpicture}
      \begin{axis}[
          width=0.95\linewidth, height=0.7\linewidth,
          xmin=0, xmax=20,
          ymin=0, ymax=0.14,
          xlabel={$t$},
          ylabel={Kinetic energy $E_k(t)$},
          xtick distance=5,
          ytick distance=0.02,
          scaled y ticks=false,
          yticklabel style={
              /pgf/number format/fixed,
              /pgf/number format/precision=3,
              /pgf/number format/fixed zerofill,
          },
          grid=both,
          grid style={dashed, gray!30},
          tick label style={font=\scriptsize},
          label style={font=\footnotesize},
          legend style={at={(0.0,0.0)}, anchor=south west,
                        font=\small, draw=none, fill=none},
          legend cell align={left},
        ]

        \addplot[red!70!black, line width=1pt, no marks, each nth point=2]
          table[col sep=space, skip first n=1, x index=0, y index=1]
          {data/taylorgreen/monolithic_128X128.txt};
          \addlegendentry{Monolithic}
        \addplot[blue!70!black, line width=1pt, dashed, no marks, each nth point=2]
          table[col sep=space, skip first n=1, x index=0, y index=1]
          {data/taylorgreen/monolothic_128X128_WithoutTauC.txt};
          \addlegendentry{Monolithic, $\pprime =0$}
        \addplot[black, only marks, mark=o, mark size=2pt, each nth point=80]
        table[col sep=space, skip first n=1, x index=0, y index=1]
        {data/taylorgreen/wang_2013_spectral.txt};
        \addlegendentry{Wang et al., 2013 ($512^3$)}
      \end{axis}
    \end{tikzpicture}
    \subcaption{Kinetic energy $E_k(t)$}
    \label{fig:tgv_Ek_tauC}
  \end{minipage}
  \hfill
  \begin{minipage}{0.49\textwidth}
    \centering
    \begin{tikzpicture}
      \begin{axis}[
          width=0.95\linewidth, height=0.7\linewidth,
          xmin=0, xmax=20, ymin=0,
          xlabel={$t$},
          ylabel={$\|\divergence\uvec\|_{L^2}$},
          xtick distance=5,
          grid=both,
          grid style={dashed, gray!30},
          tick label style={font=\scriptsize},
          label style={font=\footnotesize},
          legend style={at={(0.0,0.8)}, anchor=north west,
                        font=\small, draw=none, fill=none},
          legend cell align={left},
        ]

            \addplot[blue, line width=1pt, no marks, each nth point=20]
          table[col sep=comma, skip first n=1, x index=0, y index=10]
          {data/taylorgreen/Projection_integral_values.txt};
          \addlegendentry{Projection}

        \addplot[red!70!black, line width=1pt, no marks, each nth point=20]
          table[col sep=comma, skip first n=1, x index=0, y index=10]
          {data/taylorgreen/MonolothicWithTauC_integral_values.txt};
          \addlegendentry{Monolithic}
        \addplot[blue!70!black, line width=1pt, dashed, no marks, each nth point=20]
          table[col sep=comma, skip first n=1, x index=0, y index=10]
          {data/taylorgreen/MonolothicWithoutTauC_integral_values.txt};
          \addlegendentry{Monolithic, $\pprime =0$}
      \end{axis}
    \end{tikzpicture}
    \subcaption{Velocity divergence $\|\divergence\uvec\|_{L^2}$}
    \label{fig:tgv_divnorm_tauC}
  \end{minipage}

  \caption{Effect of the continuity (grad--div) stabilization on the
  Taylor--Green vortex at $\mathrm{Re}=1600$ (monolithic VMS scheme,
  $128^3$). Left: volume-averaged kinetic energy $E_k(t)$, both with and
  without the $\tau_C$ term. Right: $L^2$ norm of the divergence of
  velocity over time.}
  \label{fig:tgv_tauC_effect}
\end{figure}

\begin{figure}[!htb]
	\centering
	\begin{tikzpicture}
		\begin{axis}[
			ybar,
			width=0.75\linewidth,
			height=0.5\linewidth,
			symbolic x coords={mesh64,mesh128,mesh256},
			xtick=data,
			xticklabels={$64^3$,$128^3$,$256^3$},
			xticklabel style={font=\small},
			ymin=0,
			ymax=5.0,
			ylabel={$T^w_{\textrm{solve}}$ (s)},
			xlabel={Mesh resolution},
			bar width=14pt,
			enlarge x limits=0.25,
			grid=both,
			grid style={dotted},
			nodes near coords,
			nodes near coords align={vertical},
			every node near coord/.append style={font=\footnotesize},
			legend style={
				at={(0.02,0.98)},
				anchor=north west,
				font=\small,
				draw=black,
				fill=white,
				cells={anchor=west},
			},
			legend cell align={left},
			]
			\addplot+[fill=black!60, draw=black,
			every node near coord/.append style={text=black}] coordinates {
				(mesh64,  1.059)
				(mesh128, 2.299)
				(mesh256, 4.060)
			};
			\addplot+[fill=black!20, draw=black,
			every node near coord/.append style={text=black}] coordinates {
				(mesh64,  0.816)
				(mesh128, 1.366)
				(mesh256, 1.523)
			};
			\legend{Monolithic, Projection}
		\end{axis}
	\end{tikzpicture}
	\caption{Average wall-clock solve time per time step for the
		Taylor-Green vortex at $Re=1600$ on three mesh resolutions,
		measured over $20{,}000$ time steps. Both solvers use BiCGStab
		with additive Schwarz preconditioning ($\textrm{rtol}=10^{-8}$).}
	\label{fig:tgv-solve-times}
\end{figure}

\paragraph{Computational cost.}
Figure~\ref{fig:tgv-solve-times} reports the average wall-clock solve time per
time step for both schemes on the three meshes, measured over $20{,}000$ steps
under identical BiCGStab and additive-Schwarz settings. The projection scheme is
cheaper than the monolithic scheme at every resolution tested. On the $128^3$
and $256^3$ meshes, which use the same number of elements per processor, the
projection solve takes $1.37$ and $1.52$~s per step against $2.30$ and $4.06$~s
for the monolithic scheme.

\section{Conclusion}\label{sec:conclusions}
In this work, we presented a VMS-stabilized projection method based on the Chorin-Temam family of fractional-step schemes. The method replaces the coupled velocity-pressure saddle-point system with three subproblems: (i) a stabilized nonlinear advection--diffusion--reaction problem for an intermediate velocity, (ii) a Poisson equation for the pressure, and (iii) a Helmholtz-Leray projection of the intermediate velocity onto a weakly divergence-free subspace. Because the Galerkin discretization of the momentum predictor can become unstable in advection-dominated regimes, we applied a multiscale decomposition to the predicted velocity. However, unlike in the monolithic VMS formulation, the pressure is not decomposed into coarse and fine scales. The resulting formulation introduces SUPG-like stabilization in the momentum predictor and a PSPG-like residual contribution in the pressure Poisson equation. We also provided a formal error analysis that separates the time-discretization, projection-splitting, and spatial-VMS errors, recovering second-order temporal accuracy for the velocity with BDF2 and the established first-order pressure accuracy of the non-rotational pressure-correction scheme.

We tested the method on four benchmark problems of increasing difficulty. The method of manufactured solutions confirms the predicted second-order temporal convergence of the velocity for $Re \in \{10^3,10^4,10^5,10^6\}$. For the lid-driven cavity at $Re \in \{100,1000,5000,10000\}$, the computed velocity profiles agree closely with both the monolithic VMS results and the reference data of \citet{ghia1982high}. The flow-past-a-cylinder results demonstrate the treatment of outlet boundary conditions and unstructured meshes, producing drag coefficients and Strouhal numbers in close agreement with published values \citep{kanaris2011three}. Finally, for the Taylor-Green vortex at $Re=1600$, the projection scheme approaches the pseudo-spectral DNS of \citet{wang2013high} under mesh refinement and agrees closely with it on the finer resolutions.

Comparisons with the monolithic VMS formulation reveal a consistent effect of the pressure fine scale, which is retained in the monolithic scheme but omitted from the proposed projection scheme. In the cylinder problem, the monolithic formulation overpredicts the drag coefficient by approximately $25\%$ relative to the literature values and the projection result on the same mesh; this discrepancy decreases under mesh refinement and when the pressure fine scale is suppressed. In the Taylor-Green vortex, the monolithic formulation exhibits greater modeled dissipation than the projection scheme, and this excess persists even on the finest mesh considered. Setting the pressure fine scale to zero in the monolithic formulation brings both its cylinder drag and Taylor-Green kinetic-energy predictions closer to the corresponding references, supporting the attribution of these differences to the associated grad--div-like term. This improvement comes with a tradeoff: suppressing the pressure fine scale increases the velocity-divergence error to approximately the level observed in the projection scheme. Finally, the projection formulation was also less expensive in the 3D Taylor-Green tests, reducing the average solution time per step by factors ranging from approximately $1.3$ on the $64^3$ mesh to $2.7$ on the $256^3$ mesh under identical solver and preconditioner settings.

Several directions remain open. A rigorous energy-stability analysis of the fully stabilized scheme remains to be completed; in that setting a skew-symmetric (Temam) convection form is expected to improve the stability behaviour in under-resolved regimes, at no added cost. The first-order pressure rate and the associated near-boundary accuracy loss are inherited from the non-rotational pressure-correction step and could be improved to $O(\dt^{3/2})$ with a rotational variant. The elliptic sub-solves are also naturally amenable to algebraic multigrid,
making the scheme an attractive backbone for large-scale, GPU-deployable solvers.

\section*{Acknowledgments}
This work was partly supported by the National Science Foundation under the grants NSF LEAP-HI 2053760, NSF  Advanced Simulation of Multiphase Electrochemical Systems (Award No. 2513871) and by the AI
Research Institutes program through NSF and USDA–NIFA under the AI Institute for Resilient Agriculture (Award No. 2021-67021-35329).

\bibliographystyle{plainnat}
\bibliography{references} 
\clearpage
\appendix

\section{Element metric quantities for the VMS parameters}
\label{app:vms-parameters}
Let $\boldsymbol{x}=\boldsymbol{x}(\boldsymbol{\xi})$ denote the map from the reference element, with coordinates $\boldsymbol{\xi}$, to a physical element, with coordinates $\boldsymbol{x}$. Define the inverse-Jacobian components by
\begin{align}
	K_{ij}=\frac{\partial\xi_i}{\partial x_j}.
\end{align}
The element metric tensor $\mathbf{G}$ and vector $\boldsymbol{g}$ used in \eqref{eq:vms-tau-overview} are computed as
\begin{subequations}
	\label{eq:vms-element-metrics}
	\begin{align}
		G_{ij} &= \sum_{k=1}^{n_{\mathrm{sd}}}K_{ki}K_{kj},
		\label{eq:vms-element-metric-G}\\
		g_i &= \sum_{j=1}^{n_{\mathrm{sd}}}K_{ji},
		\label{eq:vms-element-metric-g}
	\end{align}
\end{subequations}
where $n_{\mathrm{sd}}$ is the number of spatial dimensions. Consequently,
\begin{align}
	\mathbf{G}:\mathbf{G}
	&=\sum_{i,j=1}^{n_{\mathrm{sd}}}G_{ij}^2,
	&
	\boldsymbol{g}\cdot\boldsymbol{g}
	&=\sum_{i=1}^{n_{\mathrm{sd}}}g_i^2.
\end{align}
Thus, on each element, the implemented stabilization parameters are
\begin{align}
	\taum
	&=\left[
		\frac{4}{\dt^2}
		+\sum_{i,j=1}^{n_{\mathrm{sd}}}(u_h)_iG_{ij}(u_h)_j
		+\cinv\visco^2\sum_{i,j=1}^{n_{\mathrm{sd}}}G_{ij}^2
	\right]^{-1/2},
	&
	\tauc
	&=\left(\taum\sum_{i=1}^{n_{\mathrm{sd}}}g_i^2\right)^{-1}.
\end{align}
For the projection formulation, $(u_h)_i$ in the expression for $\taum$ is replaced by $(\widetilde{u}_h)_i$.

\end{document}